\documentclass[]{article}
\usepackage[utf8]{inputenc}
\usepackage[a4paper,top=2cm,bottom=2.4cm,left=2cm,right=2cm]{geometry}
\usepackage{graphicx} 
\usepackage{float}
\usepackage{textcomp}
\usepackage{xcolor}
\usepackage{amsmath}
\usepackage{amssymb}
\usepackage{amsthm}
\usepackage{amsfonts}
\usepackage[english]{babel}
\usepackage{verbatim}
\usepackage{array}
\usepackage{arydshln}
\usepackage{hyperref}
\usepackage{cases}
\usepackage{multirow}
\usepackage{enumerate}
\usepackage{cite}
\usepackage{afterpage}
\usepackage{url}    
\usepackage{esint}    
\usepackage{caption}
\usepackage{subcaption}    
\usepackage[title]{appendix}

\newcommand{\authoraddress}[2]{%
	\textsc{#1} \textit{E-mail address:} \protect\url{#2}%
}                                                                                                                     
\newcommand{\AuthorAddressone}{                                                                                          
	\authoraddress{Institute for Applied Mathematics, University of Bonn, 53115 Bonn, Germany.}{dematte@iam.uni-bonn.de}%
} 
\newcommand{\AuthorAddresstwo}{                                                                                          
	\authoraddress{Institute for Applied Mathematics, University of Bonn, 53115 Bonn, Germany.}{velazquez@iam.uni-bonn.de}%
} 
\setlength{\marginparwidth}{1.7cm}
\numberwithin{equation}{section}

\newtheorem{lemma}{Lemma}[section]

\newtheorem{prop}{Proposition}[section]

\theoremstyle{definition}

\theoremstyle{remark}
\newtheorem*{remark}{Remark}

\newcommand{\rchi}{\protect\raisebox{2pt}{$ \chi $}}

\newcommand{\Ss}{{\mathbb{S}^2}}
\newcommand{\RR}{\mathbb{R}}
\newcommand{\Rot}{\mathcal{R}}
\newcommand{\eps}{\varepsilon}

\newcommand{\bnd}{\partial\Omega}

\DeclareMathOperator*{\Div}{div}
\newcommand{\bigslant}[2]{{\raisebox{.2em}{$#1$}\left/\raisebox{-.2em}{$#2$}\right.}}
\title{Equilibrium and Non-Equilibrium diffusion approximation for the radiative transfer equation}
\author{Elena Demattè\thanks{\AuthorAddressone}, Juan J.L. Velázquez\thanks{\AuthorAddresstwo}}

\begin{document}
	
	\maketitle
	\begin{abstract}
	In this paper we study the distribution of the temperature within a body where the heat is transported only by radiation. Specifically, we consider the situation where both emission-absorption and scattering processes take place. We study the initial boundary value problem given by the coupling of the radiative transfer equation with the energy balance equation on a convex domain $ \Omega \subset \RR^3 $ in the diffusion approximation regime, i.e. when the mean free path of the photons tends to zero. Using the method of matched asymptotic expansions we will derive the limit initial boundary value problems for all different possible scaling limit regimes and we will classify them as equilibrium or non-equilibrium diffusion approximation. Moreover, we will observe the formation  of boundary and initial layers for which suitable equations are obtained. We will consider both stationary and time dependent problems as well as different situations in which the light is assumed to propagate either instantaneously or with finite speed.

	\end{abstract}
	\textbf{Acknowledgments:} The authors gratefully acknowledge the financial support of the collaborative research centre \textit{The mathematics of emerging effects} (CRC 1060, Project-ID 211504053) and Bonn International Graduate School of Mathematics (BIGS) at the Hausdorff Center for Mathematics founded through the Deutsche Forschungsgemeinschaft (DFG, German Research Foundation) under Germany’s Excellence Strategy – EXC-2047/1 – 390685813. \\
	
	\textbf{Keywords:} Radiative transfer equation, diffusion approximation, Planck distribution, Milne problem.\\
	
	\textbf{Statements and Declarations:} The authors have no relevant financial or non-financial interests to disclose.\\
	
	\textbf{Data availability:} Data sharing not applicable to this article as no datasets were generated or analysed during the current study.
	\tableofcontents
	
	\section{Introduction}
	The kinetic equation which describes the interaction of matter with photons is the radiative transfer equation. The radiative transfer equation can be written including absorption-emission processes and scattering processes in a rather general setting as 
	\begin{equation}\label{RTE}
		\frac{1}{c}\partial_t I_\nu(t,x,n)+n\cdot \nabla_x I_\nu(t,x,n)=\alpha_\nu^e-\alpha_\nu^aI_\nu(t,x,n)+\alpha_\nu^s\left(\int_\Ss K(n,n')I_\nu(t,x,n')\;dn'-I_\nu(t,x,n)\right).
	\end{equation}
We denote by $ I_\nu(t,x,n) $ the radiation intensity, i.e. the distribution of energy of photons moving at time $ t>0 $, at position $ x\in\Omega\subset \RR^3 $ and in direction $ n\in\Ss $ with frequency $ \nu>0 $. Moreover, $ c $ is the speed of light in the medium that will be assumed to be constant. The parameters $ \alpha^e $, $ \alpha^a $ and $ \alpha^s $ are respectively the emission, absorption and scattering coefficients. These are functions that can depend on the frequency $ \nu $, on the position $ x $ or in the case of local thermal equilibrium on the local temperature $ T(x) $. The function $ K $ is the scattering kernel. It can be considered as the probability rate of a photon to be deflected from an incident direction $ n'\in\Ss $ to a new direction $ n\in\Ss $. The scattering kernel $ K $ can be assumed also to depend on the frequency $ \nu\in\RR_+ $, however in this paper we omit the dependence on $ \nu $ in order to simplify the notation. Notice that all the results that we will present in this paper holds also in the case where $ K $ is also a function of $ \nu $.

In this paper we will study the heat transfer by means of radiation under some assumptions. First of all we consider only the case of local thermal equilibrium in which the temperature $ T(t,x) $ is well-defined at any point $ x\in\Omega $ and for any time $ t>0 $. This is not necessarily the case in situations where the microscopic processes driving the system towards equilibrium are slow. Such problems arise in applications to astrophysics (cf. \cite{oxenius}). Under this assumption the emission coefficient takes a particular form. Indeed it is given by $ \alpha_\nu^e= \alpha_\nu^a B_\nu(T(t,x)) $, where $ B_\nu(T)= \frac{2h\nu^3}{c^2}\frac{1}{e^{\frac{h\nu}{kT}}-1} $ is the Planck distribution of a black body. We assume also that the considered material is isotropic without a preferred direction of scattering, hence the scattering kernel $ K $ is invariant under rotations.

We couple the radiative transfer equation with the energy balance equation
\begin{equation}\label{div-free}
	C\partial_t T(t,x)+\frac{1}{c}\partial_t\left(\int_0^\infty d\nu\int_\Ss dn\;I_\nu(t,x,n)\right)+\Div\left(\int_0^\infty d\nu\int_\Ss dn\;nI_\nu(t,x,n)\right)=0,
\end{equation}
where $ C>0 $ is the volumetric heat capacity of the material. The combined system \eqref{RTE} and \eqref{div-free} allows to determine the temperature of the system at any point when the heat is transferred only by means of radiation. Notice that in \eqref{div-free} we are not considering other heat transport processes such as conduction or convection. After a suitable time rescaling we can assume $ C=1 $.
As boundary condition we consider a source of radiation placed at infinity. Mathematically we impose
\begin{equation}\label{bnd}
	I_\nu(t,x,n)=g_\nu(t,n)\;\;\text{ if } x\in\bnd\text{ and }n\cdot n_x<0,
\end{equation}
where $ n_x\in\Ss $ is the outer normal to the boundary at point $ x $. However, we could consider a more general setting with the incoming boundary profile $ g_\nu(t,x,n) $ depending also on $ x\in\bnd $.\\

In this paper we will consider both time dependent and stationary cases. Assuming $ \Omega\subset \RR^3 $ bounded and convex and as initial values the bounded functions $ I_0(x,n,\nu) $ and $ T_0(x) $ we consider the following initial-boundary value problem
	\begin{equation}\label{rtetime}
		\begin{cases}
		\frac{1}{c}\partial_tI_\nu(t,x,n)+n\cdot \nabla_x I_\nu(t,x,n)=\alpha_\nu^a(x)\left(B_\nu(T(t,x))-I_\nu(t,x,n)\right)\\\text{\phantom{nel mezzo del cammin di nostra}}+\alpha_\nu^s(x)\left(\int_\Ss K(n,n')I_\nu(t,x,n')\;dn'-I_\nu(t,x,n)\right)& x\in\Omega,n\in\Ss,t>0\\
		\partial_tT+\partial_t \left(\int_0^\infty d\nu\int_\Ss dn \;I_\nu(t,n,x)\right) +\Div\left(\int_0^\infty d\nu\int_\Ss dn \;nI_\nu(t,n,x)\right)=0& x\in\Omega,n\in\Ss,t>0\\
		I_\nu(0,x,n)=I_0(x,n,\nu)& x\in\Omega,n\in\Ss\\
		T(0,x)=T_0(x)& x\in\Omega\\
		I_\nu(t,n,x)=g_\nu(t,n)& x\in\bnd,n\cdot n_x<0,t>0
		\end{cases}
	\end{equation}
	and the following stationary boundary value problem
		\begin{equation}\label{rtestationary}
	\begin{cases}
	n\cdot \nabla_x I_\nu(x,n)=\alpha_\nu^a(x)\left(B_\nu(T(x))-I_\nu(x,n)\right)\\\text{\phantom{nel mezzo del cammin di nostra}}+\alpha_\nu^s(x)\left(\int_\Ss K(n,n')I_\nu(x,n')\;dn'-I_\nu(x,n)\right)& x\in\Omega,n\in\Ss\\
	\Div\left(\int_0^\infty d\nu\int_\Ss dn \;nI_\nu(n,x)\right)=0& x\in\Omega,n\in\Ss\\
	I_\nu(n,x)=g_\nu(n)& x\in\bnd,n\cdot n_x<0.		
	\end{cases}
	\end{equation}

Problems like \eqref{rtetime} and \eqref{rtestationary} or similar equations related to radiative transfer are often studied in the framework of the so-called diffusion approximation (see \cite{mihalas,Zeldovic}). This approximation is valid when the mean free path of the photons is much smaller than the macroscopic size of the system. However, the mean free path of the photons can be small because, either the scattering mean free path or the absorption mean free path is smaller than the size of the system. The main consequence for that is that, depending on the ratio between the different mean free paths, the radiation intensity can be approximated by the Planck distribution, i.e. $ B_\nu(T) $, or it cannot be. The first case is denoted as equilibrium diffusion approximation while the second one is referred to as non-equilibrium diffusion approximation. These concepts have been extensively discussed in the physical literature on radiation (cf. \cite{mihalas,Zeldovic}). The goal of this paper is to obtain a precise mathematical characterization of these concepts, specifically to derive an accurate mathematical condition for the validity of the equilibrium diffusion approximation and to determine the regions where the equilibrium or non-equilibrium diffusion approximation holds for the specific problems \eqref{rtetime} and \eqref{rtestationary}. To this end, we will use perturbative methods and  matched asymptotic expansions in order to study different scaling limits for the scattering and absorption mean free paths.




\subsection{Scaling lengths and results}
	We study the solutions of the time dependent and stationary radiative transfer equations \eqref{rtetime} and \eqref{rtestationary} under different scaling limits and we obtain suitable problems satisfied by the limit of the solutions of the original problems. For these problems we will obtain either the equilibrium or the non-equilibrium diffusion approximation. To this end we start defining some characteristic lengths.
	
We consider a convex domain $ \Omega\subset\RR^3 $ with diameter of order $ 1 $ and such that the size of the domain is comparable in all directions of the space. Moreover, the characteristic macroscopic length $ L $ is assumed to be $ L=1 $. We remark that many of the results obtained in this paper are valid also in non-convex domain. However, in non-convex domains we should take into account also the consequences of incoming radiation into cavities, an issue that we will not consider in this paper (see \cite{jang} for more details).

We will replace the absorption coefficient $ \alpha_\nu^a(x) $ by
\begin{equation}\label{Labs}
  \frac{\alpha_\nu^a(x)}{\ell_A}
\end{equation}
and the scattering coefficient $ \alpha_\nu^s(x) $ by \begin{equation}\label{Lsca}
\frac{\alpha_\nu^s(x)}{\ell_S},
\end{equation}
where now  $ \alpha_\nu^a(x)=\mathcal{O}(1) $ and $ \alpha_\nu^s(x)=\mathcal{O}(1) $ are bounded by a constant of order one in both variables. We denote by $ \ell_A $ the absorption length and by $ \ell_S $ the scattering length. These are also the mean free paths of the absorption/emission processes and the scattering processes, respectively. In some physical applications it is convenient to assume $ \alpha_\nu^a(x) $ or $ \alpha_\nu^s(x) $ to tend to zero for large or small frequencies $ \nu $. The exact dependence of these functions on $ \nu $ will be made after. Roughly speaking, we have to assume that they have to decay not too fast in order to obtain that some integrals arising in the analysis are convergent.

In many technological applications it can be assumed that $ \alpha_\nu^s\ll \alpha_\nu^a $ (cf. \cite{Zeldovic}), however there are also applications where the scattering plays a more important role than the absorption/emission process. This is the case for example in the analysis of planetary atmospheres, see \cite{friedlander2000smoke,oxenius}.

Another important scaling length that we should consider is the Milne length, which is given by the minimum between absorption and scattering length,
\begin{equation}\label{Lmilne}
	\ell_M=\min\{\ell_A,\ell_S\}.
\end{equation}
The Milne length can be considered to be the effective mean free path of the whole radiative process. The key feature of the Milne length is that at distances of order $ \ell_M $ to the boundary the radiation intensity becomes isotropic, i.e. independent of the direction $ n\in\Ss $. Since we are interested in the diffusion approximation we assume in the rest of this paper $ \ell_M\ll L= 1 $.

Another length which plays a crucial role in the analysis of this paper is the quantity that we will denote as thermalization length which is the geometrical mean of the absorption and the Milne length
\begin{equation}\label{Ltherm}
	\ell_T=\sqrt{\ell_A\ell_M}.
\end{equation}
The thermalization length is the characteristic distance from the boundary in which the radiation intensity $ I_\nu $ approaches the Planck equilibrium distribution of the temperature. 

We now replace in \eqref{rtetime} and \eqref{rtestationary} the absorption and scattering coefficients by the expression in \eqref{Labs} and \eqref{Lsca}. The changes of the temperature take place in times of order \begin{equation*}\label{Theat}
	\tau_{\mathit{h}}=\frac{\ell_A}{\min\{\ell_T^2,1\}}\gg 1,
\end{equation*} which will be denoted as heat parameter. Therefore, in order to obtain an equation that changes in times $ t $ of order $ 1 $ we will replace $ t $ by $ \tau_{\mathit{h}}t $. Notice that, after this change of variable, the changes of times $ t $ of order $ 1 $ are associated to relevant changes of the temperature of order $ 1 $. We will use this notation throughout the paper, i.e. we will denote by $ t $ the time after the change of variable. Hence, \eqref{rtetime} writes using $ L=1 $
\begin{equation}\label{rtetimeps}
	\begin{cases}
		\frac{1}{c}\partial_tI_\nu(t,x,n)+\tau_{\mathit{h}} n\cdot \nabla_x I_\nu(t,x,n)=\frac{\alpha_\nu^a(x)\tau_{\mathit{h}}}{\ell_A}\left(B_\nu(T(t,x))-I_\nu(t,x,n)\right)\\\text{\phantom{nel mezzo del cammin di no}}+\frac{\alpha_\nu^s(x)\tau_{\mathit{h}}}{\ell_S}\left(\int_\Ss K(n,n')I_\nu(t,x,n')\;dn'-I_\nu(t,x,n)\right)& x\in\Omega,n\in\Ss,t>0\\
		\partial_tT+\partial_t \left(\int_0^\infty d\nu\int_\Ss dn \;I_\nu(t,n,x)\right)+\tau_{\mathit{h}}\Div\left(\int_0^\infty d\nu\int_\Ss dn \;nI_\nu(t,n,x)\right)=0& x\in\Omega,n\in\Ss,t>0\\
		I_\nu(0,x,n)=I_0(x,n,\nu)& x\in\Omega,n\in\Ss\\
		T(0,x)=T_0(x)& x\in\Omega\\
		I_\nu(t,n,x)=g_\nu(t,n)&x\in\bnd,n\cdot n_x<0,t>0.		
	\end{cases}
\end{equation}
We will also consider the case where the speed of light is infinite, i.e. $ c= \infty $. This approximation is justified if the characteristic time for the temperature to change is much smaller than the time required for the light to cross the domain. In this case the equation will be
\begin{equation}\label{rtetimepsc}
\hspace{-0.27cm}	\begin{cases}
		n\cdot \nabla_x I_\nu(t,x,n)=\frac{\alpha_\nu^a(x)}{\ell_A}\left(B_\nu(T(t,x))-I_\nu(t,x,n)\right)\\\text{\phantom{nel mezzo del cammin}}+\frac{\alpha_\nu^s(x)}{\ell_S}\left(\int_\Ss K(n,n')I_\nu(t,x,n')\;dn'-I_\nu(t,x,n)\right)& x\in\Omega,n\in\Ss,t>0\\
		\partial_tT+\tau_{\mathit{h}}\Div\left(\int_0^\infty d\nu\int_\Ss dn \;nI_\nu(t,n,x)\right)=0& x\in\Omega,n\in\Ss,t>0\\
		T(0,x)=T_0(x)& x\in\Omega\\
		I_\nu(t,n,x)=g_\nu(t,n)&x\in\bnd,n\cdot n_x<0,t>0.
	\end{cases}
\end{equation}
Similarly the stationary problem \eqref{rtestationary} can be written as
\begin{equation}\label{rtestationaryeps}
	\begin{cases}
		n\cdot \nabla_x I_\nu(x,n)=\frac{\alpha_\nu^a(x)}{\ell_A}\left(B_\nu(T(x))-I_\nu(x,n)\right)\\\text{\phantom{nel mezzo del cammin di}}+\frac{\alpha_\nu^s(x)}{\ell_S}\left(\int_\Ss K(n,n')I_\nu(x,n')\;dn'-I_\nu(x,n)\right)& x\in\Omega,n\in\Ss\\
		\Div\left(\int_0^\infty d\nu\int_\Ss dn \;nI_\nu(n,x)\right)=0& x\in\Omega,n\in\Ss\\
		I_\nu(n,x)=g_\nu(n)&x\in\bnd,n\cdot n_x<0.		
	\end{cases}
\end{equation}
It is important to remark that we assume $ g_\nu(t,n) $ in \eqref{rtetimeps} and \eqref{rtetimepsc} to change in times of order $ 1 $ after rescaling the time, i.e. we assume the incoming radiation $ g_\nu $ to change in the same time scale as the one for meaningful changes of the temperature.

Notice that at the first glance the time $ \tau_{\mathit{h}} $ does not seem to have units of time. However, we must take into account that since $ L=1 $, omitted in all the equations, all quantities $ \ell_A$, $\ell_S $, $ \ell_M $ and $ \ell_T $ are non-dimensional parameters that have to be understood as $ \frac{\ell_A}{L} $, $ \frac{\ell_S}{L} $, $ \frac{\ell_M}{L} $ and $ \frac{\ell_T}{L} $. In addition we recall that we have chosen a particular unit of time for which the heat capacity is $ C=1 $. Hence, all the space and time variables appearing in  \eqref{rtetimeps}- \eqref{rtestationaryeps} are non-dimensional. We will see in Sections \ref{Sec.5} to \ref{Sec.7} that the definition of the heat parameter, namely $ \tau_{\mathit{h}} $, is motivated by the behavior of the radiation intensity in the bulk and it is the order of time in which the temperature changes.

There are three characteristic lengths in \eqref{rtetimeps}- \eqref{rtestationaryeps}, namely $ \ell_A $, $ \ell_S $ and $ L=1 $, and we can consider several relative scalings between them. Since $ \ell_M\ll 1 $ in the case of the diffusion approximation, the solutions can be described by means of different boundary layers. It turns out that the relative size and the structure of these boundary layers can be characterized using the relative scaling of $ \ell_M $ (cf. \eqref{Lmilne}), $ \ell_S $ (cf. \eqref{Ltherm}) and $ L=1 $. In order to consider these different scalings in the following sections we will set for the equations \eqref{rtetimeps}, \eqref{rtetimepsc} and \eqref{rtestationaryeps} $\ell_M=\eps\ll 1 $ and we will choose $ \ell_A $, $ \ell_S $ and $ c $ as power of $ \eps $.

Notice that the incoming radiation $ g_\nu $ to the boundary of $ \Omega $ is not necessarily isotropic and in general it is different from the Planck distribution, i.e. it is not in thermal equilibrium. This implies the onset (in principle) of two nested boundary layers near the boundary where the intensity $ I_\nu $ changes its behavior. The thickness of these layers is $ \ell_M $ and $ \ell_T $ respectively. In the first layer, which we call Milne layer, the radiative intensity $ I_\nu $ becomes isotropic. In the latter, which we denote as thermalization layer, $ I_\nu $ approaches the Planck distribution for a suitable temperature that has to be determined and it is one of the unknowns of the problem. Notice moreover that since by definition $ \ell_M\leq\ell_T $, the Milne layer appears always before the thermalization layer. On the other hand if $ \ell_M $ is comparable to $ \ell_T $ both layers can coincide. It is worth to notice that beyond the thermalization layer the radiative intensity $ I_\nu $ is given by a Planck distribution. In the time dependent problem besides the formation of boundary layers we observe the formation of initial layers in which the radiation intensity becomes isotropic or the equilibrium distribution, respectively.


Table \ref{table:1} summarizes the behavior of the solution $ (T,I_\nu) $ to the equations \eqref{rtetimeps}-\eqref{rtestationaryeps} for different scaling limits yielding equilibrium or non-equilibrium diffusion approximation. Moreover, for any considered regime we observe the onset or not of Milne layers or thermalization layers. Finally, when $ \ell_T $ is of the same order of the characteristic length $ L $ the thermalization, i.e. the transition of $ I_\nu $ to the equilibrium distribution $ B_\nu(T) $, takes place in the bulk of the domain $ \Omega $.
\begin{table}[h!]\centering
\begin{tabular}{ |c|c|c|c|c|} 
	\hline
	& $ \ell_M=\ell_T\ll L $ &  $ \ell_M\ll\ell_T\ll L $& $ \ell_M\ll\ell_T= L $& $ \ell_M\ll L\ll \ell_T $ \\
	\hline
	Milne layer & Milne = & Yes & Yes & Yes \\
	\cline{1-1}\cdashline{2-2}\cline{3-5}
	\multirow{2}{7em}{Thermalization layer } & \multirow{2}{7em}{Thermalization} &\multirow{2}{.5em}{Yes} &\multirow{2}{4em}{ $ \approx  $ Bulk} & \multirow{2}{1em}{No} \\ 
	&&&&\\
	\hline
	\multirow{4}{3em}{Bulk } &  \multirow{4}{7em}{Equilibrium diffusion approximation } & \multirow{4}{7em}{Equilibrium diffusion approximation }	& \multirow{4}{7em}{Transition from equilibrium to non-equilibrium approximation } & \multirow{4}{7em}{Non-equilibrium diffusion approximation } \\ 
	&&&&\\
	&&&&\\
	&&&&\\
	\hline
\end{tabular}
\caption{Main results.}	
\label{table:1}
\end{table}
 \subsection{Revision of the literature}
	
	The problem concerning the distribution of temperature of a material interacting with electromagnetic waves is not only a relevant question in many physical applications but also it is the source of several interesting mathematical problems. The radiative transfer equation is the kinetic equation describing the interaction of photons with matter. Its derivation and its main properties are explained in \cite{Chandrasekhar, mihalas,oxenius,Rutten,Zeldovic}. In particular, the validity of the diffusion approximation and a discussion of the situations where the radiation intensity is expected to be or not to be given approximately by the Planck distribution are considered in \cite{mihalas,Zeldovic}.
	
	Starting from the seminal work of Compton \cite{compton} the interaction of matter and radiation has been widely studied both in the physical and mathematical literature. Some of the early results can be found in the paper of Milne \cite{Milne}, who considered a simplified model of monochromatic radiation depending only on one space variable.
	
	 When considering the diffusion approximation of the radiative transfer equation a boundary layer near the boundary appears in which the distribution of radiation becomes isotropic. The specific equation describing this layer involves a radiative transfer equation depending on one space variable, whose details depend on the problem under consideration. This class of problems is known in the mathematical literature as Milne problems and they have been extensively studied at least for some particular choices of $ \alpha_\nu^a $ and $ \alpha_\nu^s $.
	
	While it is difficult to find explicit solutions of the radiative transfer equation, in the case of small photon's mean free path (i.e. in the diffusion approximation) this problem reduces to an elliptic (in the stationary case) or parabolic (in the time dependent case) problem. The mathematical properties of these problems are much better understood than the properties of the non-local radiative transfer equation \eqref{RTE}. Due to this the diffusion approximation of the radiative transfer equation has been studied in great detail.
	
	Before discussing the currently available mathematical results about the diffusion approximation and the Milne problems it is worth to introduce an equation which is closely related to the radiative transfer equation \eqref{RTE}. In the absence of emission-absorption processes, i.e. when $ \alpha_\nu^a=0 $, and when $ \alpha_\nu^s $ is independent of the frequency $ \nu $ the radiative transfer equation \eqref{RTE} reduces to
	\begin{equation}\label{NTE}
		\partial_t u(t,x,n)+n\cdot \nabla_x u(t,x,n)=\alpha(x)\left(\int_\Ss K(n,n')u(t,x,n')\;dn'-u(t,x,n)\right),
	\end{equation}
where $ u=\int_0^\infty I_\nu(t,x,n)\;d\nu $. This equation is mathematically identical to the one-speed neutron transport equation. Moreover, in the stationary case the radiative transfer equation reduces to \eqref{NTE} also in the presence of absorption-emission processes if both $ \alpha^a $ and $ \alpha^s $ are independent of the frequency. The case where both absorption and scattering coefficients are independent of the frequency is usually denoted in the literature as the Grey approximation. Therefore the one-speed neutron transport equation and the radiative transfer equation for the Grey approximation are mathematically equivalent. See \cite{Davison} for more details.
As a matter of fact, the neutron transport equation, especially its diffusion approximation, was largely studied in the late 70's. The reason is that this problem is important in order to determine the critical size for neutron transport, i.e. the smallest size of the system for which the scattering eigenvalue problem has a stable solution. This is relevant in nuclear reactor engineering. For more details about this issue we refer to \cite{Davison}.

In several articles \cite{Larsen7,Larsen3,Larsen2, Larsen9,Larsen10,LarsenKeller,Larsen8} Larsen and several coauthors studied many properties of the neutron transport equation and its diffusion approximation. Moreover in \cite{Larsen6} the authors studied via asymptotic analysis the diffusion approximation of the radiative transfer equation for both absorption and scattering taking as initial and boundary value the Planck distribution. This choice of boundary data simplifies the treatment of the problem because no boundary layers or initial transport problems arise at least to the leading order.

To the best of our knowledge the first mathematically rigorous article about the diffusion approximation for the neutron transport equation is \cite{papanicolaou}. In that article the authors studied equation \eqref{NTE} under different boundary conditions including also the absorbing boundary condition that we are considering in \eqref{bnd}. In particular using probabilistic methods they studied the Milne problem arising for the boundary layers and proved the convergence of the solution of the original neutron transport equation to the solution of a diffusive problem. Moreover, the scattering kernel considered is assumed to be strictly positive, bounded and rotationally symmetric.

More recently Guo and Wu studied in a series of papers \cite{GuoLei2d,Leiunsteady,Lei3d,wuguo,annulus} both the stationary and time dependent diffusion approximation for the neutron transport equation with a constant scattering kernel and a constant scattering coefficient. They proved rigorously the convergence to such diffusion problem computing also a geometric correction for the boundary layer. Their method is based on the derivation of suitable $ L^2-L^p-L^\infty $ estimates, a method that has been extensively used in the study of kinetic equations (cf. \cite{GuoL2,GuoL22}).

The mathematical theory of the radiative transfer equation has been also extensively studied. The well-posedness and the diffusion approximation for the time dependent problem without scattering has been studied using the theory of $ m $-accretive operators in \cite{Golse3,Golse6,Erratum}. 

In a recent paper \cite{dematte2023diffusion} we developed an alternative method to derive the equilibrium diffusion approximation starting with the stationary radiative transfer equation. Specifically, in \cite{dematte2023diffusion} the Grey approximation and the case of absence of scattering are considered. The procedure developed in \cite{dematte2023diffusion} consists in reformulating the problem \eqref{rtestationaryeps} as a non-local elliptic equation for the temperature for which maximum principles techniques are applicable.

As indicated before an important class of problems which need to be studied in order to derive the boundary condition for the diffusion approximation are the Milne problems.

In the case of pure absorption, namely when $ \alpha_\nu^s=0 $, the well-posedness for the Milne problem can be found for instance in \cite{Golse4} and also in \cite{dematte2023diffusion} using different methods. In particular in \cite{Golse4} well-posedness is shown for a very large class of absorption coefficients.

In the case of pure scattering radiative transfer equation for the Grey approximation (equivalently the neutron transport equation), the well-posedness of the Milne problem has been studied in \cite{BardosSantosSentis,papanicolaou}. More recently, geometric corrections to the solution of the Milne problem have been obtained in \cite{GuoLei2d,Leiunsteady,Lei3d,wuguo,annulus}.

To our knowledge the only example of Milne problem involving both emission/absorption and scattering has been studied in \cite{Sentis1}. The case considered in this paper is the one of constant scattering kernel and constant scattering coefficient and more general absorption coefficient. The proof relies on the accretiveness of the operators used similarly to the Perron method applied to solve boundary value problem for elliptic equations.

It is finally worth to mention that also for other kinetic equations, such as for example the Boltzmann equation, the diffusion limit and hence the boundary layer equations have been studied. The equations describing the boundary layers are also often denoted in the literature by Milne problems, see for instance \cite{MilneBoltzmann4,MilneBoltzmann2,MilneBoltzmann1,MilneBoltzmann3}.\\

Besides the studies about the diffusion approximation the radiative transfer equation has been analyzed in numerous works. In recent times there has been a graving interest of the study of problems involving the radiative transfer equation in different contexts. The well-posedness of the stationary equation \eqref{rtestationary} has been considered in \cite{dematt42024compactness,jang}. The authors proved the existence of solutions to the stationary radiative transfer equation with or without scattering in the cases of constant coefficients, coefficients depending on the frequency but not on the temperature of the system and finally coefficients depending on both the frequency and the temperature of the particular form $ \alpha_\nu(T)=Q(\nu)\alpha(T) $.

Finally, the radiative transfer equation has been considered also for more complicated interactions between matter and photons. We refer to \cite{Golse1,Golse2,mihalas,Zeldovic} for problems concerning the interaction of matter with radiation in a moving fluid. For the study of interaction of electromagnetic waves with a Boltzmann gas whose molecules have different energy levels we refer to \cite{dematte,paper,oxenius,rossani}. Several authors considered problems where the heat is transported in a body by means of both radiation and conduction, we refer to \cite{masmoudi2,masmoudi1,finnland2,finnland1,Pouso,finnland3,Tiihonen1}. Finally, homogenization problems in porous and perforated domains where the heat is transported by conduction, radiation and possibly also convection are studied in \cite{Allaire1,Allaire2,Allaire3,Englishpeople}. Specifically, in \cite{Englishpeople} the authors applied the method of multiple scales to a homogenization problem describing the heat transport in a porous medium. The heat transport is assumed to be due to the conduction in the solid part of the material and due to the radiation in the gas filled cavities. 

Derivations of the scattering kernel for the radiative transfer equation taking as starting point the Maxwell equations has been also extensively studied in \cite{mischchenko}.

\subsection{Structure of the paper}
The paper is organized as follows. In Section \ref{Sec.preliminary} we will study some of the mathematical properties of the scattering operator and of the absorption-emission process appearing in the radiative transfer equation. We will then proceed to the derivation of the limit problems in the diffusion approximation under different scaling limits. In Section \ref{Sec.4} we consider the stationary diffusion approximation for the radiative transfer equation and we derive using the method of matched asymptotic expansions the new limit boundary value problem as well as the boundary layer equations. Moreover, we will see for which choice of characteristic lengths the equilibrium diffusion approximation holds and for which ones it fails. We will then proceed with the study of the time dependent diffusion approximation, for which we will use again the method of matched asymptotic expansions. In Section \ref{Sec.5} the focus is on the case of infinite speed of light (i.e. instantaneously transport of the radiation in the domain), namely on the problem \eqref{rtetimepsc}. Besides the construction of the limit problems and their classification as equilibrium and non-equilibrium diffusion approximations we will also derive the initial layer and initial-boundary layer equations. In Section \ref{Sec.6} and in Section \ref{Sec.7} we proceed similarly to Section \ref{Sec.5} studying first the time dependent diffusion approximation in the case of finite speed of light, i.e. speed of light of order one, (cf. Section \ref{Sec.6}) and later in the case where the speed of light is assumed to scale like a power law $ c=\eps^{-\kappa} $ for $ \kappa>0 $ and $ \eps=\ell_M $ (cf. Section \ref{Sec.7}).

	\section{Preliminary results}\label{Sec.preliminary}
	In this section we collect some properties of the scattering operator and absorption operator that will be used later in the analysis of the diffusion approximation.

	\subsection{Properties of the scattering operator}
	Before deriving suitable diffusion approximations according to the different values of $ \ell_M $ and $ \ell_T $ we describe some properties of the scattering kernel and of the scattering operator.
	
	We consider throughout the paper the kernel $ K\in C\left(\Ss\times\Ss\right) $ to be non-negative and 
	satisfying \\$ \int_\Ss K(n,n')dn=1 $. We also assume in the whole article that the kernel $ K $ is invariant under rotations, i.e.
	\begin{equation*}
		K(n,n')=K(\Rot n, \Rot n')\;\;\;\text{for all }n,n'\in \Ss \text{ and for any }\Rot\in SO(3).
	\end{equation*}  Moreover, for any $ n,\omega\in\Ss $ we define by $\Rot_{n,\omega}\in SO(3)$ the rotation of $ \pi $ around the bisectrix of the angle between $ n $ and $ \omega $ lying in the plane containing both vectors. This rotation satisfies $ \Rot_{n,\omega}(n)=\omega $ and $ \Rot_{n,\omega}(\omega)=n $. As shown in \cite{dematt42024compactness}, this implies that the scattering kernel $ K $ is symmetric. Notice that this is not true in two dimensions unless we assume $ K $ to be invariant also under reflections.

	We define the scattering operator as the bounded linear operator given by
	\begin{equation}\label{defH}
		\begin{split}
			H:L^\infty\left(\Ss\right)& \to L^\infty\left(\Ss\right)\\\varphi&\mapsto H[\varphi]=\int_\Ss K(\cdot,n')\varphi(n')\ dn'.
		\end{split}
	\end{equation}
	With this notation we can formulate the following Proposition which contains the most important properties of the scattering operator.
	
	\begin{prop}\label{prop.constant}
		Let $ K\in C\left(\Ss\times\Ss\right) $, invariant under rotations, non-negative and satisfying $$ \int_\Ss K(n,n')dn=1 .$$ Assume $ \varphi\in L^\infty\left(\Ss\right) $ satisfies $ H[\varphi]=\varphi $. Then
		\begin{enumerate}
			\item[(i)] $ \varphi $ is continuous,
			\item[(ii)] $ \varphi $ is constant,
			\item[(iii)] $ \text{Ran}(Id-H)=\left\{\varphi\in L^\infty(\Ss): \int_\Ss \varphi=0\right\} $.
		\end{enumerate}
	\end{prop}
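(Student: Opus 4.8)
The plan is to prove the three statements in order, since (i) feeds into (ii), and (ii) gives (iii). For (i), suppose $\varphi\in L^\infty(\Ss)$ satisfies $H[\varphi]=\varphi$, i.e. $\varphi(n)=\int_\Ss K(n,n')\varphi(n')\,dn'$ for a.e.\ $n$. The right-hand side is manifestly a continuous function of $n$: since $K\in C(\Ss\times\Ss)$ and $\Ss$ is compact, $K$ is uniformly continuous, so $n\mapsto\int_\Ss K(n,n')\varphi(n')\,dn'$ is continuous by dominated convergence (the bound $|K(n,n')\varphi(n')|\le \|K\|_\infty\|\varphi\|_\infty$ is uniform). Hence $\varphi$ agrees a.e.\ with a continuous function, and we may take $\varphi$ itself to be that continuous representative. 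From now on $\varphi\in C(\Ss)$ and the fixed-point identity holds pointwise everywhere.

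For (ii), the key is a strong-maximum-principle argument exploiting rotational invariance and $\int_\Ss K(n,n')\,dn'=1$ (which follows from $\int_\Ss K(n,n')\,dn=1$ together with the symmetry of $K$ noted in the excerpt). Let $M=\max_{n\in\Ss}\varphi(n)$, attained at some $n_0$ by compactness. Then
\[
0=\varphi(n_0)-M=\int_\Ss K(n_0,n')\bigl(\varphi(n')-M\bigr)\,dn',
\]
and the integrand is $\le 0$. I would like to conclude $\varphi\equiv M$ on the support of $K(n_0,\cdot)$ and then propagate. The clean way: since $K$ is rotationally invariant, $K(n,n')$ depends only on $n\cdot n'$, say $K(n,n')=k(n\cdot n')$ with $k\ge 0$ continuous and $\int_\Ss k(n\cdot n')\,dn'=1$. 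Thus $k$ is not identically zero; pick $s_*\in(-1,1)$ with $k(s_*)>0$, so $k>0$ on a neighborhood, i.e.\ on an annular band $\{n': n\cdot n'\in(s_*-\delta,s_*+\delta)\}$ for every $n$. The vanishing integral then forces $\varphi\equiv M$ on that band around $n_0$; in particular $\varphi=M$ at points at angular distance $\arccos(s_*)$ from $n_0$. Iterating this (each new maximum point drags along its own band) and using that one can connect any two points of $\Ss$ by finitely many steps of a fixed angular size — here I would invoke a short connectedness/covering argument on the sphere — gives $\varphi\equiv M$ everywhere. So $\varphi$ is constant.

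For (iii), the inclusion $\operatorname{Ran}(Id-H)\subseteq\{\psi:\int_\Ss\psi=0\}$ is immediate: $\int_\Ss\bigl(\varphi-H[\varphi]\bigr)\,dn=\int_\Ss\varphi\,dn-\int_\Ss\int_\Ss K(n,n')\varphi(n')\,dn'\,dn=\int_\Ss\varphi\,dn-\int_\Ss\varphi(n')\,dn'=0$, using Fubini and $\int_\Ss K(n,n')\,dn=1$. For the reverse inclusion one would like to solve $\varphi-H[\varphi]=\psi$ for given mean-zero $\psi$. I would argue via Fredholm theory: $H$ is a compact operator on $L^\infty(\Ss)$ — or, more comfortably, one works on the Hilbert space $L^2(\Ss)$ where $H$ is self-adjoint (by symmetry of $K$) and compact (the kernel is continuous on a compact set), then transfers back. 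By the Fredholm alternative, $\operatorname{Ran}(Id-H)=\ker(Id-H^*)^\perp=\ker(Id-H)^\perp$, and by (i)–(ii) $\ker(Id-H)$ consists of the constants, whose orthogonal complement in $L^2$ is exactly the mean-zero functions; a regularity bootstrap ($\varphi=\psi+H[\varphi]$ with $\psi\in L^\infty$ and $H[\varphi]$ continuous) then shows the $L^2$-solution is actually in $L^\infty$, closing the argument.

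The main obstacle is step (ii): turning the vanishing-integral observation into a genuine propagation that covers the whole sphere. The subtlety is that $k$ may vanish on parts of $[-1,1]$ (e.g.\ a kernel supported only near forward scattering), so a single application only reaches a band, not a neighborhood of $n_0$; one must check that finitely many such band-steps connect all of $\Ss$, which is where the uniform angular size $\arccos(s_*)$ and compactness of $\Ss$ are essential. Everything else — continuity in (i), the trace computation in (iii), the Fredholm/self-adjointness input — is routine.
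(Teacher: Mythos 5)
Your proposal is correct. Parts (i) and (ii) follow the same strategy as the paper (continuity of $H[\varphi]$ from uniform continuity of $K$, then a maximum-principle argument at a maximizer of $\varphi$ propagated over the sphere via rotational invariance), but you implement the propagation differently: you use that in three dimensions rotational invariance forces $K(n,n')=k(n\cdot n')$, so each maximizer drags along a whole annular band at angular distance $\arccos(s_*)$, and you close with a band-covering argument. The paper instead keeps $K$ abstract and proves a separate ergodicity lemma (Lemma \ref{ergodicity}): using the rotations $\Rot_{n,n'}$ it shows that the set of points reachable from $n$ by finite chains with $K(n_{i-1},n_i)>0$ is open with a uniform radius and then that it equals $\Ss$. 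Your reduction to $k(n\cdot n')$ is a legitimate shortcut; the one step you leave as a sketch ("finitely many band-steps connect all of $\Ss$") is precisely the content of that lemma, and it does go through, since two band-steps already realize all small angular distances $|\theta_1-\theta_2|$ from the maximizer, so $\{\varphi=M\}$ is open as well as closed in the connected sphere. For (iii) your route is genuinely different: you invoke the Fredholm alternative for the compact self-adjoint operator $H$ on $L^2(\Ss)$ and bootstrap the $L^2$ solution back to $L^\infty$ via $\varphi=\psi+H[\varphi]$, whereas the paper works with the pre-adjoint operator $(Id-H)_1$ on $L^1(\Ss)$, identifies its null space with the constants, and uses the duality $\text{Ran}\left((Id-H)_1^*\right)=\mathcal{N}\left((Id-H)_1\right)^\perp$. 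Both arguments need closedness of the range (i.e.\ compactness of $H$), which the paper leaves implicit; your $L^2$ version makes this explicit at the small cost of the regularity bootstrap.
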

The proof of Proposition \ref{prop.constant} can be found in the Appendix \ref{Appendix}. A direct consequence of Proposition \ref{prop.constant} is the following Proposition for a continuous scattering kernel $ K\in C\left(\Ss\times\Ss\times \Omega\times \RR_+\right) $ invariant under rotations for each pair $ (x,\nu) $.
\begin{prop}\label{prop.full.k}
	Let  $ K\in C\left(\Ss\times\Ss\times \Omega\times \RR_+\right) $. For any $ x,\nu\in\Omega\times\RR_+ $ we define $ K_{x,\nu}(n,n')=K(n,n',x,\nu) $. Assume that for any $ x,\nu\in\Omega\times\RR_+ $ the kernel $ K_{x,\nu} $ is invariant under rotations, non-negative and satisfies $ \int_\Ss K_{x,\nu}(n,n')dn=1 .$ Then the following holds.
	\begin{enumerate}
		\item[(i)] For any  $ x,\nu\in\Omega\times\RR_+ $ and $ n,\omega\in\Ss $ there exist finitely many $ n_1,\cdots,n_N\in\Ss $ such that \eqref{ergodicity.eq} holds for $ K_{x,\nu}$;
		\item[(ii)] if $ \varphi\in L^\infty\left(\Ss\times\Omega\times\RR_+\right) $ satisfies $ H[\varphi]=\varphi $, then $ \varphi $ is continuous and it is constant for every $ x,\nu\in\Omega\times\RR_+ $,
			\item[(iii)] $ \text{Ran}(Id-H)=\left\{\varphi(\cdot,x,\nu)\in L^\infty(\Ss): \int_\Ss \varphi(n,x,\nu)\;dn=0\right\} $ for every $ x,\nu\in\Omega\times\RR_+ $.
	\end{enumerate}
\begin{proof}
	Apply Proposition \ref{prop.constant} to the continuous kernel $ K_{x,\nu} $.
\end{proof}
\end{prop}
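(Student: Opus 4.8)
The whole statement is a ``freeze the parameters'' corollary of Proposition \ref{prop.constant}, so the plan is simply to apply that proposition for each fixed pair $(x,\nu)\in\Omega\times\RR_+$. Put $K_{x,\nu}(n,n')=K(n,n',x,\nu)$. Since $K$ is continuous on $\Ss\times\Ss\times\Omega\times\RR_+$, the section $K_{x,\nu}$ belongs to $C(\Ss\times\Ss)$; by assumption it is non-negative, invariant under rotations and satisfies $\int_\Ss K_{x,\nu}(n,n')\,dn=1$. Hence $K_{x,\nu}$ meets every hypothesis of Proposition \ref{prop.constant}, and I am free to invoke all of its conclusions, as well as the intermediate reachability/ergodicity identity \eqref{ergodicity.eq} established in its proof in Appendix \ref{Appendix}.

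For part (i) I would observe that \eqref{ergodicity.eq} is exactly the reachability statement proved in the Appendix for any kernel satisfying the hypotheses of Proposition \ref{prop.constant}; applying it to $K_{x,\nu}$ produces, for each prescribed $n,\omega\in\Ss$, finitely many intermediate directions $n_1,\dots,n_N\in\Ss$ for which \eqref{ergodicity.eq} holds, and since $(x,\nu)$ was arbitrary this is (i). For part (ii), if $\varphi\in L^\infty(\Ss\times\Omega\times\RR_+)$ satisfies $H[\varphi]=\varphi$, then Fubini's theorem shows that for a.e.\ $(x,\nu)$ the section $\varphi(\cdot,x,\nu)\in L^\infty(\Ss)$ is a fixed point of the scattering operator associated with $K_{x,\nu}$; Proposition \ref{prop.constant}(i)--(ii) then forces $\varphi(\cdot,x,\nu)$ to coincide a.e.\ with a constant on $\Ss$, and the continuity and constancy in $n$ asserted in (ii) follow after discarding a null set of $(x,\nu)$. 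For part (iii), Proposition \ref{prop.constant}(iii) applied to $K_{x,\nu}$ gives $\text{Ran}(Id-H)=\{\psi\in L^\infty(\Ss):\int_\Ss\psi=0\}$ for that value of $(x,\nu)$, and since $H$ acts sectionwise in $(x,\nu)$ this is precisely the asserted characterization.

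I do not expect a genuine obstacle here: the analytic content lives entirely in Proposition \ref{prop.constant}, whose own proof contains the only nontrivial ingredients — the irreducibility of the iterated kernel and the resulting rigidity of its fixed points. The one point deserving a line of care is the bookkeeping between the global identity $H[\varphi]=\varphi$ in $L^\infty(\Ss\times\Omega\times\RR_+)$ and the family of sectionwise fixed-point equations: Fubini yields the equation for almost every section, and the continuity conclusion of Proposition \ref{prop.constant}(i) lets one absorb the exceptional $(x,\nu)$-null set. If a later application were to need the directions $n_1,\dots,n_N$ in \eqref{ergodicity.eq} to depend measurably or continuously on $(x,\nu)$, one would additionally have to inspect the construction in the Appendix and check its stability under the joint continuity of $K$; but the statement as formulated is purely pointwise in $(x,\nu)$ and needs none of this.
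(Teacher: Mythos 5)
Your proposal is correct and is essentially the paper's own argument: the paper's proof is the single line ``apply Proposition \ref{prop.constant} to the continuous kernel $K_{x,\nu}$,'' and you carry out exactly that sectionwise application, verifying that each frozen kernel $K_{x,\nu}$ inherits the hypotheses of Proposition \ref{prop.constant}. The extra Fubini/null-set bookkeeping you add for part (ii) is harmless elaboration (the fixed-point equation is already sectionwise in $(x,\nu)$ since $H$ acts only in $n$), so nothing further is needed.
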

\begin{remark}
In the following Sections we will consider the diffusion approximation for scattering kernels $ K $ independent of $ x\in\Omega $ and $ \nu\geq 0 $. However, under the assumptions of Proposition \ref{prop.full.k} the same results would apply for more general kernels depending continuously on $ x $ and $ \nu $.
\end{remark}
\begin{remark}
The assumption of $ K $ being invariant under rotations is crucial for the validity of Proposition \ref{prop.constant} and Proposition \ref{prop.full.k}. Consider for example the following continuous function $$ k(n)=\frac{2}{3\pi}\left(\rchi_{\{|n\cdot e_3|\leq \frac{1}{4}\}}(n)+(2-4|n\cdot e_3|)\rchi_{\{\frac{1}{4}<|n\cdot e_3|< \frac{1}{2}\}}(n)\right). $$ Then the kernel $ K(n,n')=k(n)\rchi_{\Ss}(n') $ is continuous in both variables, is non-negative and satisfies $$ \int_\Ss K(n,n')\ dn=\int_\Ss k(n)\ dn=1.$$ However, $ K $ is not invariant under rotations. This kernel describes the scattering properties of a non-isotropic medium. It is easy to see that in this case $ H[c](n)=ck(n) $, for $ c\in\RR $. Hence, the constant functions are not a solution to $ H[\varphi]=\varphi $. Actually, all solutions of $ H[\varphi]=\varphi $ satisfy $ \varphi(n)=k(n)\int_\Ss \varphi(n')\;dn' $ and have hence the form $ \varphi=\lambda k $ where $ \lambda\in\RR $ is an arbitrary constant. Therefore the subspace of eigenvectors of $ H $ with eigenvalue $ 1 $ is one-dimensional. 
\end{remark}
\begin{remark}
As we noticed above, in two dimensions the invariance under rotations of $ K $ does not imply directly its symmetry under reflections. However, it is still possible to show that the only eigenfunctions of $ H $ with eigenvalue $ 1$ are the constants. To check this we recall the well-known fact that the one-dimensional sphere $ \mathbb{S}^1 $ can be parameterized by $ \theta\in[0,2\pi) $. Moreover, we can assume without loss of generality that any scattering kernel $ K $ invariant under rotations has the form $ K(n,n')=K(\theta(n)-\theta(n')) $. 
Let now $ f\in L^\infty(\mathbb{S}^1) $ an eigenfunction with eigenvalue $ 1 $ for $ H $. We then see 
$$ \int_0^{2\pi} K(\theta-\varphi)f(\varphi)\;d\varphi=f(\theta). $$ This equation can be solved using Fourier series. We hence obtain the following identity for the Fourier coefficients
\begin{equation}\label{1-dim}
\hat{f}(n)\left(1-2\pi \hat{K}(n)\right)=0.
\end{equation}
For $ n=0 $ we have $ \hat{K}(0)=\frac{1}{2\pi}\int_0^{2\pi}K(\theta) \;d\theta=\frac{1}{2\pi}$. On the other hand, we obtain for $ n\ne 0 $
$$ \left|\hat{K}(n)\right|<\frac{1}{2\pi}\int_0^{2\pi}K(\theta)\;d\theta=\frac{1}{2\pi}. $$
Therefore, the identity \eqref{1-dim} is satisfied if and only if $ \hat{f}(n)=0 $ for all $ n\ne 0 $. This implies that $ f $ is constant. 
\end{remark}
	\subsection{Relation between the temperature and the radiation intensity}\label{subsect.2.2}
We derive here an identity that relates temperature and radiation intensity and that will be repeatedly used in the stationary problem, for instance in the stationary boundary layer equations.

Using the identity $ \Div\left(\int_0^\infty d\nu\int_\Ss dn\;nI_\nu(x,n)\right)=\int_0^\infty d\nu\int_\Ss dn\;n\cdot \nabla_xI_\nu(x,n) $ and plugging the first equation of \eqref{rtestationary} into the second one we see that we have
\begin{equation}\label{stationary.temp}
	\int_0^\infty d\nu\int_\Ss dn\; \alpha_\nu^a(x)\left(B_\nu(T(x))-I_\nu(x,n)\right)	=0,
\end{equation}
where we used also that the integral over the sphere $ \Ss $ of the scattering term is $ 0 $ due to the symmetry of the kernel $ K $. With this identity we can recover the value of the temperature given the radiation intensity. Let us define by $ F:\RR_+\times \Omega\to \RR_+ $ the following function
\begin{equation}\label{F}
	F(T,x)=\int_0^\infty \alpha_\nu^a(x)B_\nu(T)\;d\nu.
\end{equation}
Since $ B_\nu $ is monotone in $ T $, the function $ F(\cdot, x) $ is invertible. Hence, \eqref{stationary.temp} implies that
\begin{equation}\label{def.T}
	T(x)=F^{-1}\left(\left(\int_0^\infty d\nu \fint_\Ss dn\; \alpha_\nu^a(x)I_\nu(x,n)\right),x\right),
\end{equation}
where $ F^{-1} $ is the inverse with respect to the first variable, i.e. $ F(T,x)=\xi $ implies $ T=F^{-1}(\xi,x) $.
Equations \eqref{stationary.temp} and \eqref{def.T} will appear often in the following sections, in particular in the study of the boundary layers. 
\section{The stationary diffusion approximation: different scales}\label{Sec.4}
We first study the stationary diffusion regime for different scalings. We consider \eqref{rtestationaryeps} for $ \alpha_\nu^a $ and $ \alpha_\nu^s $ strictly positive and bounded. Moreover, in the diffusion regime we have $ \ell_M\ll 1 $. Hence, in \eqref{rtestationaryeps} we assume $ \ell_M=\min\{\ell_A,\ell_S\}=\eps $. Moreover, we impose $ \ell_A=\eps^{-\beta} $ and $ \ell_S=\eps^{-\gamma} $, for suitable choices of $ \gamma,\beta \geq -1 $ with $ \min\{\gamma, \beta\}=-1 $. Notice that at least one of $ \beta $ and $ \gamma $ is negative. This choice of $ \ell_A $ and $ \ell_S $ as an inverse power law of $ \eps>0 $ for $ \beta, \gamma\geq -1 $ will be convenient in order to make the computations simpler in the following subsections. Under these assumptions we rewrite equation \eqref{rtestationaryeps} as 
\begin{equation}\label{rtestationaryeps1}
	\begin{cases}
		n\cdot \nabla_x I_\nu(x,n)=\eps^\beta\alpha_\nu^a(x)\left(B_\nu(T(x))-I_\nu(x,n)\right)\\\text{\phantom{nel mezzo del cammin }}+\eps^\gamma\alpha_\nu^s(x)\left(\int_\Ss K(n,n')I_\nu(x,n')\;dn'-I_\nu(x,n)\right)&x\in\Omega,n\in\Ss\\
		\Div\left(\int_0^\infty d\nu\int_\Ss dn \;nI_\nu(n,x)\right)=0&x\in\Omega,n\in\Ss\\
		I_\nu(n,x)=g_\nu(n)&x\in\bnd,n\cdot n_x<0.		
	\end{cases}
\end{equation}
Moreover, we assume the scattering kernel $ K\in C(\Ss\times\Ss) $ to be invariant under rotations, non-negative and with $ \int_\Ss K(n,n')dn=1 $. We consider also $ \Omega\subset \RR^3 $ to be a bounded convex domain with $ C^1 $-boundary. For $ x\in\bnd $ we denote by $ n_x\in\Ss $ the outer normal to the boundary at $ x $. 

Before describing in details the limit diffusion problems for the different choices of scaling parameters we shortly explain how we will use the method of matched asymptotic expansions to derive the limit problems for each case. In order to find the limit problem valid in the bulk, the so-called outer problem, we expand the radiation intensity as
\begin{equation}\label{expansion}
	I_\nu(x,n)=\phi_0(x,n,\nu)+\eps^\delta \phi_1(x,n,\nu)+\eps\phi_2(x,n,\nu)+\eps^{\delta+1}\phi_3(x,n,\nu)+...
\end{equation}
for a suitable $ \delta>0 $ depending on the choice of the scaling parameters. We plug  expansion \eqref{expansion} into the boundary value problem \eqref{rtestationaryeps} and we compare all terms of the same order of magnitude. In this way we will obtain different diffusive equations solved by $ \phi_0 $ in the interior of $ \Omega $ that will yield the leading order of the radiation intensity $ I_\nu $. 

However, to solve the resulting equation for $ \phi_0 $ we need some boundary condition whose derivation requires to analyze boundary layer equations for \eqref{rtestationaryeps1}. The resulting boundary layer problems are related to the description of the radiation intensity in the regions close to the boundary. The thickness of these layers is given by the Milne length and the thermalization length. Therefore, we will rescale the space variable according to $ \ell_M $ and to $ \ell_T $ and we will analyze the resulting one-dimensional problems. 

The matching between the outer and the inner solutions will provide the boundary condition for the equation satisfied in the bulk.
\subsection{Case 1.1: $ \ell_M=\ell_T\ll\ell_S $ and $ L=1 $. Equilibrium approximation}\label{subsec.1}
Since we set $ \ell_M=\eps\ll 1 $, the case $ \ell_M=\ell_T\ll\ell_S $ arises when $ \ell_A=\eps $ (i.e. $ \beta=-1 $) and $ \ell_S=\eps^{-\gamma} $ for $ \gamma>-1 $. Notice that in this case $ \ell_S $ could be small, namely $ \ell_S\ll L=1 $, but also large, e.g. if $ \gamma>0 $.

In order to find the outer problem we choose $ \delta=|\gamma| $ and we substitute \eqref{expansion} into the first equation in \eqref{rtestationaryeps1} and we identify all terms with the same power of $ \eps $, i.e. $ \eps^{-1} $, $ \eps^{|\gamma|-1} $ (if $ 0<|\gamma|<1 $) and $ \eps^0 $.
The terms of order $ \eps^{-1} $ give
\begin{equation*}\label{epsbeta1}
	0=\alpha_\nu^a(x)(B_\nu(T(x))-\phi_0(x,n,\nu)).
\end{equation*} 
Hence, the leading order satisfies $ \phi_0(x,n,\nu)=B_\nu(T(x)) $, where $ B_\nu $ is the Planck distribution which is independent of $ n\in\Ss $. This corresponds to the diffusion equilibrium approximation, since in the interior the radiation intensity is at the leading order the equilibrium Planck distribution.

The terms of order $ \eps^{|\gamma|-1} $ imply $ \phi_1=0 $. Indeed, if  $ \gamma\ne -\frac{1}{2} $, there is only one term of order $ \eps^{|\gamma|-1} $ which is\linebreak$ 0=-\alpha_\nu^a(x)\phi_1 $. On the other hand, if $ \gamma=-\frac{1}{2} $ using $  \int_\Ss K(n,n')\phi_0(x,\nu)dn'-\phi_0(x,\nu)=0 $ we obtain the same result, since $ \phi_0 $ is independent of $ n\in\Ss $. 

Finally, we compare all terms of order $ \eps^0 $. In this case we have 
\begin{equation*}\label{epszero1}
	n\cdot \nabla_x B_\nu(T(x))=-\alpha_\nu^a(x)\phi_2(x,n,\nu),
\end{equation*} 
where in the case $ \gamma=0 $ we used again that $ \left(H-Id\right)B_\nu(T)=0 $. 

Therefore we obtain the following expansion for $ I_\nu $\begin{equation}\label{case1expansion2}
	I_\nu(x,n)=B_\nu(T(x))-\eps\frac{1}{\alpha_\nu(x)}n\cdot\nabla_x B_\nu(T(x))+\cdots,
\end{equation} where $ T(x) $ is a function which is at this stage still unknown.

We now plug  \eqref{case1expansion2} into the second equation of \eqref{rtestationaryeps1}. The term of order $ \eps^0 $ cancels out because $ B_\nu(T) $ is isotropic, hence
$$ \Div\left(\int_0^\infty d\nu\int_{\Ss} dn\;n B_\nu(T(x))\right)=0.$$
We find that the leading term is the one of order $ \eps^1 $ and we obtain
\begin{equation*}\label{case1divstat}
	\Div \left(\int_0^\infty d\nu\frac{1}{\alpha_\nu(x)}\left(\int_\Ss dn\; n\otimes n\right)\nabla_x B_\nu(T(x))\right)=0.
\end{equation*}  
Finally, using that $ \int_\Ss n\otimes n\;dn=\frac{4\pi}{3}Id $ we conclude that the limit problem solved at the interior by $ T $ is 
\begin{equation}\label{interior1stationary}
	\Div\left(\int_0^\infty \frac{\nabla_x B_\nu(T(x))}{\alpha_\nu(x)}d\nu\right)=0.
\end{equation}
In order to obtain the behavior of $ I_\nu $ close to the boundary $ \bnd $ we now derive a boundary value problem that can be written in a single variable. This boundary layer equation is known in the literature as Milne problem. The matching of the solution of the Milne problem with the outer solution will provide the boundary value for the equation \eqref{interior1stationary} solved by the temperature $ T $.

We take $ p\in\bnd $. Assuming that near the boundary the radiation intensity and the temperature only depend on the distance to the boundary, we can further assume that they depend only on the distance to the boundary in direction $ n_p $. This is possible due to the smallness of the thickness of the boundary layer and the continuity of $ \alpha $. We hence define for $ x \in \Omega $ in a neighborhood of $ p $ the new scalar rescaled variable\begin{equation}\label{scalingMilne}
	y=-\frac{x-p}{\eps}\cdot n_p.
\end{equation} We recall that $ -(x-p)\cdot n_p $ is non-negative, since $ x-p $ points in the interior of the domain, and it is exactly the length of the cathetus with endpoint $ p $ of the triangle having as hypotenuse $ x-p $ (cf. Figure \ref{cathetus}).
\begin{figure}[H]\centering
	\includegraphics[height=4cm]{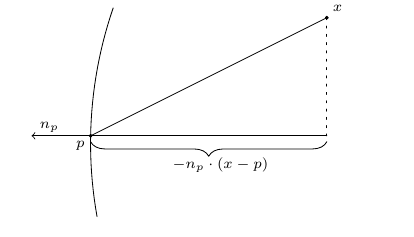}\caption{Representation of the change of variables.}\label{cathetus}
\end{figure}
Defining $ \Rot_p(x)={\text{Rot}}_p (x-p) $ as a rigid motion mapping $ p $ to zero with $ {\text{Rot}}_p(n_p)=-e_1 $ we see that we can also write $ y $ as the first component of $ y_1=\Rot_p\left(\frac{x}{\eps}\right)_1 $. Hence, as $ \eps\to 0 $ we obtain that both the absorption and scattering coefficients satisfy $ \alpha^j_\nu(x)=\alpha^j_\nu\left(\eps{\text{Rot}}_p(x)+p\right)\to \alpha^j_\nu(p) $, $ j\in\{a,s\} $.

We can now write the one-dimensional problem obtained by this new scaling and by the limit $ \eps\to 0$. Since $ \eps^{\gamma+1}\to 0 $ as $ \eps\to 0 $ the scattering term is negligible and we obtain for any $ p\in\bnd $
\begin{equation}\label{Milnecase1}
	\begin{cases}
	-(n\cdot n_p)\partial_yI_\nu(y,n;p)=\alpha_\nu^a(p)(B_\nu(T(y,p))-I_\nu(y,n;p))& y>0\;,n\in\Ss\\
	\Div\left(\int_0^\infty d\nu\int_\Ss dn\; (n\cdot n_p) I_\nu(y,n;p)\right)=0&y>0,\;n\in\Ss\\
	I_\nu(0,n;p)=g_\nu(n)& n\cdot n_p<0.
	\end{cases}
\end{equation}
The Milne equation \eqref{Milnecase1} is the equation describing the boundary layer for the diffusion approximation. In the pure absorption case the Milne problem was rigorously studied in \cite{Golse4}. The well-posedness of \eqref{Milnecase1} is shown there for constant absorption coefficients and also for coefficients depending only on the frequency $ \nu $, as well as for coefficients depending on both frequency and temperature of the form $ \alpha^a_\nu(p)=Q(\nu)\alpha(T(p)). $ Moreover, also the asymptotic behavior of $ I_\nu $ at infinity has been computed in this paper. It is indeed shown in \cite{Golse4} that as $ y\to \infty $ the solution of the Milne problem converges to the Planck distribution, i.e. $$ \lim\limits_{y\to\infty} I_\nu(y,n;p)=I^\infty_\nu(p)=B_\nu(T_\infty(p)), $$ for some $ T_\infty(p) $ depending only on $ g_\nu $ and $ p $. Notice that $ I_\nu^\infty(p) $ is independent of $ n\in\Ss $.

Moreover, since in this case the thermalization length and the Milne length are the same this is the only boundary layer appearing. The radiation intensity $ I_\nu $ becomes simultaneously isotropic and at equilibrium $ B_\nu(T) $ in the same length scale. This gives a matching condition for the temperature that has to be used as boundary condition for the new limit problem. In particular, the temperature and the radiation intensity solving the Milne problem \eqref{Milnecase1} are related by equation \eqref{stationary.temp}. In particular
\begin{equation}\label{Tinfty}
	 T_\infty(p)=\lim\limits_{y\to\infty}F^{-1}\left(\left(\int_0^\infty d\nu \alpha_\nu^a(p) I_\nu^\infty(p)\right),p\right),
\end{equation}  
where $ F $ is defined in \eqref{F} and $ g\mapsto I^\infty_\nu(p) $ is a functional that determines the limit intensity for each boundary point $ p\in\bnd $.

Summarizing, the limit problem for the stationary radiative transfer equation \eqref{rtestationaryeps} in the case $ \ell_M=\nolinebreak\ell_T\ll\nolinebreak\ell_S $ is given by the following boundary value problem 
\begin{equation*}\label{equilibtriumstationary1}
	\begin{cases}
	\Div\left(\int_0^\infty \frac{\nabla_x B_\nu(T(x))}{\alpha_\nu(x)}d\nu\right)=0& x\in\Omega\\
	T(p)=T_\infty(p)& p\in\bnd,
	\end{cases}
\end{equation*}
where $ T_\infty(p)$ is given by \eqref{Tinfty}.

\subsection{Case 1.2: $ \ell_M=\ell_T=\ell_S\ll L $. Equilibrium approximation}\label{subsec.2}

Due to the definitions $ \ell_M=\eps\ll 1 $ and $ \ell_T=\sqrt{\ell_A \ell_M} $ we have $ \ell_M=\ell_S=\ell_T=\ell_A=\eps $ in \eqref{rtestationaryeps}, i.e. $ \beta=\gamma=-1 $ in \eqref{rtestationaryeps1}.

We consider the expansion \eqref{expansion} for $ \delta=1 $, or equivalently without terms of order $ \eps^\delta $. We plug  \eqref{expansion} into \eqref{rtestationaryeps1} and we compare all terms of the same power of $ \eps $, namely $ \eps^{-1} $ and $ \eps^0 $. 
The term of order $ \eps^{-1} $ yields 
\begin{equation}\label{epsminus12}
0=\alpha_\nu^a(x)(B_\nu(T(x))-\phi_0(x,n,\nu))+\alpha_\nu^s(x)\left(\int_\Ss K(n,n') \phi_0(x,n',\nu)dn'-\phi_0(x,n,\nu)\right).
\end{equation} 
Notice that $ \phi_0(x,n,\nu)=B_\nu(T(x)) $ is a solution to \eqref{epsminus12}. This follows from Proposition \ref{prop.constant} and the isotropy of $ B_\nu(T) $. We show now that the solution to \eqref{epsminus12} is unique. 

To this end for every $ x\in\RR^3 $ and $ \nu>0 $ we define $ 0<\theta_{\nu,x}=\frac{\alpha_\nu^s(x)}{\alpha_\nu^a(x)+\alpha_\nu^s(x)}<1 $. Moreover, we define also the following operator which maps for every fixed $ x,\nu $ non-negative continuous functions to non-negative continuous functions and given by \begin{equation}\label{A}
	 A_{\nu,x}[\varphi](n)=\theta_{\nu,x}\int_\Ss K(n,n')\varphi(n')\;dn'. 
\end{equation} Then equation \eqref{epsminus12} can be rewritten as
\begin{equation}\label{epsminus1 2}
\phi_0(x,n,\nu)=A_{\nu,x}[\phi_0](x,n,\nu)+\frac{\alpha_\nu^a(x)}{\alpha_\nu^a(x)+\alpha_\nu^s(x)}B_\nu(T(x)).
\end{equation} 
Since the maps $ \phi_0\mapsto A_{\nu,x}(\phi_0) $ is a linear contraction, the Banach fixed-point theorem implies that \eqref{epsminus1 2} has a unique solution for every $ T(x)\in\RR_+ $. Hence, $ \phi_0=B_\nu(T) $. Therefore, also in this case we recover the equilibrium diffusion approximation.

We turn now to the terms of order $\eps^0 $. In this case we have 
\begin{equation*}\label{epszero2}
n\cdot \nabla_x B_\nu(T(x))=-\alpha_\nu^a(x)\phi_1(x,n,\nu)-\alpha_\nu^s(x)\left(\int_\Ss K(n,n')\phi_1(x,n',\nu)\;dn'-\phi_1(x,n,\nu)\right).
\end{equation*} 
Then, using the operator $ A_{\nu,x} $ defined as in \eqref{A}, we can rewrite this equation as
\begin{equation}\label{epszero2.2}
-\frac{1}{\alpha_\nu^a(x)+\alpha_\nu^s(x)}n\cdot \nabla_x B_\nu(T(x))=\left(Id-A_{\nu,x}\right)\phi_1(x,n,\nu).
\end{equation}
The same argument as for the term of order $ \eps^{-1} $ holds also in this case and Banach fixed-point theorem ensures the existence of a unique solution to \eqref{epszero2.2} given by 
\begin{equation*}\label{phi1.2}
	\phi_1(x,n,\nu)=-\frac{1}{\alpha_\nu^a(x)+\alpha_\nu^s(x)}\left(Id-A_{\nu,x}\right)^{-1}(n)\cdot \nabla_x B_\nu(T(x)),	
\end{equation*}
where for any $ x,\nu $ we used the notation $$ \left(Id-A_{\nu,x}\right)^{-1}(n)=\begin{pmatrix}
	\left(Id-A_{\nu,x}\right)^{-1}(n_1)\\\left(Id-A_{\nu,x}\right)^{-1}(n_2)\\\left(Id-A_{\nu,x}\right)^{-1}(n_3)
\end{pmatrix}, $$
which is well-defined due to the action of the linear operator $ A_{\nu,x} $ only on the variable $ n\in\Ss $.

Hence, we obtain the following expansion
\begin{equation}\label{case2expansion2}
I_\nu(x,n)=B_\nu(T(x))-\eps\frac{1}{\alpha_\nu^a(x)+\alpha_\nu^s(x)}\left(Id-A_{\nu,x}\right)^{-1}(n)\cdot \nabla_x B_\nu(T(x))+\eps^2\phi_2+\cdots
\end{equation}
Plugging \eqref{case2expansion2} into the second equation of \eqref{rtestationaryeps} and using that the Planck distribution is isotropic, we obtain the following limit problem solved in the domain $ \Omega $ that yields the the temperature $ T(x) $ to the leading order
\begin{equation}\label{interior2stationary}
\begin{split}
0=&\Div \left(\int_0^\infty d\nu\int_\Ss dn\; n\frac{1}{\alpha_\nu^a(x)+\alpha_\nu^s(x)}\left(Id-A_{\nu,x}\right)^{-1}(n)\cdot \nabla_x B_\nu(T(x))\right)\\=&\Div \left(\int_0^\infty d\nu\frac{1}{\alpha_\nu^a(x)+\alpha_\nu^s(x)}\left(\int_\Ss dn\; n\otimes \left(Id-A_{\nu,x}\right)^{-1}(n)\right)\nabla_x B_\nu(T(x))\right).
\end{split}
\end{equation}

The behavior of $ I_\nu $ close to the boundary $ \bnd $ is given again by a boundary layer equation which can be written in one variable. The derivation of the Milne problem for this case follows exactly the same steps as Subsection \ref{subsec.1} under the scaling \eqref{scalingMilne}. In this case both emission and scattering terms appear, since they are of the same order. Hence, for every $ p\in\bnd $ the Milne problem is given by 
\begin{equation}\label{Milnecase2}
\begin{cases}
-(n\cdot n_p)\partial_yI_\nu(y,n,p)=\alpha_\nu^a(p)(B_\nu(T(y,p))-I_\nu(y,n,p))\\\;\;\;\;\;\;\;\;\;\;\;\;\;\;\;\;\;\;\;\;\;\;\;\;\;\;\;\;\;\;\;\;\;\;\;\;\;\;\;+\alpha_\nu^s(p)\left(\int_\Ss K(n,n')I_\nu(y,n',p)\;dn'-I_\nu(y,n,p)\right)& y>0\;,n\in\Ss\\
\Div\left(\int_0^\infty d\nu\int_\Ss dn\; (n\cdot n_p) I_\nu(y,n,p)\right)=0&y>0,\;n\in\Ss\\
I_\nu(0,n,p)=g_\nu(n)& n\cdot n_p<0.
\end{cases}
\end{equation}
The mathematical properties of the Milne problem for both absorption and scattering processes have been considered in \cite{Sentis1}. Although the results provided in \cite{Sentis1} have been obtained only for the case of constant scattering kernel and constant scattering coefficient, the arguments there suggest that for more general choices of $ K $ and $ \alpha_\nu^s $ the solution $ I_\nu $ of \eqref{Milnecase2} converges to the Planck equilibrium distribution as $ y\to\infty $.

Notice that in this case, the thermalization length and the Milne length are the same, hence the boundary layers coincide.
Matching inner and outer solutions we obtain the following  boundary condition for equation \eqref{interior2stationary}
\begin{equation}\label{bnd2}
T_\infty(p)=\lim\limits_{y\to\infty}F^{-1}\left(\left(\int_0^\infty d\nu\fint_\Ss dn\; \alpha_\nu^a(p)I_\nu(y,n,p)\right),p\right),	
\end{equation}
with $ F $ as in \eqref{F}. Indeed as we have seen in Subsection \ref{subsect.2.2}, the temperature $ T $ and the radiation energy $ I_\nu $ satisfying the Milne problem \eqref{Milnecase2} are related by the identity \eqref{stationary.temp}.

Summarizing, the limit problem for the stationary radiative transfer equation \eqref{rtestationaryeps} in the case $ \ell_M=\ell_T=\ell_S $ is given by the following boundary value problem 
\begin{equation*}\label{equilibtriumstationary2}
\begin{cases}
\Div \left(\int_0^\infty \frac{d\nu}{\alpha_\nu^a(x)+\alpha_\nu^s(x)}\left(\int_\Ss dn\; n\otimes \left(Id-A_{\nu,x}\right)^{-1}(n)\right)\nabla_x B_\nu(T(x))\right)=0& x\in\Omega\\
T(p)=T_\infty(p)& p\in\bnd,
\end{cases}
\end{equation*}
where $ T_\infty $ is defined as in \eqref{bnd2} for the solution $ I_\nu(y,n,p) $ to the Milne problem \eqref{Milnecase2}.

\subsection{Case 2: $ \ell_M\ll\ell_T\ll L $. Equilibrium approximation} \label{Subsec.3}
The assumption $ \ell_T=\sqrt{\ell_M \ell_A}\ll \ell_M $ implies $ \ell_A>\ell_M $ and hence $ \eps=\ell_M=\ell_S $. We thus consider $ \ell_A=\eps^{-\beta} $ for $ \beta>-1 $. Moreover, since $ \ell_T=\eps^{\frac{1-\beta}{2}}\ll L=1 $ we restrict to the case $ \ell_A=\eps^{-\beta} $ for $ \beta\in(-1,1) $.
 
Since $ \ell_M=\ell_S\ll \ell_A $ the scattering process has a greater effect than the absorption-emission process. We expect hence the Milne problem to depend exclusively on the scattering process. In the bulk we expect also the scattering process to be present in the diffusive equation derived for the limit problem, but we will also show that at the interior the leading order of the radiation intensity is still the Planck distribution. Thus, we are again in the case of the equilibrium diffusion approximation. In this case the thermalization length is much larger than the Milne length but it is also still much smaller than the characteristic length of the domain. A second boundary layer, the so-called thermalization layer, will therefore appear. The equation describing this new layer will depend on both absorption-emission and scattering processes. Moreover, while the radiative energy becomes isotropic in the Milne layer, in the thermalization layer $ I_\nu $ will approach to the Planck distribution.

We use again the expansion \eqref{expansion} for the radiation intensity with $ \delta=|\beta| $
and we plug  it into the first equation in \eqref{rtestationaryeps}. We proceed as usual with the identification of the terms with the same power of $ \eps $, namely $ \eps^{-1} $, $ \eps^{|\beta|-1} $, $ \eps^\beta $ and $ \eps^0 $. 

Using the notation of \eqref{defH} the terms of order $ \eps^{-1} $ gives
\begin{equation*}\label{epsminus1.3}
	H[\phi_0(x,\cdot,\nu)](n)=\phi_0(x,n,\nu).
\end{equation*}
Proposition \ref{prop.constant} implies hence that $ \phi_0 $ is independent of $ n\in\Ss $ and hence $ \phi_0=\phi_0(x,\nu) $.

Next we consider for $ \beta<0 $ and $ \beta\ne -\frac{1}{2} $ the terms of power $ \eps^\beta $ and $ \eps^{|\beta|-1} $. The first case gives 
\begin{equation*}\label{epsbeta.3}
B_\nu(T(x))=\phi_0(x,\nu),
\end{equation*}
while the second case implies
\begin{equation}\label{epsbeta1.3}
H[\phi_1(x,\cdot,\nu)](n)=\phi_1(x,n,\nu).
\end{equation}
Hence, as for $ \phi_0 $ we conclude that $ \phi_1=\phi_1(x,\nu) $ is independent of $ n\in\Ss $. 

For $ \beta=-\frac{1}{2} $ the terms of order $ \eps^{-\frac{1}{2}} $ give 
\begin{equation}\label{epsbeta2.3}
0=\alpha_\nu^a(x)\left(B_\nu(T(x))-\phi_0(x,n)\right)+\alpha_\nu^s(x)\left(H[\phi_1](x,n,\nu)-\phi_1(x,n,\nu)\right).
\end{equation}
Using the invariance under rotations of $ K(n,n') $, the property $ \int_\Ss K(n,n')\;dn=1 $ and the isotropy of both $ B_\nu(T) $ and $ \phi_0 $, the integration over the whole $ \Ss $ of equation \eqref{epsbeta2.3} gives
\begin{equation}\label{epsbeta3.3}
B_\nu(T(x))=\phi_0(x,\nu).
\end{equation}
Moreover, plugging \eqref{epsbeta3.3} into \eqref{epsbeta2.3} we obtain again the equation \eqref{epsbeta1.3} and hence we conclude that $   \phi_1=\nolinebreak\phi_1(x,\nu) $.

We move now to $ \beta>0 $ and to the terms of order $ \eps^{\beta-1} $. We obtain hence once again equation \eqref{epsbeta1.3} and thus $   \phi_1=\phi_1(x,\nu) $.

We consider now the terms of order $ \eps^0 $. If $ \beta<0 $ these terms give 
\begin{equation}\label{eps0.3}
n\cdot \nabla_x B_\nu(T(x))=-\alpha_\nu^a(x)\phi_1(x,\nu)+\alpha_\nu^s(x)\left(H[\phi_2](x,n,\nu)-\phi_2(x,n,\nu)\right).
\end{equation}
Integrating equation \eqref{eps0.3} over the whole $ \Ss $ and using the isotropy of $ B_\nu(T) $ and $ \phi_1 $ we conclude $ \phi_1=0 $ and
\begin{equation}\label{phi2.3}
n\cdot \nabla_x B_\nu(T(x))=\alpha_\nu^s(x)\left(H[\phi_2](x,n,\nu)-\phi_2(x,n,\nu)\right).
\end{equation}

If $ \beta=0 $ the terms of order $ \eps^0 $ give equation
\begin{equation*}\label{eps01.3}
n\cdot \nabla_x \phi_0(x,\nu)=\alpha_\nu^a(x)\left(B_\nu(T(x))-\phi_0(x,\nu)\right)+\alpha_\nu^s(x)\left(H[\phi_2](x,n,\nu)-\phi_2(x,n,\nu)\right)
\end{equation*}
which integrated over $ \Ss $ implies, due to the isotropy of $ \phi_0 $, as for \eqref{epsbeta2.3},
$$ \phi_0=B_\nu(T),$$ which implies also in this case equation \eqref{phi2.3}. 

Finally if $ \beta>0 $ the terms of order $ \eps^0 $ give
\begin{equation*}\label{eps02.3}
n\cdot \nabla_x \phi_0(x,\nu)=\alpha_\nu^s(x)\left(H[\phi_2](x,n,\nu)-\phi_2(x,n,\nu)\right),
\end{equation*}
while the terms of order $ \eps^\beta $ give
\begin{equation}\label{eps03.3}
n\cdot \nabla_x \phi_1(x,\nu)=\alpha_\nu^a(x)\left(B_\nu(T(x))-\phi_0(x,\nu)\right)+\alpha_\nu^s(x)\left(H[\phi_3](x,n,\nu)-\phi_3(x,n,\nu)\right).
\end{equation}
Integrating \eqref{eps03.3} over the whole $ \Ss $ and using the isotropy of $ \phi_1 $, $ \phi_0 $ and $ B_\nu(T) $ we conclude once again $ \phi_0=B_\nu(T) $ and thus \eqref{phi2.3}.

Hence, for all $ \beta\in(-1,1) $ the identification of all terms of power $ \eps^{-1} $, $ \eps^{|\beta|-1} $, $ \eps^\beta $ and $ \eps^0 $ gives $ \phi_0=B_\nu(T) $, $ \phi_1=\phi_1(x,\nu) $ and 
\begin{equation}\label{phi2equiv.3}
-\frac{1}{\alpha_\nu^s(x)}n\cdot \nabla_x B_\nu(T(x))=(Id-H)[\phi_2(x,\cdot,\nu)](n).
\end{equation}
 We now study the equation \eqref{phi2equiv.3}. As we know from Proposition \ref{prop.constant} the kernel of the operator $ (Id-H) $ is given by the constant functions and its range are all functions with zero mean integral, i.e. $ \text{Ran}(Id-H)=\{\varphi\in L^\infty(\Ss): \int_\Ss \varphi=0\} $. Hence, the following linear operator is bijective $$ (Id-H):\bigslant{L^\infty(\Ss)}{\mathcal{N}(Id-H)}\to  \text{Ran}(Id-H),$$ where $ \bigslant{L^\infty(\Ss)}{\mathcal{N}(Id-H)}$ denotes the quotient space. Let $ e_i\in\RR^3 $ be the unit vector, we consider the equation \begin{equation}\label{ei}
 n\cdot e_i=(Id-H)\varphi(n).
 \end{equation}
 Since $ n\cdot e_i\in \text{Ran}(Id-H) $, for any $ c\in \RR $ the function $ \varphi(n)=(Id-H)^{-1}(n\cdot e_i)+c $ is a solution to \eqref{ei}. Therefore, using the notation \begin{equation*}\label{Id-H.inv}
 	\left(Id-H\right)^{-1}(n)=\begin{pmatrix}
 	\left(Id-H\right)^{-1}(n\cdot e_1)\\\left(Id-H\right)^{-1}(n\cdot e_2)\\\left(Id-H\right)^{-1}(n\cdot e_3)
 	\end{pmatrix}
 \end{equation*}
  and using the linearity of $ (Id-H) $ we see that $ \phi_2 $ is given by
  \begin{equation}\label{phi2.final.3}
  	\phi_2(x,n,\nu)=-\frac{1}{\alpha_\nu^s(x)}(Id-H)^{-1}(n)\cdot \nabla_x B_\nu(T(x))+c(x,\nu)
  \end{equation}
  where $ c(x,\nu) $ is independent of $ n\in\Ss $. The isotropic function $ c(x,\nu) $ does not contribute in the divergence free condition of \eqref{rtestationaryeps}, therefore we will not compute the exact value of $ c(x,\nu) $. Equation \eqref{phi2.final.3} implies that the first three terms in the expansion of $ I_\nu $ are given for all $ \beta\in(-1,1) $ by
  \begin{equation*}\label{case3expansion2}
  I_\nu(x,n)=B_\nu(T(x))+\eps^{|\beta|} \phi_1(x,\nu)-\frac{\eps}{\alpha_\nu^s(x)}(Id-H)^{-1}(n)\cdot \nabla_x B_\nu(T(x))+\eps c(x,\nu)+\eps^{|\beta|+1}\phi_3+\cdots
  \end{equation*}
  The divergence free condition in \eqref{rtestationaryeps} implies in the same manner as in the derivation of \eqref{interior2stationary} the following equation, which yields the limit problem in the interior of the domain $ \Omega $
  \begin{equation}\label{interior3stationary}
  \begin{split}
  \Div \left(\int_0^\infty d\nu\frac{1}{\alpha_\nu^s(x)}\left(\int_\Ss dn\; n\otimes \left(Id-H\right)^{-1}(n)\right)\nabla_x B_\nu(T(x))\right)=0.
  \end{split}
  \end{equation}  
  
 The behavior of $ I_\nu $ close to the boundary $ \bnd $ is described by two nested boundary layer equations. As anticipated at the beginning of Subsection \ref{Subsec.3}, since $ \ell_M\ll\ell_T\ll L $ we observe the formation of two distinct boundary layers. The first one, the Milne layer, has a thickness of size $ \ell_M $ and it is described by the Milne problem, whose derivation is similar to the derivation of the Milne problems \eqref{Milnecase1} and \eqref{Milnecase2}. The next boundary layer, which we will denote by thermalization layer, has a thickness of size $ \ell_T $ and it is described by a new boundary layer equation, which we will denote as thermalization equation and which we will construct immediately after deriving the Milne problem.
 
 Following the same procedure as in Subsection \ref{subsec.1} we can derive the Milne problem for this scaling limit under the rescaling \eqref{scalingMilne}. In this case we obtain a closed equation for $ I_\nu $ which depends only on the scattering process, since this is the largest term. Indeed, rescaling the space variable we obtain
 \begin{equation}\label{case3}
 	\begin{cases}
 		-(n\cdot n_p)\partial_yI_\nu(y,n,p)=\alpha_\nu^s\left(p+\mathcal{O}(\eps)\right)\left(\int_\Ss K(n,n')I_\nu(y,n',p)\;dn'-I_\nu(y,n,p)\right)\\\;\;\;\;\;\;\;\;\;\;\;\;\;\;\;\;\;\;\;\;\;\;\;\;\;\;\;\;\;\;\;\;\;\;\;\;\;\;\;+\eps^{\beta+1}\alpha_\nu^a\left(p+\mathcal{O}(\eps)\right)\left(B_\nu(T(y;p))-I_\nu(y,n;p)\right)& y>0\;,n\in\Ss\\
 		\int_0^\infty d\nu\int_\Ss dn\; (n\cdot n_p) \partial_yI_\nu(y,n,p)=0&y>0,\;n\in\Ss\\
 		I_\nu(0,n,p)=g_\nu(n)& n\cdot n_p<0.
 	\end{cases}
 \end{equation}

 Hence, for every $ p\in\bnd $ the Milne problem is given by
  \begin{equation}\label{Milnecase3}
  \begin{cases}
  -(n\cdot n_p)\partial_yI_\nu(y,n,p)=\alpha_\nu^s(p)\left(\int_\Ss K(n,n')I_\nu(y,n',p)\;dn'-I_\nu(y,n,p)\right)& y>0\;,n\in\Ss\\
  I_\nu(0,n,p)=g_\nu(n)& n\cdot n_p<0.
  \end{cases}
  \end{equation} 
On the other hand, we also obtain an equation for the temperature. Indeed, plugging the first equation of \eqref{case3} into the second one, we obtain to the leading order 
\begin{equation}\label{Milne3tempstationary}
	\int_0^\infty d\nu\int_\Ss dn\;\alpha_\nu^a(p)\left(B_\nu(T(y;p))-I_\nu(y,n;p)\right)=0.
\end{equation}
This equation has a steady distribution for the temperature $ T $ completely determined. At a first glance, this appears strange since in the limit equation \eqref{Milnecase3} the absorption coefficient $ \alpha^a_\nu(p) $ does not appear and the only processes able to modify the temperature are the absorption and emission of photons. However, the solution of this apparent paradox is that since \eqref{Milnecase3} describes a stationary solution, it is implicitly understood that the system was running during an infinite amount of time before and the absorption/emission process had time to bring the system to a steady state, even when this process is very small.

The Milne problem for the pure scattering case has been studied in several papers such as \cite{BardosSantosSentis,papanicolaou, GuoLei2d,Sentis1} in the context of neutron transport. Although all these results are actually obtained for functions $ \alpha^s $ independent of the frequency, since the one-speed approximation for the neutron transport (cf. \eqref{NTE}) was considered, they are expected to hold pointwise for every frequency $ \nu $. 
For example, in \cite{BardosSantosSentis} it is shown that there exists a unique solution to \eqref{Milnecase3} for strictly positive bounded and rotational symmetric scattering kernels. Moreover, as $ y\to\infty $ the solution approaches a function $ I(\nu;p) $ independent of $ n\in\Ss $. Hence, in the Milne layer the radiation intensity becomes isotropic.

We now turn to the thermalization layer. In this layer we expect the radiation intensity to approach the Planck equilibrium distribution. Moreover, the boundary value for the problem \eqref{interior3stationary} can be also found analyzing the thermalization layer. In order to construct the new boundary layer equation, i.e. the thermalization equation, we rescale the space variable according to 
the one-dimensional variable $ \eta=-\frac{x-p}{\ell_T}\cdot n_p $ for $ p\in\bnd $ and we obtain the following equation
  \begin{equation}\label{themaleq1.3}
  	\begin{cases}
  		-\eps^{\frac{1+\beta}{2}}(n\cdot n_p)\partial_\eta I_\nu(\eta,n,p)=\alpha_\nu^a\left(p+\eps^{\frac{1-\beta}{2}}\text{Rot}_p(\eta)\right)\eps^{1+\beta}(B_\nu(T(\eta,p))-I_\nu(\eta,n,p))\\\;\;\;\;\;\;\;+\alpha_\nu^s\left(p+\eps^{\frac{1-\beta}{2}}\text{Rot}_p(\eta)\right)\left(\left(\int_{\Ss} dn' K(n,n')I_\nu(\eta,n',p)\right)-I_\nu(\eta,n,p)\right)& \eta>0\;,n\in\Ss\\
  		\Div\left(\int_0^\infty d\nu\int_\Ss dn\; (n\cdot n_p) I_\nu(\eta,n,p)\right)=0&\eta>0,\;n\in\Ss\\
  		I_\nu(0,n,p)=I(\nu;p)& p\in\bnd,	
  	\end{cases}
  \end{equation}
where $ I(\nu;p)=\lim\limits_{y\to\infty}I^M(y,n,\nu;p) $ for $ I^M $ the solution to the Milne problem \eqref{Milnecase3}. In order to find the thermalization equation we proceed in a way similar to the derivation of the outer problem. We hence expand the radiation intensity according to $$ I_\nu(\eta,n;p)=\varphi_0(\eta,n,\nu;p)+\eps^{\frac{1+\beta}{2}}\varphi_1(\eta,n,\nu;p)+ \eps^{1+\beta}\varphi_2(\eta,n,\nu;p)+\cdots$$ and we identify in \eqref{themaleq1.3} all terms of the same power of $ \eps $, namely $ \eps^0 $, $ \eps^{\frac{1+\beta}{2}} $ and $ \eps^{1+\beta} $. We remark first that the functions $ \varphi_i $ for $ i\in\mathbb{N} $ could depend on $ \eps $. Moreover, the choice of the powers of $ \eps $ in the expansion of $ I_\nu $ is motivated by the order of magnitude of the sources in \eqref{themaleq1.3}.

The terms of order $ \eps^0 $ give $$ \int_\Ss K(n,n')\varphi_0 dn'=\varphi_0 $$ and hence by Proposition \ref{prop.constant} $ \varphi_0(\eta,n,\nu;p)=\varphi_0(\eta,\nu;p) $ is independent of the direction $ n\in\Ss $. The isotropy of $ \varphi_0 $ was expected as it is matched with the solution of the Milne problem, which becomes isotropic. Moreover, we see also that $ \varphi_0 $ does not depend on $ \eps $.

The terms of order $ \eps^{\frac{1+\beta}{2}} $ give $$ (n\cdot n_p)\partial_\eta \varphi_0=\alpha_\nu^s\left(p_\eps\right)(Id-H)(\varphi_1) ,$$ where we defined $ p_\eps= p+\eps^{\frac{1-\beta}{2}}\text{Rot}_p(\eta)$. Thus, arguing as in the derivation of \eqref{phi2.final.3}, Proposition \ref{prop.constant} implies $$ \varphi_1(\eta,n,\nu;p)=\frac{1}{\alpha_\nu^s\left(p_\eps\right)}(Id-H)^{-1}(n)\cdot n_p \partial_\eta \varphi_0 + c(\eta,\nu),$$ for some function $ c(\eta,\nu) $. Finally, identifying the terms of order $ \eps^{1+\beta} $ implies after an integration over $ \Ss $

\begin{equation*}\label{case3.11}
\begin{split}
		-\frac{1}{\alpha_\nu^s\left(p_\eps\right)}&\left(\fint_\Ss (n\cdot n_p)(Id-H)^{-1}(n)\cdot n_p \;dn\right)\partial_\eta^2\varphi_0(\eta,\nu;p)\\=&\alpha_\nu^a\left(p_\eps\right)\left( B_\nu(T(\eta;p))-\varphi_0(\eta,\nu;p)\right).
\end{split}
\end{equation*}
We now consider the limit as $ \eps\to 0 $ and we obtain by the continuity of the absorption and scattering coefficient
\begin{equation}\label{case3.1}	
		-\frac{1}{\alpha_\nu^s\left(p\right)}\left(\fint_\Ss (n\cdot n_p)(Id-H)^{-1}(n)\cdot n_p \;dn\right)\partial_\eta^2\varphi_0(\eta,\nu;p)=\alpha_\nu^a\left(p\right)\left( B_\nu(T(\eta;p))-\varphi_0(\eta,\nu;p)\right).
\end{equation} Moreover, the second equation in \eqref{themaleq1.3} yields  \begin{equation}\label{case3.2}
\int_0^\infty d\nu \frac{1}{\alpha_\nu^s(p)} \left(\fint_\Ss (n\cdot n_p)(Id-H)^{-1}(n)\cdot n_p \;dn\right)\partial_\eta^2\varphi_0(\eta,\nu;p)=0,
\end{equation}
where we again considered the limit $ \eps\to 0 $. Thus, the thermalization layer equation is given for every $ p\in\bnd $ by
\begin{equation}\label{themaleq2.3}
	\begin{cases}
		\varphi_0(\eta,\nu;p) -\frac{1}{\alpha_\nu^a(p)\alpha_\nu^s(p)}\left(\fint_\Ss (n\cdot n_p)(Id-H)^{-1}(n)\cdot n_p \;dn\right)\partial_\eta^2\varphi_0(\eta,\nu;p)=B_\nu(T(\eta;p))& \eta>0\\
	\int_0^\infty d\nu\;\alpha_\nu^a(p)B_\nu(T(\eta;p))=\int_0^\infty d\nu\;\alpha_\nu^a(p)\varphi_0(\eta,\nu;p)&\eta>0\\
		\varphi_0(0,\nu;p)=I(\nu;p)& p\in\bnd,	
	\end{cases}
\end{equation}
where the second equation is implied by \eqref{case3.2} taking the integral over the frequency of \eqref{case3.1}. As far as we know, the thermalization problem has not been studied so far in the literature and its well-posedness properties have not been described in detail. Nevertheless, we claim that the problem is well-posed under suitable assumptions and that the solution $ \varphi_0 $ to \eqref{themaleq2.3} converges to the Planck distribution, i.e.
$$ \lim\limits_{\eta\to\infty}\varphi_0(\eta,\nu;p)=\varphi(\nu,p)=B_\nu(T_\infty(p)).$$
From the second equation in \eqref{themaleq2.3} we recover the relation \eqref{stationary.temp} between the temperature and the radiation intensity $ \varphi_0 $. Hence, $ T(\eta;p)=F^{-1}\left(\left(\int_0^\infty d\nu\;\alpha_\nu^a(p)\varphi_0(\eta,\nu;p)\right),\eta;p\right) $ for $ F $ defined in \eqref{F}. In particular,
\begin{equation}\label{boundary.3}
T_\infty(p)=F^{-1}\left(\left(\int_0^\infty d\nu\;\alpha_\nu^a(p)\varphi(\nu,p)\right),p\right).
\end{equation}
We remark that since $ I(\nu;p) $, the limit as $ y\to\infty $ of the solution $ I^M(y,n,\nu;p) $ of the Milne problem \eqref{Milnecase3}, is a functional of the boundary condition $ g_\nu $, so are $ \varphi(\nu,p) $ and $ T_\infty(p) $ functionals of the boundary condition $ g_\nu $.
Summarizing, in the case of $ \ell_M\ll \ell_T\ll L $ the solution to \eqref{rtestationaryeps} is expected to solve in the limit problem the following equilibrium diffusion approximation given by the stationary boundary value problem
  \begin{equation*}\label{equilibtriumstationary3}
  \begin{cases}
  \Div \left(\int_0^\infty d\nu\frac{1}{\alpha_\nu^s(x)}\left(\int_\Ss dn\; n\otimes \left(Id-H\right)^{-1}(n)\right)\nabla_x B_\nu(T(x))\right)=0& x\in\Omega\\
  T(p)=T_\infty(p)& p\in\bnd,
  \end{cases}
  \end{equation*}
  where $ T_\infty $ is defined in \eqref{boundary.3} 

\subsection{Case 3: $ \ell_M\ll \ell_T=L $. Transition from equilibrium to non-equilibrium.}\label{Subsec.4}
Since $ \ell_M=\eps $ and $ \ell_T=\sqrt{\eps\ell_A}=L=1 $ we have to consider $ \ell_S=\eps $ and $ \ell_A=\eps^{-1} $.

This case is intriguing, because as we will see it yields the transition between the equilibrium approximation and the non-equilibrium approximation, i.e. the case where in the limit the radiation intensity is not given by the Planck distribution at the leading order in the bulk of the domain $ \Omega $. 

As usual we plug  the expansion \eqref{expansion} for $ \delta=1 $ into the first equation of \eqref{rtestationaryeps1} and we identify all terms of the same power of $ \eps $, namely $ \eps^{-1} $, $ \eps^0 $ and $ \eps^1 $.

The terms of order $ \eps^{-1} $ give
\begin{equation*}\label{epsminus1.4}
	\phi_0(x,n,\nu)=H[\phi_0(x,\cdot,\nu)](n),
\end{equation*}
and hence by Proposition \ref{prop.constant} the leading order is independent of $ n\in\Ss $, i.e. $ \phi_0=\phi_0(x,\nu) $.

The terms of order $ \eps^0 $ give
\begin{equation*}\label{eps0.4}
n\cdot\nabla_x\phi_0(x,\nu)=\alpha_\nu^s(x)\left(H[\phi_1(x,\cdot,\nu)](n)-\phi_1(x,n,\nu)\right).
\end{equation*}
Due to the isotropy of $ \phi_0 $, Proposition \ref{prop.constant} implies that $ \phi_1 $ is given by
\begin{equation*}\label{phi1.4}
\phi_1(x,n,\nu)=-\frac{1}{\alpha_\nu^s(x)}\left(Id-H\right)^{-1}(n)\cdot\nabla_x\phi_0(x,\nu)+c(x,\nu),
\end{equation*}
where $ c(x,\nu) $ is some function independent of $ n\in\Ss $. As in subsection \ref{Subsec.3} the isotropic function $ c(x,\nu) $ will not contribute to the divergence free condition, hence it will not be explicitly computed.

Finally, the terms of order $ \eps^1 $ yield, after an integration over $ \Ss $
\begin{equation}\label{eps1.4.2}
\begin{split}
4\pi\alpha_\nu^a(x) \phi_0(x,\nu)-\Div\left(\frac{1}{\alpha_\nu^s(x)}\left(\int_\Ss n\otimes\left(Id-H\right)^{-1}(n)\;dn\right)\nabla_x\phi_0(x,\nu)\right)=4\pi\alpha_\nu^a(x)B_\nu(T(x)),
\end{split}
\end{equation}
where we used the invariance under rotations of the scattering kernel $ K $ and the identity $ n\cdot \nabla_x f=\Div(nf) $.

Moreover, plugging the expansion $$ I_\nu(x,n)=\phi_0(x,\nu)-\frac{\eps}{\alpha_\nu^s(x)}\left(Id-H\right)^{-1}(n)\cdot\nabla_x\phi_0(x,\nu)+\eps c(x,\nu)+\eps^2\cdots  $$
into the divergence free equation in \eqref{rtestationaryeps1} we obtain at the leading order
\begin{equation*}
\Div\left(\int_0^\infty d\nu\;\frac{1}{\alpha_\nu^s(x)}\left(\int_\Ss n\otimes\left(Id-H\right)^{-1}(n)\;dn\right)\nabla_x\phi_0(x,\nu)\right)=0,
\end{equation*}
which implies integrating \eqref{eps1.4.2} the following equation for the temperature
\begin{equation*}\label{phi0T}
\int_0^\infty d\nu\;\alpha_\nu^a(x)\phi_0(x,\nu)=\int_0^\infty d\nu\;\alpha_\nu^a(x)B_\nu(T(x)).
\end{equation*}
Hence, using the definition of $ F $ in \eqref{F} we obtain the limit problem for $ \phi_0 $ in the interior, namely
\begin{equation*}\label{interior4stationary}
\begin{split}
 \phi_0(x,\nu)-\frac{1}{4\pi\alpha_\nu^a(x)}&\Div\left(\frac{1}{\alpha_\nu^s(x)}\left(\int_\Ss n\otimes\left(Id-H\right)^{-1}(n)\;dn\right)\nabla_x\phi_0(x,\nu)\right)\\=&B_\nu\left(F^{-1}\left(\left(\int_0^\infty d\nu\;\alpha_\nu^a(x)\phi_0(x,\nu)\right),x \right)\right).
\end{split}
\end{equation*}
Once more the boundary condition for the diffusion equation is given by the matching of the outer solution with the solution to a suitable boundary layer equation. Since $ \ell_T=L=1 $ the thermalization layer corresponds to the outer problem. Indeed, the radiation intensity is out of equilibrium in the limit as $ \eps\to 0 $. Hence, there is only one boundary layer, namely the Milne layer. The Milne problem describing the boundary layer for \eqref{rtestationaryeps1} as $ \ell_M\ll L=\ell_T $ is given once more by the \eqref{Milnecase3}. Indeed, the scattering term is the term of larger order with $ \ell_M=\ell_S $. Therefore, the computations in Subsection \ref{Subsec.3} hold in this case too. Summarizing, if $ \ell_M\ll \ell_T=L $ the radiation intensity and the temperature satisfy the following equation
\begin{equation*}\label{equilibtriumstationary4}
\begin{cases}
\phi_0(x,\nu)-\frac{1}{4\pi\alpha_\nu^a(x)}\Div\left(\frac{1}{\alpha_\nu^s(x)}\left(\int_\Ss n\otimes\left(Id-H\right)^{-1}(n)\;dn\right)\nabla_x\phi_0(x,\nu)\right)=B_\nu\left(T(x)\right)& x\in\Omega\\
\int_0^\infty d\nu\; \alpha_\nu^a(x)\left(B_\nu(T(x))-\phi_0(x,\nu)\right)=0& x\in\Omega\\
\phi_0(p,\nu)=I_\nu^\infty(p)& p\in\bnd,
\end{cases}
\end{equation*}
where $ I^\infty_\nu(p)=\lim\limits_{y\to\infty}I_\nu(y,n;p) $ for $ I_\nu(y,n;p) $ the solution to \eqref{Milnecase3} which converges to the isotropic function $ I_\nu^\infty $. It is important to remark here that in this case we are not obtaining an equilibrium diffusion regime. Indeed, the leading order $ \phi_0 $ is not the Planck distribution and therefore this case is an example of the non-equilibrium diffusion approximation.\\

\subsection{Case 4: $ \ell_M\ll L\ll\ell_T $. Non-Equilibrium approximation}\label{Subsec.5}
Since $ \ell_M=\eps $ the case where $ \ell_T=\sqrt{\eps \ell_A}\gg L=1 $ corresponds to $ \ell_S=\eps $ and $ \ell_A=\eps^{-\beta} $ for $ \beta>1 $. Under this assumption we obtain $ \ell_T=\eps^{\frac{1-\beta}{2}}\to \infty $ as $ \eps\to 0 $. Therefore, in this last subsection we study the case when the thermalization length $ \ell_T $ is growing to infinity as $ \eps\to 0 $. In this case we do not expect the solution to \eqref{rtestationaryeps} to approach at the interior the Planck distribution. We will indeed see that in this case we obtain the so called non-equilibrium diffusion approximation.

In order to derive the outer problem for \eqref{rtestationaryeps1} we plug  expansion \eqref{expansion} with $ \delta=\beta-1 $ into the first equation of \eqref{rtestationaryeps1} and we identify all terms of the same power of $ \eps $, namely $ \eps^{-1} $, $ \eps^{\beta-2} $, $ \eps^0 $, $ \eps^{\beta-1} $ and $ \eps^1 $.

The terms of order $ \eps^{-1} $ and $ \eps^{\beta-2} $ yield $ \int_\Ss K(n,n')\phi_i(n')dn'=\phi_i $ for $ i=0,2 $, respectively. Therefore, at the leading order the radiation intensity is isotropic, i.e. $ \phi_0=\phi_0(x,\nu) $. Moreover also $ \phi_1=\phi_1(x,\nu) $.

The terms of power $ \eps^0 $ give
\begin{equation*}\label{eps0.5}
	-\frac{1}{\alpha_\nu^s(x)}n\cdot\nabla_x\phi_0=(Id-H)[\phi_2(x,\cdot,\nu)](n).
\end{equation*}
Hence, Proposition \ref{prop.constant} implies the existence of some function $ c(x,\nu) $ independent of $ n\in\Ss $ such that
\begin{equation}\label{phi1.5}
\phi_2(x,n,\nu)=	-\frac{1}{\alpha_\nu^s(x)}(Id-H)^{-1}(n)\cdot\nabla_x\phi_0+c(x,\nu).
\end{equation}

Similar to the terms of order $ \eps^{0} $, the terms of power $ \eps^{\beta-1} $ give $ \phi_3=-\frac{1}{\alpha_\nu^s(x)}(Id-H)^{-1}(n)\cdot\nabla_x\phi_1+c(x,\nu) $. As in subsection \ref{Subsec.3} the isotropic function $ c(x,\nu) $ do not contribute to the divergence free condition and it will not be explicitly computed.

Finally, the terms of order $ \eps^1 $ imply
\begin{equation*}\label{eps1.5}
n\cdot\nabla_x\phi_2=\alpha_\nu^s(x)(H-Id)[\phi_4(x,\cdot,\nu)](n).
\end{equation*}
Hence, using \eqref{phi1.5} and integrating over $ \Ss $ we obtain the desired interior limit problem for $ \phi_0 $
\begin{equation*}\label{interior5stationary}
	\Div\left(\frac{1}{\alpha_\nu^s(x)}\left(\int_\Ss n\otimes\left(Id-H\right)^{-1}(n)\;dn\right)\nabla_x\phi_0(x,\nu)\right)=0.
\end{equation*}
Plugging now the first equation of \eqref{rtestationaryeps} into the second one we obtain also the following equation solved by the leading order of the temperature 
\begin{equation*}\label{interior5.1stationary}
	\int_0^\infty d\nu\; \alpha_\nu^a(x)\left(B_\nu(T(x))-\phi_0(x,\nu)\right)=0.
\end{equation*}
As in Subsection \ref{Subsec.3} the Milne problem for the Milne layer is given by \eqref{Milnecase3}. As in subsection \ref{Subsec.4} there is no thermalization layer since the radiation intensity does not approach the equilibrium distribution. Hence, denoting by $ I_\nu(y,n,p) $ the solution to \eqref{Milnecase3} and by $ I_\nu^\infty(p)=\lim\limits_{y\to\infty}I_\nu(y,n;p) $ we obtain for this case the following limit stationary boundary value problem
\begin{equation*}\label{equilibtriumstationary5}
\begin{cases}
\Div\left(\frac{1}{\alpha_\nu^s(x)}\left(\int_\Ss n\otimes\left(Id-H\right)^{-1}(n)\;dn\right)\nabla_x\phi_0(x,\nu)\right)=0& x\in\Omega\\
\int_0^\infty d\nu\; \alpha_\nu^a(x)\left(B_\nu(T(x))-\phi_0(x,\nu)\right)=0& x\in\Omega\\
\phi_0(p,\nu)=I_\nu^\infty(p)& p\in\bnd.
\end{cases}
\end{equation*}

\section{Time dependent diffusion approximation. The case of infinite speed of light ($ c=\infty $)}\label{Sec.5}
We turn now to the time dependent case. In physical applications the order of magnitude of the speed of light $ c $ is so large compared with the speed of heat transfer that it is often considered infinite (cf. \cite{Zeldovic}). This approximation is valid if the distance traveled by the light in the time scale in which meaningful changes of the temperature take place is much larger than the characteristic length of the body $ L $. We consider in this section the diffusion approximation for the time dependent radiative transfer equation \eqref{rtetimeps} when $ c= \infty $ and in the next sections we will consider other choices of $ c $. Under this assumption the initial-boundary value problem \eqref{rtetimeps} reduces to \eqref{rtetimepsc}. This is the case when the radiation is instantaneously transported in the domain $ \Omega $. Notice that since under this assumption in equation \eqref{rtetimepsc} there is no term containing $ \partial_tI_\nu $ we do not need to impose any initial value for $ I_\nu $.

We recall that the diffusion regime holds if $ \ell_M=\eps\ll 1 $. We will consider different choices of $ \ell_A $ and $ \ell_S $ given as powers of $\eps $. We will construct the resulting initial-boundary value limit problems as follows. We will first derive the outer problems valid in the interior of $ \Omega $. Afterwards we will construct the initial layer problems describing the transient behavior of the radiation intensity for very small times. We will formulate also boundary layer equations describing $ I_\nu $ near the boundary of $ \Omega $. It turns out that the latter are the Milne problems and the thermalization problems derived in Section \ref{Sec.4}. Finally, the matching between the outer, the boundary layer and the initial layer solutions will provide the initial value and the boundary conditions for the limit problem in the diffusion approximation under consideration.

\subsection{Outer problems}\label{4.1}
In this subsection we derive the outer problems arising from equation \eqref{rtetimepsc} under the assumption $ \ell_M=\eps\ll 1 $ and for different choices of $ \ell_A=\eps^{-\beta} $ and $ \ell_S=\eps^{-\gamma} $. As in the stationary case analyzed in Section \ref{Sec.4} there are five different cases to be considered which yield five different diffusive problems.

In order to determine the outer problems yielding the form of the solutions in the bulk of $ \Omega $ we use the expansion \begin{equation}\label{expansion1}
		I_\nu(t,x,n)=\phi_0(t,x,n,\nu)+\eps^\delta \phi_1(t,x,n,\nu)+\eps\phi_2(t,x,n,\nu)+\eps^{\delta+1}\phi_3(x,n,\nu)+...
\end{equation} for suitable choices of $ \delta $ depending on $ \ell_A $ and $ \ell_S $, plugging \eqref{expansion1} into \eqref{rtetimepsc} and identifying all terms of the same power of $ \eps $. It turns out that the diffusive problems are in this case the time dependent version of the stationary outer problems of Section \ref{Sec.4}. Indeed, since $ c=\infty $ the first equation in \eqref{rtetimepsc} is a stationary equation for the intensity $ I_\nu $. Therefore, the same computations of Section \ref{Sec.4} show that for any choice of $ \ell_A $ and $ \ell_S $ the first order term $ \phi_0 $ is isotropic and the next non-isotropic term arising in the expansion of $ I_\nu $ is of order $ \eps^1 $.

Hence, in the case $ \ell_T\leq 1 $, i.e. $\tau_{\mathit{h}}=\frac{1}{\eps}$, since the time derivative of the temperature in the second equation of \eqref{rtetimepsc} is a term of order $ \eps^0 $ which is balanced by the divergence of the flux of energy, we obtain the following outer problems
\begin{enumerate}[(i)]
	\item for $ \ell_M=\ell_T\ll\ell_S $ \begin{equation}\label{outerc1}
	\partial_tT(t,x)-\frac{4\pi}{3}\Div\left(\int_0^\infty \frac{\nabla_x B_\nu(T(t,x))}{\alpha_\nu(x)}d\nu\right)=0,
\end{equation}
\item for $ \ell_M=\ell_T=\ell_S $ \begin{equation}\label{outerc2}
	\begin{split}
		\partial_tT(t,x)=\Div \left(\int_0^\infty d\nu\frac{1}{\alpha_\nu^a(x)+\alpha_\nu^s(x)}\left(\int_\Ss dn\; n\otimes \left(Id-A_{\nu,x}\right)^{-1}(n)\right)\nabla_x B_\nu(T(t,x))\right),
	\end{split}
\end{equation}
\item for $ \ell_M\ll\ell_T\ll L $ \begin{equation}\label{outerc3}
	\partial_tT(t,x)=\Div \left(\int_0^\infty d\nu\frac{1}{\alpha_\nu^s(x)}\left(\int_\Ss dn\; n\otimes \left(Id-H\right)^{-1}(n)\right)\nabla_x B_\nu(T(t,x))\right),
\end{equation}
\item for $ \ell_M\ll L=\ell_T $ \begin{equation}\label{outerc4}
	\begin{cases}
		-\frac{1}{\alpha_\nu^a(x)}\Div\left(\frac{1}{\alpha_\nu^s(x)}\left(\fint_\Ss n\otimes\left(Id-H\right)^{-1}(n)\;dn\right)\nabla_x\phi_0(t,x,\nu)\right)=B_\nu\left(T(t,x)\right)-\phi_0(t,x,\nu) \\
		\partial_t T(t,x)-\Div\left(\int_0^\infty d\nu\;\frac{1}{\alpha_\nu^s(x)}\left(\fint_\Ss n\otimes\left(Id-H\right)^{-1}(n)\;dn\right)\nabla_x\phi_0(t,x,\nu)\right)=0.
	\end{cases}
\end{equation}
\end{enumerate}
In the case $ \ell_M\ll L\ll\ell_T $, namely when $ \tau_{\mathit{h}}=\frac{1}{\eps^\beta} $ for $ \beta>1 $ the outer problem is
\begin{equation}\label{outerc5}
	\begin{cases}
		\Div\left(\frac{1}{\alpha_\nu^s(x)}\left(\fint_\Ss n\otimes\left(Id-H\right)^{-1}(n)\;dn\right)\nabla_x\phi_0(t,x,\nu)\right)=0\\
		\partial_t T(t,x)-\int_0^\infty d\nu\;\alpha_\nu^a(x)\left(B_\nu\left(T(t,x)\right)-\phi_0(t,x,\nu) \right)=0.
	\end{cases}
\end{equation}
Indeed, plugging the expansion \eqref{expansion1} with $ \delta=\beta-1 $ into the first equation in \eqref{rtetimepsc} we obtain, arguing as in Section \ref{Subsec.5}, that the leading order $ \phi_0 $ is isotropic and solves the stationary equation
$$ \Div\left(\frac{1}{\alpha_\nu^s(x)}\left(\fint_\Ss n\otimes\left(Id-H\right)^{-1}(n)\;dn\right)\nabla_x\phi_0(t,x,\nu)\right)=0. $$
Moreover, plugging the second equation of \eqref{rtetimepsc} into the second one yields
$$ 	\partial_t T(t,x)-\int_0^\infty d\nu\;\alpha_\nu^a(x)\left(B_\nu\left(T(t,x)\right)-\phi_0(t,x,\nu) \right)=0.$$
These are the equations describing the radiation intensity and the temperature on the bulk away from the boundary and for positive times.

We remark that as for the stationary problem the regimes of equilibrium diffusion approximations are for $ \ell_T\ll L $ and correspond to the problems \eqref{outerc1}, \eqref{outerc2} and \eqref{outerc3} while the regimes of non-equilibrium approximations are for $ \ell_T\gtrsim L $ and are described by \eqref{outerc4} and \eqref{outerc5}.
\subsection{Initial layer equations and boundary layer equations}\label{4.2}
As in the case of the stationary diffusion approximation, the radiation intensity $ I_\nu $ and the temperature $ T $ can change abruptly near the boundaries, i.e. boundary layers might arise. In addition, in the time dependent case also the behavior of $ (T,I_\nu) $ could change quickly for small times. We will denote the latter as initial layers. In this subsection we construct the initial layers for distances to the boundary of order $ 1 $ and boundary layers for positive times of order $ 1 $. We denote by initial layer equations the problems derived for times$ t\ll 1 $ and solved at the interior of $ \Omega $. Similarly, the boundary layer equations are problems derived rescaling the space variable only and solved for any $ t>0 $.

In the considered case, i.e. $ c=\infty $, there are no initial layers for the temperature appearing on the bulk, i.e. for distances to the boundary of order $ 1 $. To see this we have to consider two different cases. We recall that the second equation in \eqref{rtetimepsc} is
\begin{equation}\label{second}
	\partial_t T(t,x)+\tau_{\mathit{h}} \Div\left(\int_0^\infty d\nu\int_\Ss dn\; nI_\nu(t,x,n)\right)=0.
\end{equation} Hence, if $ \ell_T\leq 1 $ the heat parameter is $ \tau_{\mathit{h}}=\frac{1}{\eps} $. Therefore in equation \eqref{second} the divergence of the flux of radiative energy is multiplied by $ \eps^{-1} $. As indicated before $ \phi_0 $ is isotropic. In addition to that, since the first non-isotropic term is of order $ \eps $, it follows that in \eqref{second} the term containing the divergence is of order $ 1 $ in the bulk. Therefore $ \partial_t T $ is of order $ 1 $ and as a consequence $ T\simeq T_0 $ for small times $ t\ll 1 $ and no initial layer appears. On the other hand, in the case $ \ell_T\gg 1 $ the heat parameter is $ \tau_{\mathit{h}}=\ell_A=\frac{1}{\eps^\beta}$ for $ \beta>1 $. In this case the leading term of the divergence of the total flux of energy is of order $ \eps^\beta $ and it is given by $$ \eps^\beta\Div\left(\int_0^\infty d\nu\int_\Ss dn\;n\phi_3\right)=\eps^\beta \int_0^\infty d\nu\int_\Ss dn\;\alpha_\nu^a\left(B_\nu(T)-\phi_0\right)  ,$$ where $ \phi_3 $ is the term of order $ \eps^{\beta} $ in the expansion \eqref{expansion1} obtained for $ \delta=\beta-1 $. This implies again that $ \partial_t T $ is of order $ 1 $ and hence, there are also in this case no initial layers.\\

We now examine the boundary layers appearing for times of order $ 1 $. In this case, similarly as in the stationary case, Milne and thermalization layers arise. It turns out that the equations describing the radiation intensity near the boundary are given either by the stationary Milne problems \eqref{Milnecase1}, \eqref{Milnecase2}, \eqref{Milnecase3}, or by the thermalization problem \eqref{themaleq2.3} or by a combination of both of them depending on the choice of $ \ell_A $ and $ \ell_S $.

We begin describing first the Milne layers. We rescale the space variable according to \linebreak$ y=-\frac{x-p}{\eps}\cdot n_p $, where $ \ell_M=\eps $ and $ p\in\bnd $. We express also the absorption and scattering lengths according to $ \ell_A=\eps^{-\beta} $, $ \ell_S=\eps^{-\gamma} $ with $\min\{\beta,\gamma\}=-1$. With this notation, \eqref{rtetimepsc} becomes
\begin{equation}\label{timec.boundary.2}
	\begin{cases}
		-(n\cdot n_p)\partial_y I_\nu(t,y,n;p)=\eps^{\beta+1} \alpha^a_\nu\left(p+\mathcal{O}\left(\eps\right)\right)(B_\nu(T)-I_\nu)\\\;\;\;\;\;\;\;\;\;\;\;\;\;\;\;\;\;\;\;\;+\eps^{\gamma+1}\alpha_\nu^s\left(p+\mathcal{O}\left(\eps\right)\right)\left(\int_\Ss K(n,n')I_\nu\;dn'-I_\nu\right)&y>0\\
		\partial_t T(t,y;p)-\frac{\tau_{\mathit{h}}}{\eps} \left(\int_0^\infty d\nu \int_\Ss dn\;(n\cdot n_p)\partial_y I_\nu\right)=0& y>0\\
		T(0,y;p)=T_0(y;p)& y>0\\
		I_\nu(t,0,n;p)=g_\nu(t,n)& n\cdot n_p<0.	\end{cases}
\end{equation}
Letting $ \eps\to0 $ we obtain different Milne problems for different choices of $ \beta $ and $ \gamma $. With similar arguments as in Section \ref{Sec.4} we can see that the Milne problems are the same as the one derived for the stationary case, except for the fact that the unknowns depend also on the variable $ t $. However, the variable $ t $ appears only as a parameter and the Milne problems are stationary. These are given by \eqref{Milnecase1} in the case $ \gamma>-1 $, by \eqref{Milnecase2} if $ \gamma=\beta=-1 $ and finally by \eqref{Milnecase3} if $ \beta>-1 $. Notice that we are assuming that, if the incoming radiation $ g_\nu $ depends on time, it does it only for times $ t $ of order one.

We remark that when $ \beta>-1 $ the Milne problem \eqref{Milnecase3} is a closed problem involving only the radiation intensity $ I_\nu $. If $ \ell_T\ll L $, in order to determine the temperature close to the boundary we have to solve the stationary equation 
\begin{equation*}\label{temp}
	\int_0^\infty d\nu\int_\Ss dn\;\alpha_\nu^a(p)\left(B_\nu(T(t,y;p))-I_\nu(t,y,n;p)\right) 	=0.
\end{equation*}
This is the same equation we obtained in the stationary case in \eqref{Milne3tempstationary}. On the other hand, if $ \ell_T\gtrsim L $ the temperature is related to the radiation intensity by a time dependent equation similar to the second one in \eqref{outerc4} and \eqref{outerc5}, namely the equations describing the temperature in the bulk, i.e.
%
\begin{equation}\label{milnec1}
	\partial_tT(t,y;p)+ \int_0^\infty d\nu\int_\Ss dn\;\alpha_\nu^a(p)\left(B_\nu(T(t,y;p))-I_\nu(t,y,n;p)\right) 	=0.
\end{equation}
Besides the Milne layer in the case $ \ell_M\ll\ell_T\ll L $ we observe also the formation of a thermalization layer at distance $ \ell_T $ to the boundary. The equation describing this layer is obtained with a change of variable $ \eta=-\frac{x-p}{\ell_T}\cdot n_p $ for $ p\in\bnd $. Recall that in this case we consider $ \ell_S=\eps $ and $ \ell_A=\eps^{-\beta} $ for $ \beta\in(-1,1) $ and hence $ \ell_T=\eps^{\frac{1-\beta}{2}} $ and $ \tau_{\mathit{h}}=\frac{1}{\eps} $. Thus \eqref{rtetimepsc} becomes under this rescaling
\begin{equation}\label{timec.boundary.1}
	\begin{cases}
		-\eps^{\frac{1+\beta}{2}}(n\cdot n_p)\partial_\eta I_\nu(t,\eta,n;p)= \alpha^a_\nu\left(p+\mathcal{O}\left(\eps^{\frac{1-\beta}{2}}\right)\right)\eps^{\beta+1}(B_\nu(T)-I_\nu)\\\;\;\;\;\;\;\;\;\;\;\;\;\;\;\;\;\;\;\;\;+\alpha_\nu^s\left(p+\mathcal{O}\left(\eps^{\frac{1-\beta}{2}}\right)\right)\left(\int_\Ss K(n,n')I_\nu\;dn'-I_\nu\right)&\eta>0\\
		\partial_t T(t,\eta;p)-\eps^{\frac{\beta-3}{2}} \left(\int_0^\infty d\nu \int_\Ss dn\;(n\cdot n_p)\partial_\eta I_\nu\right)=0& \eta>0.\\\end{cases}
	\end{equation}
We see once more that the thermalization layer equation is equation \eqref{themaleq2.3}, the equation constructed for the stationary problem in Section \eqref{Subsec.3}. 
Finally, matching the solution of the boundary layer equations with the outer problem we can construct the boundary condition for the diffusive initial-boundary limit problem. We will summarize these problems in the following subsection.
\subsection{Limit problems in the bulk}\label{4.3}
We summarize now the time dependent PDE problems that we obtain for the equation \eqref{rtetimepsc} as $ \ell_M\to 0 $ for all different choices of $ \ell $'s. They are given by the outer problems \eqref{outerc1}-\eqref{outerc5} valid in the bulk for positive times. Since there are no initial layers appearing for times $ t\ll 1 $, the initial condition is $ T(t,x)=T_0(x) $ for any choice of $ \ell_A $ and $ \ell_S $. Moreover, the boundary condition is given by the matching of the solution of the boundary layer problems with the outer solution.
\begin{enumerate}[(i)]
\item If $ \ell_M=\ell_T\ll\ell_S $ then the problem is given by
\begin{equation}\label{equilibtriumtime1.5}
	\begin{cases}
		\partial_tT(t,x)-\frac{4\pi}{3}\Div\left(\int_0^\infty \frac{\nabla_x B_\nu(T(t,x))}{\alpha_\nu(x)}d\nu\right)=0& x\in\Omega,\;t>0\\
		T(0,x)=T_0(x)& x\in\Omega\\
		T(t,p)=\lim\limits_{y\to\infty}F^{-1}\left(\left(\int_o^\infty d\nu \alpha_\nu^a(p) I_\nu(t,y,n;p)\right),y,p\right)& p\in\bnd,\;t>0,
	\end{cases}
\end{equation}
where $ I_\nu(y,n;p) $ is the solution to the Milne problem \eqref{Milnecase1}.
\item If $ \ell_M=\ell_T\ll L $, we obtain the following limit problem 
\begin{equation}\label{equilibtriumtime2.5}\hspace{-1cm}
	\begin{cases}
		\partial_tT(t,x)=\Div \left(\int_0^\infty d\nu\frac{1}{\alpha_\nu^a(x)+\alpha_\nu^s(x)}\left(\int_\Ss dn\; n\otimes \left(Id-A_{\nu,x}\right)^{-1}(n)\right)\nabla_x B_\nu(T(x))\right)& x\in\Omega,\;t>0\\
		T(0,x)=T_0(x)& x\in\Omega\\
		T(t,p)=\lim\limits_{y\to\infty}F^{-1}\left(\left(\int_0^\infty d\nu\fint_\Ss dn\; \alpha_\nu^a(p)I_\nu(t,y,n,p)\right),y,p\right)& p\in\bnd,\;t>0,
	\end{cases}
\end{equation}
where $ I_\nu(y,n,p) $ solves the Milne problem \eqref{Milnecase2}. 
\item We turn now to the case $\ell_M\ll \ell_T\ll L$, which corresponds to the case $ \ell_M=\eps=\ell_S $ and $ \ell_A=\eps^{-\beta} $ for $ \beta\in(-1,1) $. We obtain the following limit problem
\begin{equation}\label{equilibriumtime3.5}
	\begin{cases}
		\partial_t T-\Div \left(\int_0^\infty d\nu\frac{1}{\alpha_\nu^s(x)}\left(\int_\Ss dn\; n\otimes \left(Id-H\right)^{-1}(n)\right)\nabla_x B_\nu(T(x))\right)=0& x\in\Omega\\
		T(0,x)=T_0(x)& x\in\Omega\\
		T(t,p)=\lim\limits_{\eta\to\infty}F^{-1}\left(\left(\int_0^\infty d\nu \fint_\Ss dn\; \alpha_\nu^a(x)\varphi_0(t,\eta,\nu;p)\right),y,p\right)& p\in\bnd,\;t>0,
	\end{cases}
\end{equation}
where $\varphi_0(t,\eta,\nu;p)$ solves the thermalization equations \eqref{themaleq2.3} with boundary value \linebreak$ \varphi_0(t,0,\nu;p)= \lim\limits_{y\to\infty} I_\nu(t,y,n,\nu;p)$ for $ I_\nu $ the solution to the Milne problem \eqref{Milnecase3} with boundary value $ g_\nu(t,n) $.
\item We consider now the last two cases where $\ell_M\ll L\lesssim\ell_T $. The limit problem in the case $ \ell_T=L $ is 
\begin{equation}\label{equilibtriumtimec4}
	\begin{cases}
		-\frac{1}{\alpha_\nu^a(x)}\Div\left(\frac{1}{\alpha_\nu^s(x)}\left(\fint_\Ss n\otimes\left(Id-H\right)^{-1}(n)\;dn\right)\nabla_x\phi_0(t,x,\nu)\right)\\\;\;\;\;\;\;\;\;\;\;\;\;\;\;\;\;\;\;\;\;\;\;\;\;\;\;\;\;\;\;\;\;\;\;\;\;\;\;\;\;\;\;\;\;\;\;\;\;\;\;\;\;\;\;\;\;\;\;\;\;\;\;\;\;\;\;\;\;\;\;\;\;=B_\nu\left(T(t,x)\right)-\phi_0(t,x,\nu)& x\in\Omega,\;t>0\\
		\partial_t T(t,x)-\int_0^\infty d\nu\;\int_\Ss dn\; dn\;\alpha_\nu^a(x)\left(B_\nu\left(T(t,x)\right)-\phi_0(t,x,\nu)\right)=0& x\in\Omega,\;t>0\\
		T(0,x)=T_0(x)& x\in\Omega\\		\phi_0(t,p,\nu)=\lim\limits_{y\to\infty}\fint_\Ss I_\nu(t,y,n,p)& p\in\bnd,\;t>0,\\
	\end{cases}
\end{equation}
where $ I_\nu(t,y,n,p) $ solves the Milne problem \eqref{Milnecase3} for the boundary value $ g_\nu(t,n) $. Notice that in the problem \eqref{Milnecase3} the time $ t $ appears just as a parameter. 
\item Finally, if $ L\ll \ell_T $ with the same notation as above the limit problem in this case is
\begin{equation}\label{equilibtriumtimec5}
	\begin{cases}
		\Div\left(\frac{1}{\alpha_\nu^s(x)}\left(\fint_\Ss n\otimes\left(Id-H\right)^{-1}(n)\;dn\right)\nabla_x\phi_0(t,x,\nu)\right)=0& x\in\Omega,\;t>0\\
		\partial_t T(t,x)+\int_0^\infty d\nu \int_\Ss \alpha^a_\nu(x)\left(B_\nu(T)(t,x)-I_\nu(t,x,n)\right)=0& x\in\Omega,\;t>0\\\
		T(0,x)=T_0(x)& x\in\Omega\\		\phi_0(t,p,\nu)=\lim\limits_{y\to\infty}\fint_\Ss I_\nu(t,y,n,p)& p\in\bnd,\;t>0.\\
	\end{cases}
\end{equation}
Also for this case the boundary condition is obtained by the solution of the boundary layer described by the Milne problem \eqref{Milnecase3}. 
\end{enumerate}

\subsection{Initial-boundary layers}
It is important to notice that in regions very close to the boundary and for a times $ t\ll1 $ new layers could appear. These are the regions where the radiation intensity $ I_\nu $ and the temperature $ T $ change from the solution of the initial layer equation to the solution of the boundary layer equation. For this reason we denote these layers as initial-boundary layers. In this section we will derive the equations describing them for any choice of $ \ell_A $ and $ \ell_S $. In the following we will always denote by $ p $ a point belonging to the boundary, i.e. $ p\in\bnd $.
\begin{enumerate}[(i)]
\item If $ \ell_M=\ell_T\ll \ell_S $ we observe the formation of only one initial-boundary layer. It is described by an equation which can be constructed rescaling the space variable as $ y=-\frac{x-p}{\eps}\cdot n_p $ and the time by $ t=\eps^2 \tau $. Indeed, since in this case $ \beta=-1 $ (because $ \ell_A=\eps $) and $ \tau_h=\eps^{-1} $ we see that the leading term of divergence of the flux of energy is of order $ \eps^{-2} $ in the following equation
\begin{equation}\label{milnec}
	\partial_tT(t,y;p)+ \tau_{\mathit{h}} \eps^\beta \int_0^\infty d\nu\int_\Ss dn\;\alpha_\nu^a(p)\left(B_\nu(T(t,y;p))-I_\nu(t,y,n;p)\right) 	=0.
\end{equation}
This equation is obtained plugging the first equation in \eqref{timec.boundary.2} into the second one. We recall that equation \eqref{timec.boundary.2} is obtained after a rescaling of only the space variable. Hence, the time rescaling $ t=\eps^2 \tau $ gives a  non-trivial equation for the temperature. Thus, the radiation intensity $ I_\nu $ and the temperature $ T $ solve the following initial-boundary layer equation
\begin{equation*}\label{timec-boundary.1final}
	\begin{cases}
		-(n\cdot n_p)\partial_y I_\nu(\tau,y,n;p)= \alpha^a_\nu(p)(B_\nu(T(\tau,y))-I_\nu(\tau,y,n;p))&y>0,\;\tau>0\\
		\partial_tT(\tau,y)-\left(\int_0^\infty d\nu \int_\Ss dn\;(n\cdot n_p)\partial_yI_\nu(\tau,y,n;p)\right)=0& y>0,\;\tau>0\\
		T(0,y;p)=T_0(p)& y>0\\
		I_\nu(\tau,0,n;p)=g_\nu(0,n)&n\cdot n_p<0,\;\tau>0.	\end{cases}
\end{equation*}
\item In the case $ \ell_M=\ell_T=\ell_S $ under the scaling $ y=-\frac{x-p}{\ell_M}\cdot n_p $ and $ t=\eps^{2}\tau $ we obtain as above the following initial-boundary layer equation
\begin{equation*}\label{timec-boundary.2final}
	\begin{cases}
		-(n\cdot n_p)\partial_y I_\nu(\tau,y,n;p)= \alpha^a_\nu(p)(B_\nu(T(\tau,y))-I_\nu(\tau,y,n;p))\\\text{\phantom{nel mezzo del cammin di }}+\alpha_\nu^s\left(\int_\Ss K(n,n')I_\nu(\tau,y,n';p)\;dn'-I_\nu(\tau,y,n;p)\right)&y>0,\;\tau>0\\
		\partial_\tau T(\tau,y)+\Div \left(\int_0^\infty d\nu \int_\Ss dn\;nI_\nu\right)=0& y>0,\;\tau>0\\
		T(0,y;p)=T_0(p)& y>0\\
		I_\nu(\tau,0,n;p)=g_\nu(0,n)&n\cdot n_p<0,\;\tau>0.	\end{cases}
\end{equation*}
\item If $ \ell_A\ll\ell_T\ll L $ we obtain two different initial-boundary layers. This is consistent with the fact that there are two boundary layers appearing, namely the Milne layer, in which $ I_\nu $ becomes isotropic, and the thermalization layer, in which $ I_\nu $ approaches to the Planck distribution. We now notice that rescaling the space variable by $ y=-\frac{x-p}{\eps}\cdot n_p $ and the time variable according to $ t=\eps^{1-\beta} \tau$ equation \eqref{milnec} gives the following initial-boundary Milne layer equation
\begin{equation}\label{timec-boundary.3}
	\begin{cases}
		-(n\cdot n_p)\partial_y I_\nu(\tau,y,n;p)= \alpha_\nu^s(p)\left(\int_\Ss K(n,n')I_\nu(\tau,y,n';p)dn'-I_\nu(\tau,y,n,\nu;p)\right) &y>0,\;\tau>0\\
		\partial_t T(\tau,y)+\int_0^\infty d\nu \int_\Ss dn\; \alpha^a_\nu(p)\left(B_\nu(T)(\tau,y;p)-I_\nu(\tau,y,n;p)\right)=0& y>0,\;\tau>0\\
		T(0,y;p)=T_0(p)& y>0\\
		I_\nu(\tau,0,n;p)=g_\nu(0,n)&n\cdot n_p<0,\;\tau>0.
	\end{cases}
\end{equation}
Moreover, rescaling the space variable according to $ \eta=-\frac{x-p}{\ell_T}\cdot n_p $ and the time by $ t=\eps^{1-\beta} \tau$ from equation \eqref{timec.boundary.1} we obtain the following initial-boundary thermalization layer equation
\begin{equation*}\label{timec-boundary.5final}
	\begin{cases}
		\varphi_0(\tau,\eta,\nu;p) -\frac{1}{\alpha_\nu^a(p)\alpha_\nu^s(p)}\left(\fint_\Ss (n\cdot n_p)(Id-H)^{-1}(n)\cdot n_p \;dn\right)\partial_\eta^2\varphi_0(\tau,\eta,\nu;p)\\\;\;\;\;\;\;\;\;\;\;\;\;\;\;\;\;\;\;\;\;\;\;\;\;\;\;=B_\nu(T(\tau,\eta;p))& \eta>0,\;\tau>0\\
		\partial_\tau T-\int_0^\infty d\nu\int_\Ss dn\; \alpha^a_\nu(p)\left(B_\nu(T)(\tau,\eta;p)-I_\nu(\tau,\eta,n;p)\right) =0&\eta>0,\;\tau>0\\
		T(0,\eta;p)=T_0(p)& y>0\\
		\varphi_0(\tau,0,\nu;p)=I(0,\nu;p)& p\in\bnd,\;\tau>0.	
	\end{cases}
\end{equation*}
This is the initial-boundary layer equation describing the transition from the initial value to the boundary value in the limit problem \eqref{equilibriumtime3.5}.
\item[(iv)+(v)] Finally, in the last two considered case, namely when $ \ell_T\gtrsim L $ we do not obtain a initial-boundary layer. However, under the space variable rescale $ y=-\frac{x-p}{\eps}\cdot n_p $ for the Milne problem \eqref{Milnecase3} we obtained also an evolution equation for the temperature valid for all $ t>0 $ given as we saw in \eqref{milnec1} by 
\begin{equation*}\label{timec-boundary.4}
	\begin{cases}
		-(n\cdot n_p)\partial_y I_\nu(t,y,n;p)= \alpha_\nu^s(p)\left(\int_\Ss K(n,n')I_\nu(t,y,n';p)dn'-I_\nu(t,y,n,\nu;p)\right) &y>0, t>0\\
		\partial_t T(t,y)+\int_0^\infty d\nu \int_\Ss \alpha^a_\nu(p)\left(B_\nu(T)(t,y;p)-I_\nu(t,y,n;p)\right)=0& y>0, t>0\\
		T(0,y;p)=T_0(p)&y>0\\
		I_\nu(t,0,n;p)=g_\nu(t,n)&n\cdot n_p<0, t>0.
	\end{cases}
\end{equation*}
\end{enumerate}

\section{Time dependent diffusion approximation. The case of speed of light of order $ 1 $}\label{Sec.6}
In this section we construct the limit problem solved by the solution of the time dependent equation \eqref{rtetimeps} when $ \ell_M\to 0 $ and the speed of light is finite. Without loss of generality we consider first the case $ c=1 $. Physically this means that the characteristic time for the propagation of light is similar to the time of the heat transfer process. This situation can be expected to be relevant only in astrophysical applications. The strategy is the same as in Section \ref{Sec.5}. We will first formulate the limit problem valid at the interior of the domain $ \Omega $ for positive times. In Subsection \ref{subs5.2} we will consider the formation of initial and boundary layers. In this case we will obtain non-trivial initial layer equations. On the other hand, as in Section \ref{Sec.5} the boundary layer equations are stationary and are the same equations we constructed in Section \ref{Sec.4}. Finally, in Subsections \ref{subs5.3} and \ref{subs5.4} we will summarize the initial boundary value problem that we have obtained and we will construct the initial-boundary layer equations that we have to consider in order to describe the behavior of the solution in a small neighborhood of the boundary for times $ t\ll1 $. 
\subsection{Outer problems}\label{subs5.1}
We consider equation \eqref{rtetimeps} in the case $ c=1 $ and under the assumption $ \ell_M=\eps $ for the different choices of $ \ell_A=\eps^{-\beta} $ and $ \ell_S=\eps^{-\gamma} $. 
Expanding $ I_\nu $ according to \eqref{expansion1} and identifying in \eqref{rtetimeps} all terms of the same order we conclude as we computed in Section \ref{Sec.4} and Section \ref{Sec.5} that the first order $ \phi_0(t,x,n,\nu) $ of the intensity $ I_\nu $ is isotropic and the first non-isotropic term is of order $ \eps^1 $. Moreover, as long as $ \ell_T\ll L $ we have $ \phi_0(t,x,\nu)=B_\nu(T(t,x)) $. The outer problems in the case $ \ell_T\leq 1 $, i.e. $ \tau_{\mathit{h}}=\frac{1}{\eps} $ are given
\begin{enumerate}[(i)]
\item for $ \ell_M=\ell_T\ll\ell_S $ by \begin{equation*}\label{outer.1}
	\partial_tT(t,x)+4\pi\sigma \partial_tT^4(t,x)-\frac{4\pi}{3}\Div\left(\int_0^\infty \frac{\nabla_x B_\nu(T(t,x))}{\alpha_\nu(x)}d\nu\right)=0,
\end{equation*}
\item for $ \ell_M=\ell_T=\ell_S $ by \begin{equation*}\label{outer.2}
	\begin{split}
		\partial_t&T(t,x)+4\pi\sigma \partial_tT^4(t,x)\\&=\Div \left(\int_0^\infty d\nu\frac{1}{\alpha_\nu^a(x)+\alpha_\nu^s(x)}\left(\int_\Ss dn\; n\otimes \left(Id-A_{\nu,x}\right)^{-1}(n)\right)\nabla_x B_\nu(T(t,x))\right),
	\end{split}	
\end{equation*}
\item for $ \ell_M\ll\ell_T\ll L $ by \begin{equation*}\label{outer.3}
	\partial_tT(t,x)+4\pi\sigma \partial_tT^4(t,x)=\Div \left(\int_0^\infty d\nu\frac{1}{\alpha_\nu^s(x)}\left(\int_\Ss dn\; n\otimes \left(Id-H\right)^{-1}(n)\right)\nabla_x B_\nu(T(t,x))\right),
\end{equation*}
\item for $ \ell_M\ll L=\ell_T $ by \begin{equation}\label{outer.4}
	\begin{cases}
		\partial_t\phi_0(t,x,\nu)-\Div\left(\frac{1}{\alpha_\nu^s(x)}\left(\fint_\Ss n\otimes\left(Id-H\right)^{-1}(n)\;dn\right)\nabla_x\phi_0(t,x,\nu)\right)\\\;\;\;\;\;\;\;\;\;\;\;\;\;\;\;\;\;\;\;\;\;\;\;\;\;\;\;\;\;\;\;\;\;\;\;\;\;\;\;\;\;\;\;\;\;\;\;\;\;\;\;\;\;\;\;\;\;=\alpha_\nu^a(x)\left(B_\nu\left(T(t,x)\right)-\phi_0(t,x,\nu)\right) \\
		\partial_t T(t,x)+4\pi\int_0^\infty d\nu\;\alpha_\nu^a(x)\left(B_\nu(T(t,x))-\phi_0(t,x,\nu)\right)=0.
	\end{cases}
\end{equation}
\end{enumerate}
 In the case $ \ell_T \gg 1 $, i.e. $ \tau_{\mathit{h}}=\ell_A=\eps^{-\beta} $ for $ \beta>1 $, a similar computation to the one for the derivation of the problem \eqref{outerc5} yields 
\begin{equation}\label{outer.5}
	\begin{cases}
		\Div\left(\frac{1}{\alpha_\nu^s(x)}\left(\fint_\Ss n\otimes\left(Id-H\right)^{-1}(n)\;dn\right)\nabla_x\phi_0(t,x,\nu)\right)=0\\
		\partial_t T(t,x)+4\pi\int_0^\infty d\nu\;\alpha_\nu^a(x)\left(B_\nu(T(t,x))-\phi_0(t,x,\nu)\right)=0.
	\end{cases}
\end{equation}
Indeed, in the first equation of \eqref{rtetimeps} the leading order of the term containing the time derivative of $ I_\nu $ is of power $ \eps^0 $ as the emission-absorption term. On the other hand, the leading order $ \phi_0 $ of the radiation intensity is isotropic and the first non-isotropic term is of order $ \eps^1 $. Therefore, the identification in the first equation of \eqref{rtetimeps} of the terms of order $ \eps^{1-\beta}\gg \eps^0 $ gives the stationary equation in \eqref{outer.5} solved by $ \phi_0 $. Finally, plugging the first equation of \eqref{rtetimeps} into the second one yields the equation for the temperature as in \eqref{outer.5}.
\subsection{Initial layer equations and boundary layer equations}\label{subs5.2}
In this subsection we will describe the initial layers and the boundary layers appearing for time scales smaller than the heat parameter $ \tau_h $ and for regions close to the boundary, respectively. We start with the initial layers and we will see that similarly as for the boundary layers considered in Sections \ref{Sec.4} and \ref{Sec.5} there are two nested initial layers appearing. Indeed, in a first layer, i.e. for a very small time scale, the radiation intensity becomes isotropic, while in a second initial layer it becomes eventually the Planck distribution for the temperature. We will denote the first layer as initial Milne layer and the second one as initial thermalization layer, due to their analogy with the boundary layers considered in Sections \ref{Sec.4} and \ref{Sec.5}. We will also see that while the initial Milne layer appears for every choice of $ \ell_A $ and $ \ell_S $, the initial thermalization layer coincides with the initial Milne layer (if $ \ell_M=\ell_T $), appears after the initial Milne layer (if $ \ell_M\ll \ell_T\ll L $) or it is not present at all (if $ \ell_T\gtrsim L $).  

We recall that under the assumption $ \ell_A=\eps^{-\beta} $ and $ \ell_S=\eps^{-\gamma} $ for $ \min\{\beta,\gamma\}=-1 $ equation \eqref{rtetimeps} writes
\begin{equation}\label{rtetimeps2}
	\begin{cases}
		\partial_tI_\nu(t,x,n)+\tau_{\mathit{h}} n\cdot \nabla_x I_\nu(t,x,n)=\alpha_\nu^a(x)\eps^\beta \tau_{\mathit{h}}\left(B_\nu(T(t,x))-I_\nu(t,x,n)\right)\\\text{\phantom{nel mezzo del cammin}}+\alpha_\nu^s(x)\eps^\gamma \tau_{\mathit{h}}\left(\int_\Ss K(n,n')I_\nu(t,x,n')\;dn'-I_\nu(t,x,n)\right)&x\in\bnd,n\in\Ss,t>0\\
		\partial_tT+\partial_t \left(\int_0^\infty d\nu\int_\Ss dn \;I_\nu(t,n,x)\right)+\tau_{\mathit{h}}\Div\left(\int_0^\infty d\nu\int_\Ss dn \;nI_\nu(t,n,x)\right)=0&x\in\bnd,n\in\Ss,t>0\\
		I_\nu(0,x,n)=I_0(x,n,\nu)&x\in\bnd,n\in\Ss\\
		T(0,x)=T_0(x)&x\in\bnd\\
		I_\nu(t,n,x)=g_\nu(t,n)&x\in\bnd,n\cdot n_x<0,t>0.		
	\end{cases}
\end{equation}
Notice that the leading term in the first equation is of order $ \frac{\eps}{\tau_{\mathit{h}}} $. Therefore under a time rescaling $ t=\frac{\eps}{\tau_{\mathit{h}}} \tau $ the first equation writes
\begin{equation*}\label{rtetime3}
	\partial_\tau I_\nu=\eps^{\beta+1}\alpha_\nu^a(x)\left(B_\nu(T)-I_\nu\right)+\eps^{\gamma+1}\alpha_\nu^s(x)\left(\int_\Ss K(n,n')I_\nu\;dn'-I_\nu\right)+\eps n\cdot \nabla_x I_\nu
\end{equation*}
while the second one is 
\begin{equation*}\label{rtetime4}
		\partial_\tau T+\partial_\tau \left(\int_0^\infty d\nu\int_\Ss dn \;I_\nu\right)+\eps\Div\left(\int_0^\infty d\nu\int_\Ss dn \;nI_\nu\right)=0.
\end{equation*}
It is hence easy to see that for any choice of $ \ell_M $ and $ \ell_S $ there is an initial layer with thickness of order $ \frac{\eps}{\tau_{\mathit{h}}} $. Notice that as long as $ \ell_T\lesssim 1 $ (i.e. $ \tau_{\mathit{h}}=\eps^{-1} $) this initial layer has thickness of order $ \eps^2 $, while in the case $ \ell_T\gg 1 $ (i.e. $ \tau_{\mathit{h}}=\eps^{-\beta} $ for $ \beta>1 $) the order is $ \eps^{1+\beta} $. This layer plays the role of the Milne boundary layer in the time dependent case, as in this layer the radiation intensity becomes isotropic. For this reason we will denote it as the initial Milne layer.
\begin{enumerate}[(i)]
\item In the case $ \ell_M=\ell_T\ll \ell_S $ the initial Milne layer is described by the following initial Milne equation for the leading order of the radiation intensity
\begin{equation}\label{time.layer.1}
	\begin{cases}
		\partial_\tau \varphi_0(\tau,x,n,\nu)=\alpha^a_\nu(x)\left(B_\nu(T(\tau,x))-\varphi_0(\tau,x,n,\nu)\right)&\tau>0\\
		\partial_\tau T(\tau,x)+\int_0^\infty d\nu\int_\Ss dn\;\alpha^a_\nu(x)\left(B_\nu(T(\tau,x))-\varphi_0(\tau,x,n,\nu)\right)=0 &\tau>0\\
		\varphi_0(0,x,n,\nu)=I_0(x,n,\nu)\\
		T(0,x)=T_0(x).
	\end{cases}
\end{equation}
This equation plays the same role of the Milne problem and we expect $ T\to T_\infty $ and $ \varphi_0\to B_\nu(T_\infty) $ as $ \tau\to\infty $. Indeed, given a bounded solution to the equation \eqref{time.layer.1}, assuming $ T_\infty(x)=\lim\limits_{\tau\to\infty}T(\tau,x) $ and using simple ODE's arguments we have
\begin{equation}\label{limit.1}
	\varphi_0(\tau,x,n,\nu)=I_0e^{-\alpha_\nu^a(x)\tau}+\int_0^\tau \alpha_\nu^a(x)e^{-\alpha_\nu^a(x)(\tau-s)}B_\nu(T(s,x)\;ds\underset{\tau\to\infty}{\longrightarrow}B_\nu(T_\infty(x)).
\end{equation}
%
\item We turn now to the case $ \ell_M=\ell_T=\ell_S\ll L $. The initial Milne equation is
\begin{equation}\label{time.layer.2}
	\begin{cases}
		\partial_\tau \varphi_0(\tau,x,n,\nu)=\alpha^a_\nu(x)\left(B_\nu(T(\tau,x))-\varphi_0(\tau,x,n,\nu)\right)\\\text{\phantom{nel mezzo del cammin}}+\alpha_\nu^s(x)\left(\int_\Ss K(n,n')\varphi_0(\tau,x,n',\nu)\;dn'-\varphi_0(\tau,x,n,\nu)\right)&\tau>0\\
		\partial_\tau T(\tau,x)+\int_0^\infty d\nu\int_\Ss dn\;\alpha^a_\nu(x)\left(B_\nu(T(\tau,x))-\varphi_0(\tau,x,n,\nu)\right)=0 &\tau>0\\
		\varphi_0(0,x,n,\nu)=I_0(x,n,\nu)\\
		T(0,x)=T_0(x).
	\end{cases}
\end{equation}
Again, assuming $ T_\infty(x)=\lim\limits_{\tau\to\infty}T(\tau,x) $ for a bounded solution to \eqref{time.layer.2} we can write an explicit formula for $ \varphi_0 $ and we also obtain 
\begin{equation*}\label{varphi0.1}
	\begin{split}	\varphi_0(\tau,x,n,\nu)=&I_0e^{-(\alpha_\nu^a(x)+\alpha_\nu^s(x))\tau}+\int_0^\tau\alpha_\nu^a(x)e^{-(\alpha_\nu^a(x)+\alpha_\nu^s(x))(\tau-s)}B_\nu(T(s,x))\;ds\\
		&+\int_0^\tau\alpha_\nu^s(x)e^{-(\alpha_\nu^a(x)+\alpha_\nu^s(x))(\tau-s)}H[\varphi_0](\tau,x,n,\nu)\\
		=&e^{-(\alpha_\nu^a(x)+\alpha_\nu^s(x))\tau}\sum_{n=0}^\infty \frac{(\alpha_\nu^s(x)\tau)^n}{n!}H^n[I_0](x,n,\nu)\\&+\int_0^\tau\alpha_\nu^a(x)e^{-\alpha_\nu^a(x)(\tau-s)}B_\nu(T(s,x))\;ds\\
		&\underset{\tau\to\infty}{\longrightarrow}B_\nu(T_\infty(x)).
	\end{split}
\end{equation*}
\item For the case $ \ell_M\ll\ell_T\ll L$ similarly as for the boundary layers we expect the solution to the initial Milne layer equation to become isotropic but not necessarily to become the Planck distribution. In this case the initial Milne equation is 
\begin{equation}\label{time.layer.3}
	\begin{cases}
		\partial_\tau \varphi_0(\tau,x,n,\nu)=\alpha_\nu^s(x)\left(\int_\Ss K(n,n')\varphi_0(\tau,x,n',\nu)\;dn'-\varphi_0(\tau,x,n,\nu)\right)&\tau>0\\
		\partial_\tau T(\tau,x)=0 &\tau>0\\
		\varphi_0(0,x,n,\nu)=I_0(x,n,\nu)\\
		T(0,x)=T_0(x).
	\end{cases}
\end{equation}
On one hand we have $ T(\tau,x)=T_0(x) $ for all $ \tau>0 $, on the other hand we have $$ \varphi_0(\tau,x,n,\nu)=\exp(-\alpha_\nu^s(x)\tau (Id-H))I_0 .$$ Using standard spectral theory for the compact self-adjoint operator $ H\in\mathcal{L}\left(L^2(\Ss),L^2(\Ss)\right) $ we see that the greatest eigenvalue of $ H $ is $ 1 $ with eigenfunctions being the constants. Hence, an application of the spectral gap theory and of the continuous functional calculus (cf. \cite{reedsimon1}) yields the limit $$  \lim\limits_{\tau\to\infty}\varphi_0(\tau,x,n,\nu)=\varphi(x,\nu) ,$$
where $ \varphi $ is independent of $ n\in\Ss $. Moreover, $ \varphi(x,\nu)=\fint_\Ss I_0(x,n,\nu) $. Indeed, integratin over $ \Ss $ the first equation of \eqref{time.layer.3} we obtain using that $ \int_\Ss K(n,n')dn=1 $ the equation
\begin{equation*}
	\begin{cases}
		\partial_\tau \fint_\Ss\varphi_0(\tau,x,n,\nu)dn=0&\tau>0\\
		\fint_\Ss\varphi_0(0,x,n,\nu)dn=\fint_\Ss I_0(x,n,\nu)dn.
	\end{cases}
\end{equation*}
Hence, we conclude by the isotropy of $ \varphi $ $$\fint_\Ss I_0(x,n,\nu)dn= \fint_\Ss\varphi_0(\tau,x,n,\nu)dn\underset{\tau\to\infty}{\longrightarrow}\varphi(x,\nu) .$$
The study of the Milne initial layer described by \eqref{time.layer.3} has been rigorously studied in the context of the one-speed neutron transport equation in \cite{papanicolaou} and in \cite{Lei3d}, i.e when $ \alpha^s_\nu $ is independent of $ \nu $. While in \cite{papanicolaou} the behavior of the neutron distribution for small times is analyzed for general kernels using stochastic methods, in \cite{Lei3d} equation \eqref{time.layer.3} is solved for a very specific scattering kernel, namely the constant kernel $ K=\frac{1}{4\pi} $.\\

Moreover, there is also an initial thermalization layer. Indeed, under the rescaling $ t=\eps^{1-\beta}\tau $ for $ \beta~\in~(-1,1) $, $ \gamma=-1 $ and therefore $ \tau_{\mathit{h}}=\frac{1}{\eps} $ equation \eqref{rtetimeps2} becomes
\begin{equation}\label{time.layer.3-1}
	\begin{cases}
		\partial_\tau I_\nu(\tau,x,n,)+\eps^{-\beta}n\cdot \nabla_xI_\nu(\tau,x,n)=\alpha_\nu^a(x)\left(B_\nu(T(\tau,x))-I_\nu(\tau,x,n)\right)\\\;\;\;\;\;+\frac{\alpha_\nu^s(x)}{\eps^{1+\beta}}\left(\int_\Ss K(n,n')I_\nu(\tau,x,n')\;dn'-I_\nu(\tau,x,n)\right)&\tau>0\\
		\partial_\tau T(\tau,x)+\int_0^\infty d\nu\int_\Ss dn\;\partial_\tau I_\nu(\tau,x,n)+\eps^{-\beta}\Div\left(\int_0^\infty d\nu\int_\Ss dn\;I_\nu(\tau,x,n)n\right)=0 &\tau>0\\
	\end{cases}
\end{equation}
As we have seen several times, the leading order $ \varphi_0 $ of $ I_\nu $ in \eqref{time.layer.3-1} is isotropic. Moreover, for $ \beta\geq0 $ also the term of order $ \eps^\beta $ is isotropic. Hence, the initial thermalization layer equation for the leading order of the radiation intensity is given by is 
\begin{equation}\label{time.layer.3-2}
	\begin{cases}
		\partial_\tau \varphi_0(\tau,x,\nu)=\alpha_\nu^a(x)\left(B_\nu(T(\tau,x))-\varphi_0(\tau,x,\nu)\right)&\tau>0\\
		\partial_\tau T(\tau,x)+\int_0^\infty d\nu\int_\Ss dn\;\partial_\tau \varphi_0(\tau,x,\nu)=0 &\tau>0\\
		\varphi_0(0,x,n,\nu)=\varphi(x,\nu)=\fint_\Ss I_0(x,n,\nu)dn\\
		T(0,x)=T_0(x).
	\end{cases}
\end{equation}
As for equation \eqref{time.layer.1} arguing as in \eqref{limit.1} we expect $ \varphi_0(\tau,x,\nu)\to B_\nu(T_\infty(x)) $ as $ \tau\to\infty $ denoting by $ T_\infty(x)=\lim\limits_{\eta\to\infty}T(\tau,x) $.
\item[(iv)+(v)] Finally, in both cases $ \ell_M\ll \ell_T=L $ and $ \ell_M\ll L\ll \ell_T $, i.e. in the non-equilibrium diffusion case, we observe the formation of only the initial Milne layer in which the radiation intensity becomes isotropic. In both cases the initial Milne layer equation is once again \eqref{time.layer.3}.
\end{enumerate}
We study now the boundary layers. We notice that in \eqref{rtetimeps2} $ \partial_t I_\nu $ has relative order $ \tau_{\mathit{h}}^{-1} $ compared to $ n\cdot \nabla_x I_\nu $. Therefore, any rescaling of the space variable by $ \xi=-\frac{x-p}{\eps^\alpha}\cdot n_p $ for $ \eps^\alpha\in\{\ell_M=\eps,\ell_T\}\ll L $ and $ p\in\bnd $ yields the boundary layer equations constructed in Section \ref{4.2}. Indeed, under such procedure the system becomes
\begin{equation}\label{rtetimepsbnd}
	\begin{cases}
		\frac{\eps^\alpha}{\tau_{\mathit{h}}}\partial_tI_\nu(t,\xi,n;p)-(n\cdot n_p)\partial_\xi I_\nu(t,\xi,n;p)=\alpha_\nu^a(p+\mathcal{O}({\eps^\alpha}))\eps^{\beta+\alpha}\left(B_\nu(T(t,\xi;p))-I_\nu(t,\xi,n;p)\right)\\\text{\phantom{nel mezzo del cammin di nost}}+\alpha_\nu^s(p+\mathcal{O}({\eps^\alpha}))\eps^{\gamma+\alpha}\left(\int_\Ss K(n,n')I_\nu(t,\xi,n';p)\;dn'-I_\nu(t,\xi,n;p)\right)\\
		\partial_tT(t,\xi;p)+\partial_t \left(\int_0^\infty d\nu\int_\Ss dn \;I_\nu(t,\xi,n;p)\right)-\eps^{-\alpha}\tau_{\mathit{h}}\partial_\xi\left(\int_0^\infty d\nu\int_\Ss dn \;(n\cdot n_p)I_\nu(t,\xi,n;p)\right)=0\\
		I_\nu(0,\xi,n;p)=I_0(\xi,n,\nu;p)\\
		T(0,\xi)=T_0(\xi)\\
		I_\nu(t,0,n;p)=g_\nu(t,n)\text{\phantom{nel mezzo del cammin di nostra vita mi ritrovai in una selva osc}}n\cdot n_p<0.		
	\end{cases}
\end{equation}
Under these rescalings we obtain namely the Milne problems \eqref{Milnecase1} for $ \ell_M=\ell_T\ll\ell_S $ and \eqref{Milnecase2} for $ \ell_M=\ell_T=\ell_S\ll L $. In the case $ \ell_M\ll \ell_T\ll L $ there are two boundary layers appearing described by the Milne problem \eqref{Milnecase3} and by the thermalization equation \eqref{themaleq2.3}. Finally, if $ \ell_M\ll L \lesssim\ell_T $ the Milne boundary layer is described by \eqref{Milnecase3}.

\subsection{Limit problems in the bulk}\label{subs5.3}
We summarize now the PDEs which are expected to be solved by the solution of \eqref{rtetimeps} in the limit $ \ell_M=\eps\to0 $ for any different choice of $ \ell_T $ as the speed of light is finite, i.e. $ c=1 $. 
\begin{enumerate}[(i)]
\item In the case when $ \ell_M=\ell_T\ll \ell_S $, the limit problem is given  by 
\begin{equation*}\label{final.1}
	\begin{cases}
		\partial_tT(t,x)+4\pi\sigma \partial_tT^4(t,x)-\frac{4\pi}{3}\Div\left(\int_0^\infty \frac{\nabla_x B_\nu(T(t,x))}{\alpha_\nu(x)}d\nu\right)=0& t>0,\;x\in\Omega\\
		T(0,x)=T_\infty(x)&x\in\Omega\\
		T(t,x)=\lim\limits_{y\to\infty}\left(\int_0^\infty \alpha_\nu^a(p)I_\nu(t,y,n;p)\right)&p\in\bnd,
	\end{cases}
\end{equation*}
where $ I_\nu(t,y,n;p) $ is the solution to the Milne problem \eqref{Milnecase1} for the boundary value $ g_\nu(t,n) $ and\linebreak $ T_\infty(x)=\lim\limits_{\tau\to\infty}T(\tau,x) $ is defined as the limit of the solution to the initial layer \eqref{time.layer.1}.
\item If $ \ell_M=\ell_T=\ell_S\ll L $, i.e. $ \ell_S=\ell_A=\eps $ and $ \tau_{\mathit{h}}=\eps^{-1} $, the limit problem that describes the temperature in the interior of $ \Omega $ for positive times is
\begin{equation*}\label{final.2}
	\begin{cases}
		\partial_tT(t,x)+4\pi\sigma \partial_tT^4(t,x)\\\text{\phantom{nel mezzo del}}=\Div \left(\int_0^\infty d\nu\frac{1}{\alpha_\nu^a(x)+\alpha_\nu^s(x)}\left(\int_\Ss dn\; n\otimes \left(Id-A_{\nu,x}\right)^{-1}(n)\right)\nabla_x B_\nu(T(t,x))\right)& t>0,\;x\in\Omega\\
		T(0,x)=T_\infty(x)=\lim\limits_{\tau\to\infty}T(\tau,x)&x\in\Omega\\
		T(t,x)=\lim\limits_{y\to\infty}\left(\int_0^\infty \alpha_\nu^a(p)I_\nu(t,y,n;p)\right)&p\in\bnd,
	\end{cases}
\end{equation*}
where $ I_\nu(t,y,n;p) $ is the solution to the Milne problem \eqref{Milnecase2} for the boundary value $ g_\nu(t,n) $ and $ T(\tau,x) $  the solution to the initial layer \eqref{time.layer.2}.
\item We move now to the case $ \ell_M\ll \ell_T\ll L $, hence we consider $ \ell_S=\eps $ and $ \ell_A=\eps^{-\beta} $ for $ \beta\in(-1,1) $ and $ \tau_{\mathit{h}}=\eps^{-1} $. The limit problem is
\begin{equation*}\label{final.3}
	\begin{cases}
		\partial_tT(t,x)+4\pi\sigma \partial_tT^4(t,x)\\\text{\phantom{nel mezzo del}}=\Div \left(\int_0^\infty d\nu\frac{1}{\alpha_\nu^s(x)}\left(\int_\Ss dn\; n\otimes \left(Id-H\right)^{-1}(n)\right)\nabla_x B_\nu(T(t,x))\right)& t>0,\;x\in\Omega\\
		T(0,x)=T_\infty(x)=\lim\limits_{\tau\to\infty}T(\tau,x)&x\in\Omega\\
		T(t,p)=\lim\limits_{y\to\infty}\left(\int_0^\infty d\nu\; \alpha_\nu^a(p)\varphi_0(t,\eta,\nu;p)\right)&p\in\bnd,
	\end{cases}
\end{equation*}
where $ T(\tau,x) $ solves the initial layer \eqref{time.layer.3-2} and $ \varphi_0 $ is the solution to the thermalization problem \eqref{themaleq2.3}.
\item If $ \ell_M\ll L=\ell_T$ the limit problem is 
\begin{equation}\label{equilibtriumtime4.1}
	\begin{cases}
		\partial_t\phi_0(t,x,\nu)-\Div\left(\frac{1}{\alpha_\nu^s(x)}\left(\fint_\Ss n\otimes\left(Id-H\right)^{-1}(n)\;dn\right)\nabla_x\phi_0(t,x,\nu)\right)\\\text{\phantom{nel mezzo del cammin di nostra vita mi ritrovai}}=\left(B_\nu\left(T(t,x)\right)-\phi_0(t,x,\nu)\right)& x\in\Omega,\;t>0\\
		\partial_t T(t,x)+4\pi\int_0^\infty d\nu\;\alpha_\nu^a(x)\left(B_\nu(T(t,x))-\phi_0(t,x,\nu)\right)=0& x\in\Omega,\;t>0\\
		\phi(0,x,\nu)=\varphi(x,\nu)=\fint_\Ss I_0(x,n,\nu)dn\\
		T(0,x)=T_0(x)& x\in\Omega\\		\phi_0(t,p,\nu)=\lim\limits_{y\to\infty}\fint_\Ss I_\nu(t,y,n,p)& p\in\bnd,\;t>0,\\
	\end{cases}
\end{equation}
where $ I_\nu(t,y,n,p) $ solves the Milne problem \eqref{Milnecase3} for the boundary value $ g_\nu(t,n) $ and\linebreak $ \varphi(p,\nu)=\lim\limits_{\tau\to\infty}\varphi_0(\tau,p,n,\nu) $ for the solution to \eqref{time.layer.3}.
\item Finally, if $ \ell_M\ll L\ll\ell_T$ the limit problem is with the same notation as in \eqref{equilibtriumtime4.1}
\begin{equation}\label{equilibriumtime5.1}
	\begin{cases}
		\Div\left(\frac{1}{\alpha_\nu^s(x)}\left(\fint_\Ss n\otimes\left(Id-H\right)^{-1}(n)\;dn\right)\nabla_x\phi_0(t,x,\nu)\right)=0& x\in\Omega,\;t>0\\
		\partial_t T(t,x)+4\pi\int_0^\infty d\nu\;\alpha_\nu^a(x)\left(B_\nu(T(t,x))-\phi_0(t,x,\nu)\right)=0& x\in\Omega,\;t>0\\
		T(0,x)=T_0(x)& x\in\Omega\\		\phi_0(t,p,\nu)=\lim\limits_{y\to\infty}\fint_\Ss I_\nu(t,y,n,p)& p\in\bnd,\;t>0.\\
	\end{cases}
\end{equation}
\end{enumerate}
\subsection{Initial-boundary layers}\label{subs5.4}
We conclude Section \ref{Sec.6} considering the initial-boundary layer equations, which can be found studying \eqref{rtetimepsbnd}. This equation shows that on the one hand under the space rescale $ \xi=-\frac{x-p}{\eps^\alpha}\cdot n_p $ for $ p\in\bnd $ and $ \eps^\alpha\in\{\ell_M,\ell_T\} $ the time derivative term $ \partial_tI_\nu $ becomes of the same order of $ \partial_\xi I_\nu $ rescaling the time by $ t=\frac{\eps^{\alpha}}{\tau_{\mathit{h}}}\tau $, on the other hand it becomes of the same order of the absorption-emission term if we consider $ t=\frac{\tau}{\eps^\beta \tau_{\mathit{h}}} $. It is not difficult to see that rescaling the space variable according to the Milne length $ \ell_M=\eps $ we obtain a non-trivial equation of the leading order of $ I_\nu $ in both time and space variables only rescaling the time by $ t=\frac{\eps}{\tau_{\mathit{h}}}\tau $. In the case $ \ell_M\ll \ell_T\ll L $, i.e. when $ \ell_S=\eps $ and $ \ell_A=\eps^{-\beta} $ with $ \beta\in(-1,1) $ and $ \tau_{\mathit{h}}=\frac{1}{\eps} $, a thermalization layer also appears. It is described for small times and for $ x\in\Omega $ close to $ \bnd $ by the equation obtained rescaling the space variable by $ \ell_T=\eps^{\frac{1-\beta}{2}} $ and the time variable in a suitable way so that the resulting equation is non-trivial in both variables. This is the case when $ t=\eps^{1-\beta}\tau $. 
\begin{enumerate}[(i)]
\item If $ \ell_M=\ell_T\ll\ell_S $, i.e. if $ \beta=-1 $ and $ \gamma>-1 $ and $ \tau_{\mathit{h}}=\eps^{-1} $, rescaling the spatial variable by $ y=-\frac{x-p}{\eps}\cdot n_p $ for $ p\in\bnd $ and under the time rescaling $ t=\eps^{2}\tau $ we obtain the initial-boundary layer equation
\begin{equation*}\label{time-boundary.1final.1}
	\begin{cases}
		\partial_\tau I_\nu(\tau,y,n;p)-(n\cdot n_p)\partial_y I_\nu(\tau,y,n;p)= \alpha^a_\nu(p)(B_\nu(T(\tau,y))-I_\nu(\tau,y,n;p))&y>0,\;\tau>0\\
		\partial_tT(\tau,y)+\int_0^\infty d\nu \int_\Ss dn\;\partial_\tau I_\nu(\tau,y,n;p)-\partial_y \left(\int_0^\infty d\nu \int_\Ss dn\;(n\cdot n_p)I_\nu(\tau,y,n;p)\right)=0& y>0,\;\tau>0\\
		I_\nu(0,y,n;p)=I_0(p,n,\nu)& y>0\\
		T(0,y;p)=T_0(p)& y>0\\
		I_\nu(\tau,0,n;p)=g_\nu(0,n)&n\cdot n_p<0, \tau>0.	\end{cases}
\end{equation*}
\item In the case $ \ell_M=\ell_T=\ell_s $ we rescale again the variables according to $ y=-\frac{x-p}{\ell_M}\cdot n_p $ for $ p\in\bnd $ and $ t=\eps^{2}\tau $ and we obtain the following initial-boundary layer equation
\begin{equation*}\label{time-boundary.2final.1}
	\begin{cases}
		\partial_\tau I_\nu(\tau,y,n;p)-(n\cdot n_p)\partial_y I_\nu(\tau,y,n;p)= \alpha^a_\nu(p)(B_\nu(T(\tau,y))-I_\nu(\tau,y,n;p))\\\text{\phantom{nel mezzo del cammin di nostra}}+\alpha_\nu^s\left(\int_\Ss K(n,n')I_\nu(\tau,y,n';p)\;dn'-I_\nu(\tau,y,n;p)\right)&y>0,\;\tau>0\\
		\partial_\tau T(\tau,y)\int_0^\infty d\nu \int_\Ss dn\;\partial_\tau I_\nu(\tau,y,n;p)-\partial_y \left(\int_0^\infty d\nu \int_\Ss dn\;(n\cdot n_p)I_\nu(\tau,y,n;p)\right)=0& y>0,\;\tau>0\\
		I_\nu(0,y,n;p)=I_0(p,n,\nu)& y>0\\
		T(0,y;p)=T_0(p)& y>0\\
		I_\nu(\tau,0,n;p)=g_\nu(0,n)&n\cdot n_p<0,\tau>0.	\end{cases}
\end{equation*}
\item If $ \ell_M\ll \ell_T\ll L $ there are two initial-boundary layers appearing. In order to find the initial-boundary layer equation describing the transition from $ T_\infty $ to $ \lim\limits_{y\to\infty}\left(\int_0^\infty d\nu\; \alpha_\nu^a(p)\varphi_0(t,\eta,\nu;p)\right) $ we rescale first the space variable according to $ \eta=\frac{x-p}{\ell_T}\cdot n_p $ for $ p\in\bnd $ with $ \ell_T=\eps^{\frac{1-\beta}{2}} $ and the time variable according to $ t=\eps^{1-\beta}\tau $ and following the same computations as we did in Section \ref{Sec.5} in equation \eqref{timec-boundary.3} we obtain the initial-boundary layer equation \begin{equation*}\label{time-boundary.5final.1}
	\begin{cases}
		\partial_\tau \varphi_0(\tau,\eta,\nu;p)-\frac{1}{\alpha_\nu^s(p)}\left(\fint_\Ss (n\cdot n_p)(Id-H)^{-1}(n)\cdot n_p \;dn\right)\partial_\eta^2\varphi_0(\tau,\eta,\nu;p)\\\text{\phantom{nel mezzo del cammin di nostra vita mi rit}}=\alpha_\nu^a(p)\left(B_\nu(T(\tau,\eta;p))-\varphi_0(\tau,\eta,\nu;p)\right)& \eta>0,\;\tau>0\\
		\partial_\tau T(\tau,y;p)+ \int_0^\infty d\nu\int_\Ss dn\;\alpha_\nu^a(p)\left(B_\nu(T(\tau,\eta;p))-\varphi_0(\tau,\eta,\nu;p)\right) =0&\eta>0,\;\tau>0\\
		\varphi_0(0,\eta,\nu;p)=\varphi(p,\nu)=\fint_\Ss I_0(p,n,\nu)dn&\eta>0\\
		T(0,\eta;p)=T_0(p)&\eta>0\\
		\varphi_0(\tau,0,\nu;p)=I(0,\nu;p)& n\cdot n_p<0,\tau>0	,
	\end{cases}
\end{equation*}
where we used $ I(0,\nu;p)=\lim\limits_{y\to\infty} I_\nu(0,y,n;p) $ for the solution to the Milne problem \eqref{Milnecase3} and also` $ \varphi(p,\nu)=\lim\limits_{\tau\to\infty}\varphi_0(\tau,p,n,\nu) $ for the solution to \eqref{time.layer.3}.\\

Rescaling now both space and time variables according to $ y=\frac{x-p}{\eps}\cdot n_p $ for $ p\in\bnd $ and $ t=\eps^2\tau $ we obtain another initial-boundary layer equation which explains the transition from $ I(0,\nu;p) $ to $ \varphi(p,\nu) $. This is given by the following equation
\begin{equation}\label{time-boundary.3final.1}
	\begin{cases}
		\partial_\tau I_\nu(\tau,y,n;p)-(n\cdot n_p)\partial_y I_\nu(\tau,y,n;p)=\\\text{\phantom{nel mezzo del cammin di}}+\alpha_\nu^s\left(\int_\Ss K(n,n')I_\nu(\tau,y,n';p)\;dn'-I_\nu(\tau,y,n;p)\right)&y>0,\;\tau>0\\
		\partial_\tau T(\tau,y)=0& y>0,\;\tau>0\\
		I_\nu(0,y,n;p)=I_0(p,n,\nu)& y>0\\
		T(0,y;p)=T_0(p)& y>0\\
		I_\nu(\tau,0,n;p)=g_\nu(0,n)&n\cdot n_p<0,\tau>0.	\end{cases}
\end{equation} 
\item[(iv)+(v)]If $ \ell_T\gtrsim L $ under the rescaling $ y=\frac{x-p}{\eps}\cdot n_p $ for $ p\in\bnd $ and $ t=\eps^2\tau $ we obtain the problem \eqref{time-boundary.3final.1} as initial-boundary layer equation.
\item[(v)] Moreover, in the case $ \ell_T\gg L $ we notice in equation \eqref{equilibriumtime5.1} that the leading order $ \phi_0 $ of the radiation intensity solves a stationary equation. The transition from the solution of a time dependent equation, as the one of the original problem, to the solution of a stationary equation happens in times of order $ \eps^{\beta-1} $. Indeed, under a time rescaling $ t=\eps^{\beta-1}\tau=\frac{\tau}{\tau_{\mathit{h}} \eps} $ we obtain the following equation solved by the leading order $ \phi_0 $ in the bulk
\begin{equation}\label{timetostationary}
	\begin{cases}
		\partial_\tau\phi_0(\tau,x,\nu)-\Div\left(\frac{1}{\alpha_\nu^s(x)}\left(\fint_\Ss n\otimes\left(Id-H\right)^{-1}(n)\;dn\right)\nabla_x\phi_0(\tau,x,\nu)\right)=0& x\in\Omega,\;\tau>0\\
		\partial_\tau T(\tau,x)=0& x\in\Omega,\;\tau>0\\
		\phi(0,x,\nu)=\varphi(x,\nu)=\fint_\Ss I_0(x,n,\nu)dn& x\in\Omega\\
		T(0,x)=T_0(x)& x\in\Omega\\		\phi_0(\tau,p,\nu)=\lim\limits_{y\to\infty}\fint_\Ss I_\nu(\tau,y,n,p)& p\in\bnd,\;\tau>0,\\
	\end{cases}
\end{equation}
where $ \varphi(x,\nu) $ is defined by the initial layer equation \eqref{time.layer.3}. This equation can be derived in the same way as the outer problem \eqref{outer.5} taking into account that under this time scale the term containing $ \partial_\tau I_\nu $ is of order $ \eps^{1-\beta}\gg \eps^0 $. Moreover, also the second equation in \eqref{rtetimeps} gives $ \partial_\tau T=0 $ since the absorption emission terms are of order $ \eps^0\ll\eps^{1-\beta} $.
\end{enumerate}
\section{Time dependent diffusion approximation. The case of non-dimen\-sional speed of light scaling as a power law of the Milne length}\label{Sec.7}
In this last section we repeat all the procedures used in Sections \ref{Sec.4}, \ref{Sec.5} and \ref{Sec.6} and we construct the limit problem solved by the solution of the time dependent equation \eqref{rtetimeps} when $ \ell_M=\eps\to 0 $ and in the case in which the speed of light is a power-law of the form $ c=\eps^{-\kappa} $ for $ \kappa>0 $. The strategy is the same as in Section \ref{Sec.6}. It will turn out that the limit problems valid at the interior of the domain $ \Omega $ and for positive times are the same as the one we found in the case of infinite speed of light. On the other hand, differently from the case of infinite speed of light, in this case time layers appears also in regions far from the boundary. Similarly as in Section \ref{Sec.5} and \ref{Sec.6} the boundary layer equations are stationary and are the same equations constructed in Section \ref{Sec.4}. Finally, we will summarize the initial boundary value problems that we have obtained and we will construct the initial-boundary layer equations that we have to consider in order to describe the behavior of the solution for small times in regions close to the boundary. 

\subsection{Outer problems}
We consider equation \eqref{rtetimeps} in the case $ c=\eps^{-\kappa} $, $ \kappa>0 $. In order to find the outer problems solved in the limit we proceed as we did in the previous three sections.
It turns out that the outer problems are the same evolution equations obtaind for the infinite speed of light case. Indeed, under the assumption $ c=\eps^{-\kappa} $ and $ \ell_A=\eps^{-\beta} $, $ \ell_S=\eps^{-\gamma} $ with $ \min\{\alpha,\gamma\}=-1 $ equation \eqref{rtetimeps} becomes
\begin{equation}\label{rtetimepskappa}
	\begin{cases}
		\eps^\kappa\partial_tI_\nu(t,x,n)+\tau_{\mathit{h}} n\cdot \nabla_x I_\nu(t,x,n)=\alpha_\nu^a(x)\eps^\beta \tau_{\mathit{h}}\left(B_\nu(T(t,x))-I_\nu(t,x,n)\right)\\\text{\phantom{nel mezzo del cammin di}}+\alpha_\nu^s(x)\eps^\gamma \tau_{\mathit{h}}\left(\int_\Ss K(n,n')I_\nu(t,x,n')\;dn'-I_\nu(t,x,n)\right)&x\in\Omega,n\in\Ss,t>0\\
		\partial_tT+\eps^\kappa\partial_t \left(\int_0^\infty d\nu\int_\Ss dn \;I_\nu(t,n,x)\right)+\tau_{\mathit{h}}\Div\left(\int_0^\infty d\nu\int_\Ss dn \;nI_\nu(t,n,x)\right)=0&x\in\Omega,n\in\Ss,t>0\\
		I_\nu(0,x,n)=I_0(x,n,\nu)&x\in\Omega,n\in\Ss\\
		T(0,x)=T_0(x)&x\in\Omega\\
		I_\nu(t,n,x)=g_\nu(t,n)&x\in\bnd,n\cdot n_x<0,t>0.		
	\end{cases}
\end{equation}
Then, plugging the usual expansion \eqref{expansion1} for $ I_\nu $ into equation \eqref{rtetimepskappa} and identifying all terms of the same power of $ \eps $ give the same results as in Section \ref{Sec.5}. This is due to the fact that in the first equation of \eqref{rtetimepskappa} the term involving the time derivative of the radiation intensity is of order $ \eps^\kappa$ and hence it is much smaller than $ \eps^0\ll\eps^{-1}\ll \tau_{\mathit{h}}\eps^{-1}  $, i.e. the orders of magnitude which lead to the resulting first two terms in the expansion $ I_\nu(t,x,n)=\phi_0(t,x,\nu)+\eps\phi_1(t,x,n,\nu)+\cdots $. As we noticed in the previous sections, $ \phi_0 $ is isotropic and as long as $ \ell_T\ll L $ it is the Planck distribution $ B_\nu(T) $. Since also in the second equation of \eqref{rtetimepskappa} the leading term containing $ \partial_tT$ is of order $ 1 $, the term $ \eps^\kappa  \partial_t \int_0^\infty d\nu\int_\Ss dn \;I_\nu$ is negligible. The outer problems are hence as in Section \ref{Sec.5} equation \eqref{outerc1} for $ \ell_M=\ell_T\ll\ell_S $, equation \eqref{outerc2} for $ \ell_M=\ell_T=\ell_S $, equation \eqref{outerc3} for $ \ell_M\ll\ell_T\ll L $, the system \eqref{outerc4} for $ \ell_M\ll L=\ell_T $ and the system \eqref{outerc5} for $ \ell_M\ll L\ll\ell_T $.

\subsection{Initial layer equations and boundary layer equations}\label{subs6.2}
In contrast to Section \ref{Sec.5} (i.e. the case $ c=\infty $) besides the formation of boundary layers also time layers appear. The equations describing them can be obtained similarly as in Section \ref{Sec.6}. The first equation in \eqref{rtetimepskappa} has leading order $  \tau_{\mathit{h}}\eps^{-1} $, hence a time rescaling $ t=\frac{\eps^{1+\kappa}}{\tau_{\mathit{h}}} \tau$ gives
\begin{equation*}\label{rtetime3.1}
	\partial_\tau I_\nu=\eps^{\beta+1}\alpha_\nu^a(x)\left(B_\nu(T)-I_\nu\right)+\eps^{\gamma+1}\alpha_\nu^s(x)\left(\int_\Ss K(n,n')I_\nu\;dn'-I_\nu\right)-\eps n\cdot \nabla_x I_\nu
\end{equation*}
and
\begin{equation*}\label{rtetime4.1}
	\eps^{-\kappa}\partial_\tau T+\partial_\tau \left(\int_0^\infty d\nu\int_\Ss dn \;I_\nu\right)+\eps\Div\left(\int_0^\infty d\nu\int_\Ss dn \;nI_\nu\right)=0,
\end{equation*}
which implies $ \partial_\tau T=0 $ at the leading order. Hence, an initial layer of thickness of order $ \frac{\eps^{1+\kappa}}{\tau_{\mathit{h}}}  $ is appearing for any choice of $ \ell_A $ and $ \ell_S $. This is the so called initial Milne layer.
\begin{enumerate}[(i)]
\item If $ \ell_M=\ell_T\ll \ell_S $ the initial Milne layer is described by \begin{equation}\label{initial.1.1+}
	\begin{cases}
		\partial_\tau \varphi_0(\tau,x,n,\nu)=\alpha_\nu^a(x)\left(B_\nu(T_0(x))-\varphi_0(\tau,x,n\nu)\right)& \text{ if } \tau>0\\
		\varphi_0(0,x,n,\nu)=I_0(x,n,\nu).
	\end{cases}	
\end{equation}
Therefore, as $ \tau\to\infty $ we obtain using a simple ODE argument $ \lim\limits_{\tau\to\infty}\varphi_0(\tau,x,n,\nu)= B_\nu(T_0(x)) $.
\item In the case $ \ell_M=\ell_T=\ell_S$ and hence $ \tau_{\mathit{h}}=\frac{1}{\eps} $ with the scaling $ t=\tau \eps^{2+\kappa} $ we obtain on one hand $ \partial_\tau T=0 $ and on the other hand for the first order $ \varphi_0 $ the identity
\begin{multline*}\label{initial.1.2}
	\partial_\tau \varphi_0(\tau,x,n,\nu)=\alpha_\nu^a(x)\left(B_\nu(T_0(x))-\varphi_0(\tau,x,n\nu)\right)\\+\alpha_\nu^s(x)\left(\int_\Ss K(n,n')\varphi_0(\tau,x,n',\nu)\;dn'-\varphi_0(\tau,x,n,\nu)\right).
\end{multline*}
Again, using semigroup theory we can write the solution as $$ \varphi_0=e^{-\alpha_\nu^a(x)\tau} \left(e^{-\alpha_\nu^s\tau(Id-H)}I_0\right)+\left(1-e^{\alpha_\nu^a(x)\tau}\right)B_\nu(T_0) .$$ Hence, we have once more $ \lim\limits_{\tau\to\infty}\varphi_0(\tau,x,n,\nu)= B_\nu(T_0(x)) $.
\item For all cases $ \ell_M\ll\ell_T\ll L $, i.e. $ \ell_S=\eps $ and $ \ell_A=\eps^{-\beta} $ for $ \beta\in(-1,1) $ and $\tau_{\mathit{h}}=\frac{1}{\eps}$, under the scaling $ t=\tau \eps^{2+\kappa} $ we have the initial Milne layer equation \begin{equation}\label{initial.1.3}
	\begin{cases}
		\partial_\tau \varphi_0(\tau,x,n,\nu)=\alpha_\nu^s(x)\left(\int_\Ss K(n,n')\varphi_0(\tau,x,n',\nu)\;dn'-\varphi_0(\tau,x,n,\nu)\right)&\tau>0\\
		\partial_\tau T(\tau,x)=0&\tau>0\\
		\varphi_0(0,x,n,\nu)=I_0(x,n,\nu)\\
		T(0,x)=T_0(x).
	\end{cases}
\end{equation}
This is exactly the same equation as \eqref{time.layer.3}. Thus, an application of spectral theory implies again\\ $ \lim\limits_{\tau\to\infty}\varphi_0(\tau,x,n,\nu)=\varphi(x,\nu)=\fint_\Ss I_0(x,n,\nu)dn $.\\

However, as for the finite speed of light case, there is also a thermalization layer appearing. Indeed, with a time rescaling $ t=\eps^{1-\beta+\kappa}\tau $ the term involving $ \partial_tI_\nu $ becomes of the same order of the emission-absorption term according to
\begin{equation}\label{initial.2.3}
	\begin{cases}
		\partial_\tau I_\nu(\tau,x,n)+\eps^{-\beta}n\cdot \nabla_xI_\nu(\tau,x,n)=\alpha_\nu^a(x)\left(B_\nu(T(x))-I_\nu(\tau,x,n)\right)\\\text{\phantom{nel mezzo del cammin di nostra vita}}+\frac{\alpha_\nu^s(x)}{\eps^{1+\beta}}\left(\int_\Ss K(n,n')I_\nu(\tau,x,n')\;dn'-I_\nu(\tau,x,n)\right)\\
		\frac{1}{\eps^\kappa}\partial_\tau T(\tau,x)+\left(\int_0^\infty d\nu\int_\Ss dn\;\partial_\tau I_\nu(\tau,x,n)\right)+\eps^{-\beta}\Div\left(\int_0^\infty d\nu\int_\Ss dn\;nI_\nu(\tau,x,n)\right)=0.
	\end{cases}
\end{equation}
Hence, as we have seen in \eqref{time.layer.3-1} the leading order $ \varphi_0 $ of $ I_\nu $ in \eqref{initial.2.3} is isotropic, as well as the term of order $ \eps^\beta $ for $ \beta\geq 0 $. Moreover, once more the temperature $ T $ is just the initial temperature $ T_0 (x)$ to the leading order. This yields the initial thermalization layer equation
\begin{equation*}\label{initial.3.1}
	\begin{cases}
		\partial_\tau \varphi_0(\tau,x,\nu)=\alpha_\nu^a(x)\left(B_\nu(T_0(x))-\varphi_0(\tau,x,\nu)\right)&\tau>0\\
		\varphi_0(0,x,\nu)=\varphi(x,\nu) .
	\end{cases}
\end{equation*}
Hence, similarly to \eqref{initial.1.1+} we have $ \lim\limits_{\tau\to\infty}\varphi_0=B_\nu(T_0(x)) $ as $ \tau \to \infty $.
\item[(iv)+(v)] For the cases $ \ell_M\ll L\lesssim\ell_T $ the initial Milne layer equation is obtained again rescaling the time variable by $ t=\frac{\eps^{1+\kappa}}{\tau_{\mathit{h}}} \tau $ and it is given by equation \eqref{initial.1.3}.
\end{enumerate}
\
For the boundary layer equations we argue similarly as in the case $ c=\infty $ and $ c $ bounded. Rescaling the space variable by $ \xi=-\frac{x-p}{\eps^\alpha}\cdot n_p $ for $ \eps^\alpha\in\{\ell_M,\ell_T\} $ and $ p\in\bnd $ equation \eqref{rtetimepskappa} becomes
\begin{equation}\label{time.boundary.2}
	\begin{cases}
		-(n\cdot n_p)\partial_\xi I_\nu(t,\xi,n;p)= \alpha^a_\nu\left(p+\mathcal{O}(\eps^\alpha)\right)\frac{\eps^{\alpha}}{\ell_A}(B_\nu(T)-I_\nu)\\\;\;\;\;\;\;\;\;\;+\frac{\eps^{\alpha}}{\ell_S}\alpha_\nu^s\left(p+\mathcal{O}(\eps^\alpha)\right)\left(\int_\Ss K(n,n')I_\nu\;dn'-I_\nu\right)-\frac{\eps^{\alpha+\kappa}}{\tau_{\mathit{h}}}\partial_t I_\nu(t,\xi,n)+\eps^{2\alpha}\cdots &\xi>0\\
		\partial_t T(t,\xi;p)+\eps^\kappa\left(\int_0^\infty d\nu\int_\Ss dn\partial_t I_\nu\right)+\eps^{-\alpha}\tau_{\mathit{h}}\Div \left(\int_0^\infty d\nu \int_\Ss dn\;nI_\nu\right)=0& \xi>0\\
		I_\nu(0,\xi,n;p)=I_0(p,n,\nu)\\
		T(0,\xi;p)=T_0(p)\\
		I_\nu(t,0,n;p)=g_\nu(t,n)& n\cdot n_p<0.	\end{cases}
\end{equation}
Therefore, the boundary layers are described by the same stationary equation we constructed in Section \ref{Sec.4}. Indeed we obtain for $ \ell_M=\ell_T\ll\ell_S $ the Milne problem \eqref{Milnecase1} and for $ \ell_M=\ell_T=\ell_S\ll L $ the Milne problem \eqref{Milnecase2}. The two boundary layers appearing in the case $ \ell_M\ll \ell_T\ll L $ are described by the Milne problem \eqref{Milnecase3} and by the thermalization equation \eqref{themaleq2.3}. Finally, if $ \ell_M\ll L\lesssim\ell_T $ the Milne problems are given by \eqref{Milnecase3}.  \\

\subsection{Limit problems in the bulk}
We now summarize the PDE problems which are expected to be solved by the solution of \eqref{rtetimeps} when $ c=\eps^{-\kappa} $, $ \kappa>0 $ in the limit $ \ell_M=\eps\to0 $ for any choice of $ \ell_A $ and $ \ell_S $. Matching the solution to the outer problems valid in the bulk for positive times $ t>0 $ with the solution to the initial layer equations and boundary layer equations we obtain as limit equation exactly the same PDE problems in Section \ref{Sec.5}. Indeed, on one hand the boundary layer problems are exactly the Milne and thermalization problems constructed for the stationary problem and valid also for the time dependent problem. On the other hand, in the initial layer equations derived in the previous subsection \ref{subs6.2} the temperature is constant, hence it is $ T=T_0 $, the same result we obtained in the case $ c=\infty $ in Subsection \ref{4.2}. Therefore, since the outer problems coincides in both cases when $ c=\infty $ and $ c=\eps^{-\kappa} $ with $ \kappa>0 $ and $ \eps\to0 $, we conclude as in Section \ref{Sec.5} that the limit PDE problems are given by \eqref{equilibtriumtime1.5} if $ \ell_M=\ell_T\ll \ell_S $, by \eqref{equilibtriumtime2.5} if $ \ell_M=\ell_T=\ell_S $, by \eqref{equilibriumtime3.5} if $ \ell_M\ll \ell_T\ll L $, by \eqref{equilibtriumtimec4} if $ \ell_M\ll L=\ell_T$ and finally by \eqref{equilibtriumtimec5} if $\ell_M\ll L\ll \ell_T $. \\
\subsection{Initial-boundary layers}
As in Sections \ref{Sec.5} and \ref{Sec.6} we will derive the initial-boundary layer equations, which describe the behavior of the solutions for very small times and in regions close to the boundary. The initial-boundary layer equations are obtained rescaling in a suitable way the space and time variables. Considering equation \eqref{time.boundary.2} resulting from the space rescale according to the Milne length or the thermalization length we notice that the term involving the time derivative of the radiation intensity has order $ \frac{\eps^{\alpha+\kappa}}{\tau_{\mathit{h}}} $. Hence, the initial-boundary Milne layer equation is obtained by the rescaling $ y=-\frac{x-p}{\eps}\cdot n_p $ and $ t=\frac{\eps^{1+\kappa}}{\tau_{\mathit{h}}}\tau $ for $ p\in\bnd $. In the case $ \ell_M\ll \ell_T\ll L $ (i.e. when $ \ell_A=\eps^{\beta} $ for $ \beta\in(-1,1) $, $ \ell_S=\eps $ and $ \tau_{\mathit{h}}=\frac{1}{\eps} $) the initial-boundary thermalization equation is obtained rescaling $ \eta=-\frac{x-p}{\ell_T}\cdot n_p $ and $ t=\eps^{1-\beta+\kappa} $, where $ \ell_T=\eps^{-\frac{1-\beta}{2}} $ and $ p\in\bnd $.
\begin{enumerate}[(i)]
	\item If $ \ell_M=\ell_T\ll\ell_S $ rescaling the spatial variable by $ y=-\frac{x-p}{\eps}\cdot n_p $ for $ p\in\bnd $ and the time variable by $ t=\eps^{2+\kappa}\tau $ we see that the initial-boundary layer equation is given by
	\begin{equation*}\label{time-boundary.1final}
		\begin{cases}
			\partial_\tau I_\nu(\tau,y,n;p)-(n\cdot n_p)\partial_y I_\nu(\tau,y,n;p)= \alpha^a_\nu(p)(B_\nu(T_0(p))-I_\nu(\tau,y,n;p))&y>0,\;\tau>0\\		
			I_\nu(0,y,n;p)=I_0(p,n,\nu)&y>0,\;\tau>0\\
			I_\nu(\tau,0,n;p)=g_\nu(0,n)&n\cdot n_p<0,\;\tau>0,	\end{cases}
	\end{equation*}
	where we used that equation
	\begin{equation}\label{important.mixed}
		\begin{cases}
			\frac{1}{\eps^\kappa}\partial_tT(\tau,y)+\partial_\tau\left(\int_0^\infty d\nu \int_\Ss dn\;I_\nu(\tau,y,n;p)\right)=\partial_y \left(\int_0^\infty d\nu \int_\Ss dn\;(n\cdot n_p) I_\nu(\tau,y,n;p)\right)& y>0,\;\tau>0\\
			T(0,y;p)=T_0(p)& y>0,\\
		\end{cases}
	\end{equation}
	implies $ T(\tau,y;p)=T_0(p) $.
	\item If $ \ell_M=\ell_T=\ell_S $ rescaling the variables according to $ y=-\frac{x-p}{\eps}\cdot n_p $ for $ p\in\bnd $ and $ t=\eps^{2+\kappa}\tau $ we obtain the following initial-boundary layer equation
	\begin{equation*}\label{time-boundary.2final}
		\begin{cases}
			\partial_\tau I_\nu(\tau,y,n;p)-(n\cdot n_p)\partial_y I_\nu(\tau,y,n;p)= \alpha^a_\nu(p)(B_\nu(T_0(p))-I_\nu(\tau,y,n;p))\\\;\;\;\;\;\;\;\;\;\;\;\;\;\;\;\;\;\;\;\;\;\;\;\;+\alpha_\nu^s\left(\int_\Ss K(n,n')I_\nu(\tau,y,n';p)\;dn'-I_\nu(\tau,y,n;p)\right)&y>0,\;\tau>0\\
			I_\nu(0,y,n;p)=I_0(p,n,\nu)&y>0\\
			I_\nu(\tau,0,n;p)=g_\nu(0,n)&n\cdot n_p<0,\;\tau>0,	\end{cases}
	\end{equation*}
	where we used \eqref{important.mixed} again.
	\item If $ \ell_M\ll\ell_T\ll L $ there are again two different initial-boundary layers. We consider first the thermalization problem. We hence rescale the space variable according to $ \eta=\frac{x-p}{\eps^{\frac{1-\beta}{2}}}\cdot n_p $ for $ p\in\bnd $ and the time variable according to $ t=\eps^{\kappa+1-\beta}\tau $ and following the same computations as we did in Section \ref{Sec.5} in equation \eqref{timec-boundary.3} and using a similar argument as in \eqref{important.mixed} we obtain the initial-boundary layer equation as
	\begin{equation*}\label{time-boundary.5final}
		\begin{cases}
			\partial_\tau \varphi_0(\tau,\eta,\nu;p)-\frac{1}{\alpha_\nu^s(p)}\left(\fint_\Ss (n\cdot n_p)(Id-H)^{-1}(n)\cdot n_p \;dn\right)\partial_\eta^2\varphi_0(\tau,\eta,\nu;p)\\\text{\phantom{nel mezzo del cammin di nostra vita mi ritro }}=\alpha_\nu^a(p)\left(B_\nu(T_0(p))-\varphi_0(\tau,\eta,\nu;p)\right)& \eta>0,\;\tau>0\\
			\varphi_0(0,\eta,\nu;p)=\varphi(p,\nu)& \eta>0\\
			\varphi_0(\tau,0,\nu;p)=I(0,\nu;p)& p\in\bnd,\;\tau>0	,
		\end{cases}
	\end{equation*}
	where $ I(0,\nu;p)=\lim\limits_{y\to\infty} I_\nu(0,y,n;p) $ for the solution to the Milne problem \eqref{Milnecase3} for the boundary value $ g_\nu(t,n) $ and $ \varphi(p,\nu)=\lim\limits_{\tau\to\infty}\varphi_0(\tau,p,n,\nu) $ for the solution to \eqref{initial.1.3}.\\
	
	As we have seen in Section \ref{Sec.6} there is another initial-boundary value equation which describes the transition from the initial value $ \varphi(x,\nu) $ to the boundary value $ I(0,\nu;p) $. This is obtained rescaling the space variable according to $ y=\frac{x-p}{\eps}\cdot n_p $ for $ p\in\bnd $ and the time variable according to $ t=\eps^{\kappa+2}\tau $. Using $ \eqref{important.mixed} $ we obtain hence 
	\begin{equation*}\label{time-boundary.5final.2}
		\begin{cases}
			\partial_\tau I_\nu(\tau,y,n;p)-(n\cdot n_p)\partial_y I_\nu(\tau,y,n;p)\\\text{\phantom{nel mezzo del cammin di }}=\alpha_\nu^s\left(\int_\Ss K(n,n')I_\nu(\tau,y,n';p)\;dn'-I_\nu(\tau,y,n;p)\right)&y>0,\;\tau>0\\
			I_\nu(0,y,n;p)=I_0(p,n,\nu)&y>0\\
			I_\nu(\tau,0,n;p)=g_\nu(0,n)&n\cdot n_p<0,\;\tau>0.
		\end{cases}
	\end{equation*}
	\item In the case $ \ell_M\ll\ell_T=L $ rescaling $ \eta=\frac{x-p}{\eps}\cdot n_p $ for $ p\in\bnd $ and $ t=\eps^{2+\kappa}\tau $ we obtain also the initial boundary layer equation for this case
	\begin{equation}\label{time-boundary.4final}
		\begin{cases}
			\partial_\tau I_\nu(\tau,y,n;p)-(n\cdot n_p)\partial_y I_\nu(\tau,y,n;p)=\\\text{\phantom{nel mezzo del ca}}+\alpha_\nu^s\left(\int_\Ss K(n,n')I_\nu(\tau,y,n';p)\;dn'-I_\nu(\tau,y,n;p)\right)&y>0,\;\tau>0\\
			I_\nu(0,y,n;p)=I_0(p,n,\nu)&y>0\\
			I_\nu(\tau,0,n;p)=g_\nu(0,n)&n\cdot n_p<0,\;\tau>0.	\end{cases}
	\end{equation}
	Similar to the case where $ \ell_T\gg 1 $ and $ c=1 $ in Section \ref{Sec.6}, we notice that the radiation intensity $ I_\nu $ has a transition from a solution of a time dependent equation, as it was in the original problem \eqref{rtetimeps}, to a solution of a stationary equation, as it is in \eqref{equilibtriumtimec4}. This transition takes place at times of order $ \eps^\kappa $. Indeed, under the time rescaling $ t=\eps^\kappa \tau$ we obtain the following equation for the leading order $ \phi_0 $ of $ I_\nu $ for all $ x\in\Omega $
	\begin{equation}\label{timetostationary1}\hspace{-0.7cm}
		\begin{cases}
			\partial_\tau\phi_0(\tau,x,\nu)-\Div\left(\frac{1}{\alpha_\nu^s(x)}\left(\fint_\Ss n\otimes\left(Id-H\right)^{-1}(n)\;dn\right)\nabla_x\phi_0(\tau,x,\nu)\right)\\\text{\phantom{nel mezzo del cammin di nostra vita mi ritrovai }}=\left(B_\nu\left(T(\tau,x)\right)-\phi_0(\tau,x,\nu)\right)& x\in\Omega,\;\tau>0\\
			\partial_\tau T(\tau,x)=0& x\in\Omega,\;\tau>0\\
			\phi(0,x,\nu)=\varphi(x,\nu)& x\in\Omega\\
			T(0,x)=T_0(x)& x\in\Omega\\		\phi_0(\tau,p,\nu)=\lim\limits_{y\to\infty}\fint_\Ss I_\nu(0,y,n,p)& p\in\bnd,\;\tau>0,\\
		\end{cases}
	\end{equation}
	where $ I_\nu(0,y,n,p) $ solves the Milne problem \eqref{Milnecase3} for the boundary value $ g_\nu(0,n) $ and we used the notation $ \varphi(x,\nu)=\lim\limits_{\tau\to\infty}\varphi_0(\tau,x,n,\nu) $ for the solution to \eqref{initial.1.3}. In order to derive equation \eqref{timetostationary1} we notice that under the time rescale $ t=\eps^\kappa \tau$ the term in the first equation of \eqref{rtetimeps} containing $ \partial_\tau I_\nu $ becomes of order $ \eps^0 $ as the absorption-emission term. This implies the first equation in \eqref{timetostationary1} as we did in Section \ref{Sec.6} for \eqref{outer.4}. On the other hand, in the second equation of \eqref{timetostationary1} the leading term is $ \partial_\tau T $ of order $ \eps^{-\kappa}\gg \eps^0 $.
	\item Finally, if $ \ell_M\ll L\ll \ell_T $ the initial-boundary layer equation is again \eqref{time-boundary.4final}. Also for this last case we notice the leading order $ \phi_0 $ of $ I_\nu $, which solves a time-dependent equation \eqref{rtetimeps}, solves in the limit a stationary equation \eqref{equilibtriumtimec5}. The transition from time-dependent solution to stationary solution takes place at time of order $ \eps^{\beta-1+\kappa} $. Under the time rescale $ t=\eps^{\beta-1+\kappa}\tau $ we derive in the same way as for equation \eqref{timetostationary} the equation solved by $ \phi_0 $ in the bulk describing this transition. It turns out that it is exactly given by \eqref{timetostationary} for the initial condition $ \phi(0,x,\nu)=\varphi(x,\nu) $ given by the solution to \eqref{initial.1.3}.
\end{enumerate}
\section{Concluding remarks}
In this paper we considered the problem of describing the temperature distribution in a body where the heat is transported only by radiation. We considered the case where the mean free path of the radiative process tends to zero, i.e. $ \ell_M\to 0 $.
Therefore, we coupled the radiative transfer equation \eqref{RTE} with the energy balance equation \eqref{div-free} and we studied the diffusion approximation for the time dependent equations \eqref{rtetimeps} and \eqref{rtetimepsc} and the stationary equation \eqref{rtestationaryeps}.

For all different scaling limit regimes using the method of asymptotic expansions we derived the full limit models describing the temperature of the body and the radiation intensity. The resulting models have been classified depending on the form of the radiation intensity at the leading order on the bulk of the domain. The cases where the isotropic leading order of the radiation intensity is given by the Planck distribution for the temperature yield the diffusion equilibrium approximation, while the models in which the radiation intensity is not approximated by the Planck distribution are denoted by diffusion non-equilibrium approximation. Notice that the diffusion approximation is valid only on the bulk of the domain $ \Omega $ where the leading order of the radiation intensity is isotropic. On the other hand, at the boundary layers and at the initial layers the diffusion approximation fails. We also described for each considered case the boundary and initial layers appearing. Moreover, a summary of the available results about the diffusion approximation and the boundary layer problem for similar settings is included. Many of the derived problems in this article have to be still studied. 

For the time dependent problem we studied three different cases. First we analyzed the problem for the speed of light assumed to be $ c=\infty $, i.e. when the transport of radiation can be assumed to be instantaneous. We then considered the case where the speed of light is of order $ 1 $, i.e. when the time used by the light for spanning distances of order $ 1 $ is of the same order of the time needed by the temperature for having meaningful changes. Finally, we studied the case where the speed of light scales as a power law of the Milne length, i.e. $ c=\eps^{-\kappa} $ for $ \kappa>0 $ and $ \ell_M=\eps $.
\begin{appendices}
	\section{Proof of Proposition \ref{prop.constant}}\label{Appendix}
	We prove now Proposition \ref{prop.constant}. To this end we need the following auxiliary Lemma.
	\begin{lemma}\label{ergodicity}
		Let $ K\in C\left(\Ss\times\Ss\right) $, invariant under rotations, non-negative and satisfying $$ \int_\Ss K(n,n')dn=1 .$$ Let $ n,\omega\in\Ss $. Then there exists finitely many $ n_1,\cdots,n_N\in\Ss $ such that
		\begin{equation}\label{ergodicity.eq}
			K(n_{i-1},n_i)>0 \text{ for all }i\in\{1,\cdots,N+1\},
		\end{equation}
		where we defined $ n_0=n $ and $ n_{N+1}=\omega $.
		\begin{proof}
			Since $ K\geq 0 $ but it is not equal zero, there exists a pair $ n',n''\in\Ss $ such that $ K(n',n'')>0 $. Hence, applying the rotation $ \Rot_{n,n'} $ yields the existence of $ n_* $ such that $ K(n,n_*)>0 $. By continuity the set $ B_n=\{\tilde{n}\in\Ss:K(n,\tilde{n})>0\} $ is open. Hence, there exists $ \delta>0 $ and $ n_1\in\Ss $ such that $ B_\delta(n_1)\subset B_n $. We remark that $ \delta>0 $ is independent of the choice of $ n\in\Ss $. Indeed, for any $ n'\in\Ss $ there exists some $ n''\in\Ss $ such that $ B_\delta(n'')\subset B_{n'} $. This is a consequence of the invariance under rotations of $ K $. Indeed, it is not difficult to see that $ \Rot_{n,n'}({B_n})= B_{n'} $ and so $ \Rot_{n,n'}(B_\delta(n_1))=B_\delta(\Rot_{n,n'}(n_1))\subset B_{n'} $. 
			
			Let us consider the set $$ A_n=\{\tilde{n}\in\Ss: \text{ there exists }n_1,\cdots,n_N\in\Ss \text{ such that \eqref{ergodicity.eq} holds for }n_0=n \text{ and }n_{N+1}=\tilde{n}\}. $$ By the previous consideration we know that $ A_n $ is not empty. We claim now that $ B_\delta(n')\subset A_n $ for any $ n'\in A_n $. Indeed, let $ \delta>0 $ as above. Since $ n'\in A_n $, then $ B_{n'} $ is not empty and there exists some $ n_1\in\Ss $ such that $ B_\delta(n_1)\subset B_{n'} $. It is easy to see that $ n_1\in A_n $. Let now $ \tilde{n}\in B_\delta(n') $, then $ \Rot_{n',n_1}(\tilde{n})\in B_\delta(n_1) $. Hence $ K(n',\Rot_{n',n_1}(\tilde{n}))=K(n_1,\tilde{n})>0 $. Since also $K(n',n_1)>0$ we conclude that $ B_\delta(n')\subset A_n $ for all $ n'\in A_n $. Hence, $ A_n $ is open and it is the whole sphere $ \Ss $. Indeed, assume $ A_n \ne \Ss $. Then, since $ A_n $ is open, the boundary $ \partial A_n=\overline{A}_n\setminus A_n $ is not empty. Let $ n^*\in\partial A_n $ and let $ n_0\in A_n $ with $ d(n^*,n_0)<\frac{\delta}{3} $, where $ d(n^*,n_0) $  is the distance on $ \Ss $ between the two points $ n^*,n_0\in\Ss $. Since $ n^*\in \partial A_n $ it is true that
			$$ B_{\frac{\delta}{3}}(n^*)\cap A_n \ne \emptyset \text{ and }  B_{\frac{\delta}{3}}(n^*)\cap A^c_n \ne \emptyset. $$
			On the other hand we know that $ B_\delta(n_0)\subset A_n $ and therefore $$ B_{\frac{\delta}{3}}(n^*)\subset B_{\frac{\delta}{2}}(n^*)\subset B_\delta(n_0)\subset A_n. $$
			This contradiction concludes the proof of Lemma \ref{ergodicity}.
		\end{proof}
	\end{lemma}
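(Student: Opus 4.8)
The plan is to fix $ n\in\Ss $ and to prove that the set of points that can be joined to $ n $ by a finite positive-kernel chain exhausts the whole sphere; since $ \omega $ is arbitrary, this yields the statement. Concretely I would introduce the reachable set
\begin{equation*}
	A_n=\Big\{\tilde n\in\Ss:\ \exists\,n_1,\dots,n_N\in\Ss\text{ with }K(n_{i-1},n_i)>0\ \forall i\in\{1,\dots,N+1\},\ n_0=n,\ n_{N+1}=\tilde n\Big\}
\end{equation*}
and show $ A_n=\Ss $. The argument rests on two ingredients: (i) that each forward cone $ B_m=\{\tilde n\in\Ss:K(m,\tilde n)>0\} $ is nonempty and contains a geodesic ball whose radius $ \delta>0 $ can be chosen independently of the base point $ m $, and (ii) the connectedness of $ \Ss $.

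First I would check that $ B_m\ne\emptyset $ for every $ m\in\Ss $. Since $ K\ge 0 $ and $ \int_\Ss K(n,n')\,dn=1 $, the kernel is not identically zero, so $ K(n',n'')>0 $ for some pair $ (n',n'') $. Applying the swap rotation $ \Rot_{n',m} $, which sends $ n' $ to $ m $, and using the rotational invariance of $ K $ yields $ K(m,\Rot_{n',m}(n''))>0 $, hence $ B_m\ne\emptyset $. Continuity of $ K $ makes $ B_m $ open, so it contains some ball $ B_\delta(m_1) $. The decisive point is that $ \delta $ is uniform in $ m $: for any other $ m' $ the rotation $ \Rot_{m,m'} $ satisfies $ \Rot_{m,m'}(B_m)=B_{m'} $ by invariance, and being an isometry it carries $ B_\delta(m_1) $ onto $ B_\delta(\Rot_{m,m'}(m_1))\subset B_{m'} $.

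With this uniform $ \delta $ I would then prove that $ A_n $ is open. Given $ m'\in A_n $, choose $ m_1 $ with $ B_\delta(m_1)\subset B_{m'} $; in particular $ K(m',m_1)>0 $, so $ m_1\in A_n $ by appending one step to the chain reaching $ m' $. For any $ \tilde n\in B_\delta(m') $ the isometry $ \Rot_{m',m_1} $ sends $ \tilde n $ into $ B_\delta(m_1)\subset B_{m'} $, whence $ K(m',\Rot_{m',m_1}(\tilde n))>0 $; invariance together with $ \Rot_{m',m_1}^2=Id $ rewrites the left-hand side as $ K(m_1,\tilde n) $, so $ K(m_1,\tilde n)>0 $ and $ \tilde n $ can be reached through $ m_1 $. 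Thus $ B_\delta(m')\subset A_n $, and $ A_n $ is open.

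Finally I would invoke connectedness of $ \Ss $. The set $ A_n $ is nonempty, since it contains $ B_n $, and open by the previous step. If $ A_n\ne\Ss $, its topological boundary is nonempty, so I could pick $ n^*\in\partial A_n $ and $ n_0\in A_n $ with $ d(n^*,n_0)<\frac{\delta}{3} $. Because $ B_\delta(n_0)\subset A_n $, the triangle inequality gives $ B_{\frac{\delta}{2}}(n^*)\subset B_\delta(n_0)\subset A_n $, contradicting $ n^*\in\partial A_n $; hence $ A_n=\Ss $. I expect the main obstacle to be precisely the uniformity of the inner radius $ \delta $: it is the rotational invariance of $ K $ that frees $ \delta $ from the base point, and the same uniformity is what simultaneously powers the openness of $ A_n $ and the final no-boundary argument.
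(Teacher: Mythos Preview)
Your proposal is correct and follows essentially the same route as the paper's proof: define the reachable set $A_n$, use rotational invariance to show each forward cone $B_m$ contains a ball of a \emph{uniform} radius $\delta$, exploit the swap rotation $\Rot_{m',m_1}$ (and its involutive property) to show $B_\delta(m')\subset A_n$ whenever $m'\in A_n$, and finish with the boundary/connectedness argument. The only cosmetic difference is that you make the step $K(m',\Rot_{m',m_1}(\tilde n))=K(m_1,\tilde n)$ via $\Rot_{m',m_1}^2=Id$ explicit, which the paper leaves implicit.
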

	
	\begin{proof}[Proof of Proposition \ref{prop.constant}]
		We first show that $ \varphi $ is continuous. Let $ \eps>0 $. By the continuity of the kernel $ K $ there exists some $ \delta>0 $ such that $$ \left|K(n_1,n_1')-K(n_2,n_2')\right|<\frac{\eps}{4\pi\Arrowvert \varphi\Arrowvert_\infty} $$
		for all $ n_1,n_2,n_1',n_2'\in\Ss $ with $ d(n_1,n_2)+d(n_1',n_2')<\delta $. Let hence $ n_1,n_2\in\Ss $ with $ d(n_1,n_2)<\delta $ then it is easy to see that $ \varphi $ is continuous since
		\begin{equation*}
			\begin{split}
				\left|\varphi(n_1)-\varphi(n_2)\right|=&\left|H[\varphi](n_1)-H[\varphi](n_2)\right|\\\leq& \int_\Ss \left|K(n_1,n')-K(n_2,n')\right||\varphi(n')|\;dn'<\eps.
			\end{split}
		\end{equation*}
		
		We move now to the proof of claim $ (ii) $. Let $ M=\max_{n\in\Ss}(\varphi(n)) $. By continuity there exists some $ n_* \in\Ss $ such that $ M=\varphi(n_*) $. We define the set $ A_M=\left\{n\in\Ss:\: \varphi(n)=M\right\} $. Thus, $ A_M $ is not empty and by continuity it is also closed. We claim that $ A_M $ is also open, which implies claim $ (ii) $. Let $ n\in A_M $. Consider $ B_n=\{\tilde{n}\in\Ss:K(n,\tilde{n})>0\} $. Let $ \eps>0 $ and $ B_n^\eps=\{\tilde{n}\in B_n:\varphi(\tilde{n})<M-\eps\} $. We show $\varphi(\tilde{n})=M$ for all $ \tilde{n}\in B_n $. It is easy to see that this is true if $ B_n^\eps=\emptyset $ for all $ \eps>0 $. If not, let $ \eps>0 $ so that $ B_n^\eps\ne\emptyset $. Then
		\begin{equation*}
			\begin{split}
				M=\varphi(n)=\int_{B_n^\eps} K(n,n')\varphi(n')dn'+\int_{\left(B_n^\eps\right)^c} K(n,n')\varphi(n')dn'<M-\eps\int_{B_n^\eps} K(n,n')dn'<M.
			\end{split}
		\end{equation*}
		
		Arguing as in the proof of Lemma \ref{ergodicity} there exists a $ \delta>0 $ such that $ B_\delta(n_0)\subset B_n $ for some $ n_0\in B_n $. Hence, using the same argument, since $ n_0\in A_M $ it is also true that $\varphi(\tilde{n})=M$ for all $ \tilde{n}\in B_{n_0} $. Using the rotation invariance of the kernel analogously as we have done in Lemma \ref{ergodicity} we see that $$ \Rot_{n,n_0}\left(B_\delta(n_0)\right)=B_\delta(n)\subset \Rot_{n,n_0}(B_n)=B_{n_0}\subset A_M . $$
		We have just proved that closed non-empty set $ A_M $ is open and hence it must be the whole sphere $ \Ss $.\\
		
		Finally we prove claim $ (iii) $. To this end we notice that the linear operator $ H $ maps $L^p$-functions to continuous bounded functions. Analogously as in the proof of $ (i) $, this is a direct consequence of the H\"older inequality and the fact that the scattering kernel $ K $ is continuous. Hence, $ (Id-H)_1:L^1(\Ss)\to L^1(\Ss) $ given by $ (Id-H)_1\varphi=\varphi-H[\varphi] $ is a well-defined operator which maps integrable functions to integrable functions. Since $ H[\varphi]\in C(\Ss) $ for any $ \varphi\in L^1(\Ss) $, if $ (Id-H)_1\varphi=0 $ then also $ (ii) $ applies and hence $ \varphi=\text{const} $. This means that the null space of $ (Id-H)_1 $ as an operator acting on $ L^1(\Ss) $ is given by $$\mathcal{N}\left((Id-H)_1\right)=\text{span}\langle 1\rangle=\{f=c:\; c\in\RR\}. $$ It is not difficult to see that the dual operator $ (Id-H)_1^*:L^\infty(\Ss)\to L^\infty(\Ss) $ is exactly given by $ (Id-H) $. Indeed, let $ f\in L^1(\Ss) $ and $ g\in L^\infty(\Ss) $. We compute using the invariance under rotations of the kernel $ K $
		\begin{equation*}
			\begin{split}
				\int_\Ss dn \;g(Id-H)_1[f]=&	\int_\Ss dn \;g(n)f(n)-	\int_\Ss dn\int_\Ss dn'\;K(n,n')g(n)f(n')\\=&\int_\Ss dn\; g(n)f(n)-\int_\Ss dn'\int_\Ss dn\;K(n',n)g(n)f(n')=\int_\Ss dn \;(Id-H)[g]f.
			\end{split}
		\end{equation*}
		Therefore, by the orthogonality of the null-space to the range of the dual operator we conclude
		\begin{equation*}
			\begin{split}
				\text{Ran}(Id-H)=&\left\{\varphi\in L^\infty(\Ss): \int_\Ss \varphi(n)f(n)\;dn=0\; \forall f\in\mathcal{N}\left((Id-H)_1\right)\right\}\\
				=&\left\{\varphi\in L^\infty(\Ss): \int_\Ss \varphi(n)\;dn=0\right\}.
			\end{split}
		\end{equation*}
	\end{proof}
\end{appendices}

\bibliographystyle{siam}
\bibliography{literaturebibliography}
\end{document}